\renewcommand{\le}{\leqslant}
\renewcommand{\ge}{\geqslant}
\newcommand{\ptl}{\partial}
\newcommand{\rr}{{\mathbb{R}}}
\newcommand{\cc}{{\mathbb{C}}}
\newcommand{\hh}{{\mathbb{H}}}
\newcommand{\nn}{{\mathbb{N}}}
\newcommand{\eps}{\varepsilon}
\newcommand{\ga}{\gamma}
\newcommand{\Ga}{\Gamma}
\newcommand{\escpr}[1]{\langle#1\rangle}
\newcommand{\mh}{\mathcal{H}}
\DeclareMathOperator{\spn}{span}
\newtheorem{theorem}{Theorem}[section]
\newtheorem{proposition}[theorem]{Proposition}
\newtheorem{lemma}[theorem]{Lemma}
\newtheorem{corollary}[theorem]{Corollary}
\theoremstyle{definition}
\newtheorem{remark}[theorem]{Remark}
\newtheorem{example}[theorem]{Example}
\newtheorem{definition}[theorem]{Definition} 
\theoremstyle{remark}
\numberwithin{equation}{section}
\begin{document}

\title{Variational formulas for curves of~fixed~degree}

\author[G.~Citti]{Giovanna Citti}
\address{Dipartimento di Matematica, Università di Bologna, Piazza di Porta S. Donato 5, 401
26 Bologna, Italy}
\email{giovanna.citti@unibo.it}

\author[G.~Giovannardi]{Gianmarco Giovannardi}
\address{Dipartimento di Matematica, Università degli Studi di Trento, Via Sommarive 14, `
38123, Povo (Trento), Italy}
\email{g.giovannardi@unitn.it}

\author[M.~Ritor\'e]{Manuel Ritor\'e} 
\address{Departamento de Geometr\'{\i}a y Topolog\'{\i}a \\
Universidad de Granada \\ E--18071 Granada \\ Espa\~na}
\email{ritore@ugr.es}

\date{\today}

\thanks{The authors have been supported by Horizon 2020 Project ref. 777822: GHAIA,  MEC-Feder grant MTM2017-84851-C2-1-P (GG and MR) and PRIN 2015 ”Variational and perturbative aspects of nonlinear differential problems” (GC and GG)}
\subjclass[2000]{53C17, 49Q20}
\keywords{Sub-Riemannian geometry, graded manifolds, admissible variations, regular curves, holonomy map}

\thispagestyle{empty}

\bibliographystyle{abbrv} 
\maketitle

\begin{abstract}
We consider a length functional for $C^1$ curves of fixed degree in graded manifolds equipped with a Riemannian metric. The first variation of this length functional can be computed only if the curve can be deformed in a suitable sense, and this condition is expressed via a differential equation along the curve. In the classical differential geometry setting, the analogous condition was considered by Bryant and Hsu in \cite{MR1240644,MR1189496}, who proved that it is equivalent to the surjectivity of a holonomy map. The purpose of this paper is to extend this deformation theory to curves of fixed degree providing several examples and applications. In particular, we give a useful sufficient condition to guarantee the possibility of deforming a curve.
\end{abstract}

\tableofcontents
\section{Introduction}

\thispagestyle{empty}

We consider in this work equiregular graded manifolds $(N,\mh^1,\ldots,\mh^s)$, where $\mh^1\subset\mh^2\subset\cdots\subset\mh^s=TN$ is a flag of sub-bundles of the tangent bundle satisfying $[\mh^i,\mh^j]\subset \mh^{i+j}$, $i,j\ge 1$. This flag allows to define the degree of a tangent vector. 

Examples of graded manifolds are Carnot manifolds $(N,\mh)$, where $\mh$ is a constant rank distribution satisfying H\"ormander's rank condition: in this case $\mh^1$ coincides with $\mh$ and for every $i$, $\mh^i$ is obtained from $\mh$ via $i$ commutations.

Another example are H\"ormander structures of type II introduced by Rothschild and Stein \cite{RothStein}. These structures are naturally associated to a heat subelliptic equation: $\mh^1$ is a purely spatial H\"ormander distribution, $\partial_t\in \mh^2$ and all the elements of the flag of higher degree are obtained via commutation. In this case it is clear that regularity properties of the solution depend not only on integral curves of vector fields of degree one, but also on integral curves of the vector field $\partial_t$, of degree 2. 
Finally we can consider regular submanifolds $N$ of a sub-Riemannian manifold $M$. In \cite[page 151]{Gromov} Gromov points out that, while the distance of a sub-Riemannian manifold $M$ can be expressed in term of integral curves of vector fields of degree 1  by Chow's Theorem, the same thing is no more true for a submanifold $N$ immersed in $M$, with the induced distance. Only if the new distribution $\mh \cap TN$ verifies a H\"ormander type condition on $N$, there exists a horizontal path tangent to $N$ connecting any two points in $N$. This condition for the distribution $\mh \cap TN$ is not verified even in simple cases. Nevertheless the submanifold $N$ inherits a filtration of its tangent bundle $TN$ by means of the flag of sub-bundles in the ambient space $N$ induced by the distribution $\mh$. Therefore $N$ endowed with this induced flag is a graded manifold and the induced anisotropic distance on the submanifold $N$ can be defined as in Definition 1.1 in the paper by Nagel, Stein and Wainger \cite{NagelSteinWainger}. As Gromov suggests in \cite[page 152]{Gromov}, studying integral curves of vector fields of degree greater than one is a crucial tool in understanding the induced distance on the submanifold $N$, when the distribution $\mh \cap TN$ does not verify a H\"ormander type condition.

In sub-Riemannian geometry, the existence of minimizing curves for the length functional that are not solutions of the geodesic equation was discovered by Montgomery in \cite{Mont94a,Mont94b}. These curves are known as abnormal extremals. The problem of their regularity has been widely considered  in the literature, see for instance \cite{Montgomery, AgrachevSachkov, AgrachevBarilariBoscain, LeonardiMonti, Extremalcurves, Monti, AgracevSarychev,Rifford}. The usual approach to face this problem is by means of the study of the endpoint map. However, in this paper we follow an alternative approach based on the Griffiths formalism as suggested by Bryant and Hsu \cite{MR1240644,MR1189496}.

In this paper we consider a general graded manifold $N$, we call degree of a $C^1$ curve $\ga:I\to N$ as the maximum of the pointwise degree of $\ga'(t)$ at every $t\in I$. For these curves we will introduce the notion of length as follows. If we set a Riemannian metric $g=\escpr{\cdot,\cdot}$ on $N$ we get, for any $p\in N$, an orthogonal decomposition $T_pN=\mathcal{K}_p^1\oplus\ldots\oplus\mathcal{K}_p^s$ and, for any $r>0$, the Riemannian metric $g_r$ making the subspaces $\mathcal{K}_p^i$ orthogonal and such that
\[
g_r\big|_{\mathcal{K}^i}=\frac{1}{r^{i-1}}\,g\big|_{\mathcal{K}^i} \; .
\]
These metrics allow to define the length $L_d$ of a curve $\ga$ of degree $d$ in $N$ by 
\[
L_d(\ga)=\lim_{r\downarrow 0} r^{\tfrac{1}{2}(d-1)} L(\ga,g_r),
\]
where $L(\ga,g_r)$ is the Riemannian length of $\ga$  with respect to the metric $g_r$. We are interested in computing the Euler-Lagrange equations for the length functional $L_d(\ga)$. However, we only consider \emph{admissible} variations: the ones that  preserve the degree of the initial curve $\ga$. Then it turns out that the associated variational vector field $V(t)= \frac{\partial \Gamma_s(t)}{\partial s }\big|_{s=0}$ to an admissible variation  has to verify the first order condition \eqref{System of PDEs for admissible} along $\ga$. We say  that a vector field along $\ga$ is admissible when it verifies the system of ODEs \eqref{System of PDEs for admissible}. In \cite[Theorem 3]{MR1189496} Hsu pointed out that under a \emph{regularity} condition on $\ga$ each admissible vector field along $\ga$ is integrable by an admissible variation.

Roughly speaking a curve $\ga$ is regular if it admits enough compactly supported variations preserving its degree. Indeed, to integrate the vector field $V(t)$ we follow the exponential map generating the non-admissible compactly supported variation $\Gamma_s(t)=\exp_{\ga(t)}(s V(t))$ of the initial curve $\ga$. Let $\text{supp}(V)\subset [a,b]$. By the Implicit Function Theorem  there exists a vector field $Y(s,t)$ along $\ga$ vanishing at $a$ such that the perturbations  $\tilde{\Gamma}_s(t)=\exp_{\ga(t)}(s V(t)+ Y(s,t))$ of $\Gamma$ are curves of the same degree of $\ga$ for each $s$ small enough. In general $\tilde{\Gamma}$ fixes the endpoint at $\ga(a)$ but moves the endpoint at $\ga(b)$. Finally the regularity condition on $\ga$ allows us to produce the admissible variation that fixes the endpoint $\gamma(b)$ and integrate $V$.

This concept of regularity deals with the controllability (see \cite[Chapter 13]{Brockett}) of the system of ODEs \eqref{System of PDEs for admissible}. Indeed after splitting the admissible vector $V$ along $\ga$ in its horizontal $V_h=\sum_{i=1}^k g_i X_i$ and vertical $V_v=\sum_{j=k+1}^n f_j X_j $ part the admissibility system of ODEs  \eqref{System of PDEs for admissible} is equivalent to 
\begin{equation}
\label{eq:intro:adm}
F'(t)+B(t)F(t)+A(t)G(t)=0,
\end{equation}
where $A(t), B(t)$ are defined in \eqref{def:AB} and $(X_i)$ is a global orthonormal adapted basis along $\ga$. 
We control this linear system with initial condition $F(a)=0$ on a compact interval $[a,b]\subset I$ when for each value $y_0 \in \rr^{n-k}$ there exists a control horizontal vector field $G(t) \in C_0^{r-1}((a,b), \rr^k)$ such that $F(t)$ solves \eqref{eq:intro:adm} and $F(b)=y_0$. In other words if the \emph{holonomy} map
\[
H_\gamma^{a,b}:\mathcal{H}^{r-1}_0((a,b))\to \mathcal{V}_{\ga(b)}, \qquad  H_{\ga}^{a,b}(G):=F(b)
\]
is surjective the system \eqref{eq:intro:adm} is controllable. Therefore a curve $\ga$ is said to be regular restricted to $[a,b]$ when the holonomy map is surjective. It turns out that  there exists a regular matrix $D(t)$ along $\ga$ solving the differential equation $D'=DB$ such that the image of the holonomy map is given by
\[
H_{\ga}^{a,b}(G)= - D(b)^{-1} \int_a^b D(t) A(t) G(t) dt.
\]
In Corollary 5 in \cite{MR1189496} (Proposition~\ref{prop:linearlyfull}) Hsu proved that the regularity condition on $\ga$ is equivalent to maximal rank condition on the matrix $\tilde{A}(t)= D(t)A(t)$ along $\ga$. Furthermore all singular curves are characterized by the existence of a non-vanishing row vector $\Lambda(t)$ along $\ga$ solving 
\begin{equation}
 \begin{cases}
 \Lambda'(t)=\Lambda(t) B(t)\\
 \Lambda(t) A(t)=0.
 \end{cases}
\end{equation}
Moreover, we check that the surjectivity of the holonomy map is independent of the choice of Riemannian metric $g=\escpr{\cdot,\cdot}$ on the tangent bundle $TN$, thus the regularity of a curve $\ga$ is an invariant  of the graded structure $(N,\mh^1,\ldots,\mh^s)$.

On the other hand the minimizing paths for the length functional $L_d$ of fixed degree clearly strongly depend on the Riemannian metric $g$. Regarding only regular curves of fixed degree we deduce the Euler-Lagrange equation for the critical points of $L_d$ in Theroem~\ref{thm:geod}. However there are singular curves that are not solution of the geodesic equation but they are minima for $L_d$. When this singular horizontal curves are minima of the sub-Riemannian length $L$, they are known as abnormal extremals.

The paper is organized as follows. In Section~\ref{sc:prel} we shortly recall the definitions of graded manifolds and Carnot manifolds, the degree of an immersed curve and the length functional $L_d$ for curves of degree less than or equal to $d$. In Section~\ref{sc:admvariations} we deal with admissible variations for curves of degree $d$ and we deduce the system of ODEs for admissible vector fields. In Section~\ref{sc:structure} the invariances of this system are studied. Section~\ref{sc:holonomy} is completely devoted to description of the holonomy map and characterization of regular and singular curves. Here explicit examples of singular curves of degree greater that one are showed. 
In Section~\ref{sc:weakint} we give in Definition~\ref{def:stronglyregular} a weaker pointwise sufficient condition to ensure the regularity of a curve of degree $d$. This condition does not require solving a differential equation but still ensures the regularity of the curve, see Theorem~\ref{thm:intcriterion2}. This condition \emph{can be easily generalized to submanifolds} of given degree, providing a condition for the regularity of submanifolds in a Carnot manifold \cite{CGR0}. Section~\ref{sc:fvf} is dedicated to the first variation formula for the length functional $L_d$. Some applications to the computation of geodesics in graded manifolds are provided. Finally, in Appendix~\ref{sec:integrability of admissible vector} we provide a different proof of Theorem 3 in  \cite{MR1189496} using the Implicit Function Theorem in Banach spaces  and in Appendix~\ref{ap:holonomyL2} we extend the holonomy map to curves with square integrable derivatives. We include these sections for reader's convenience.

\section{Preliminaries}
\label{sc:prel}

Let $N$ be an $n$-dimensional smooth manifold. Given two smooth vector fields $X,Y$ on $N$, their \emph{commutator} or \emph{Lie bracket} is defined by $[X,Y]:=XY-YX$.  An \emph{increasing filtration} $(\mh^i)_{i\in \nn}$ of  the tangent bundle $TN$ is a flag of sub-bundles
\begin{equation}
\mathcal{H}^1\subset\mathcal{H}^2\subset\cdots\subset \mathcal{H}^i\subset\cdots\subseteq TN,
\label{manifold flag}
\end{equation}
such that
\begin{enumerate}
\label{def:incfilt}
\item[(i)] $ \cup_{i \in \nn} \mh^i= TN$ 
\item[(ii)]
 $ [\mathcal{H}^{i},\mathcal{H}^{j}] \subseteq \mathcal{H}^{i+j},$ for $ i,j \ge1$, 
\end{enumerate}
where  
$ [\mathcal{H}^i,\mathcal{H}^j]:=\spn\{[X,Y]  : X \in \mathcal{H}^i,Y \in \mathcal{H}^j\}$.
Moreover, we say that an increasing filtration is \emph{locally finite} when
\begin{enumerate}
\item[(iii)]  for each $p \in N$ there exists an integer  $s=s(p)$, the \emph{step} at $p$, satisfying $\mathcal{H}^s_p=T_p N$. Then  we have the following flag of subspaces
\begin{equation}
 \mathcal{H}^1_p\subset\mathcal{H}^2_p\subset\cdots\subset \mathcal{H}^s_p=T_p N.
 \label{flag at each point}
\end{equation}
\end{enumerate}

A \textit{graded manifold} $(N,(\mh^i))$ is a smooth manifold $N$ endowed
with a locally finite increasing filtration, namely  a flag of sub-bundles \eqref{manifold flag} satisfying (i),(ii) and (iii). For the sake of brevity a locally finite increasing filtration will be simply called a filtration.
Setting $n_i(p):=\dim {\mathcal{H}}^i_p $, the integer list $(n_1(p),\cdots,n_s(p))$  is called the \textit{growth vector} of the filtration \eqref{manifold flag} at $p$. When the growth vector is constant in a neighborhood of a point $p \in N$ we say that $p$ is a \textit{regular point} for the filtration. We say that a filtration $(\mathcal{H}^i)$ on a manifold $N$ is \textit{equiregular} if the growth vector is constant in $N$. From now on we suppose that $N$ is an equiregular graded manifold.

Given a vector $v$ in $T_p N$ we say that the \textit{degree} of $v$ is equal to $\ell$ if $v\in\mathcal{H}_p^\ell$ and $v \notin \mathcal{H}_p^{\ell-1}$. In this case we write $\text{deg}(v)=\ell$. The degree of a vector field is defined pointwise and can take different values at different points.

Let $(N,(\mh^1,\ldots, \mh^s))$ be an equiregular graded manifold. Take $p\in N$ and consider an open neighborhood $U$ of $p$  where a local frame $\{X_{1},\cdots,X_{n_1}\}$  generating  $\mathcal{H}^1$ is defined. Clearly the degree of $X_j$, for $j=1,\ldots,n_1$, is equal to one since the vector fields $X_1,\ldots,X_{n_1}$ belong to $\mh^1$. Moreover the vector fields $X_1, \ldots,X_{n_1}$ also lie in $\mh^2$. Setting $n_0:=0$, we add some vector fields $X_{n_{1}+1},\cdots,X_{n_2} \in \mathcal{H}^2\setminus \mathcal{H}^{1} $ so that $(X_1)_p,\ldots,(X_{n_2})_p$ generate $\mathcal{H}^2_p$. Reducing $U$ if necessary we have that $X_1,\ldots,X_{n_2}$ generate $\mathcal{H}^2$ in $U$.  Iterating this procedure we obtain a basis of $TM$ in a neighborhood of $p$
\begin{equation}
\label{local adapted basis to the bundle}
 (X_1,\ldots,X_{n_1},X_{n_1+1},\ldots,X_{n_2},\ldots,X_{n_{s-1}+1}, \ldots,X_n),
\end{equation}
such that the vector fields $X_{n_{i-1}+1},\ldots,X_{n_i}$ have degree equal to $i$. The basis obtained in (\ref{local adapted basis to the bundle}) is called an \textit{adapted basis} to the filtration $(\mh^1,\ldots,\mh^s)$.

\subsection{Degree of a curve}
Let $I$ be a non-trivial interval, and $\ga:I \to N$ a curve of class $C^1$ immersed in an equiregular graded manifold $(N,\mh^1,\ldots,\mh^s)$. Then, following \cite{DonneMagnani, vittonemagnani}, we define the degree of $\ga$ at a point $t\in I$ by 
\[
\deg_{\ga}(t):=\deg(\ga'(t)).
\]
The \textit{degree} $\deg(\ga)$ of a curve $\ga$ is the positive integer 
\[
\deg(\ga):=\max_{t \in I} \deg_{\ga}(t).
\]
We define the \textit{singular set} of $\ga$ by  $\ga_0=\ga(I_0)$ where
\begin{equation}
\label{singular set}
 {I}_0=\{t \in I : \deg_{\ga}(t)<\deg(\ga) \}.
\end{equation}

\begin{proposition}
\label{semicont of the degree}
 Let $\ga: I \to N$ be a $C^1$ immersed curve  in a graded manifold $(N,(\mh^i))$. Let $t_0 \in I$. Assume that $\ga(t_0)$ is a regular point of the filtration. Then we have 
 \[
  \liminf\limits_{t\rightarrow t_0} \deg_{\ga}(t)\geqslant \deg_{\ga}(t_0).
 \]
\end{proposition}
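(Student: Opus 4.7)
Set $\ell:=\deg_\gamma(t_0)$, so that $\gamma'(t_0)\in\mh^\ell_{\gamma(t_0)}\setminus\mh^{\ell-1}_{\gamma(t_0)}$. The plan is to exploit the regularity assumption to obtain an \emph{adapted basis on a whole neighbourhood} of $\gamma(t_0)$, express $\gamma'(t)$ in that basis with continuous coefficients, and read off lower semicontinuity from the nonvanishing of a single coefficient at $t_0$.

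First I would pick an open neighbourhood $U$ of $\gamma(t_0)$ on which the growth vector is constant (this is where the hypothesis that $\gamma(t_0)$ is a regular point of the filtration is used — without it each $\mh^i$ might fail to be a sub-bundle of constant rank near $\gamma(t_0)$, and the argument collapses). On $U$ I would construct, as in~\eqref{local adapted basis to the bundle}, a smooth adapted frame $X_1,\dots,X_n$ such that $X_1,\dots,X_{n_i}$ spans $\mh^i$ on $U$ for every $i=1,\dots,s$. By shrinking $I$ around $t_0$ I may assume $\gamma(I)\subset U$ and write
\[
\gamma'(t)=\sum_{j=1}^{n} a_j(t)\,X_j(\gamma(t)),
\]
where the $a_j$ are continuous since $\gamma\in C^1$ and the frame is smooth.

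Next I would translate the assumption $\deg_\gamma(t_0)=\ell$ into the language of this decomposition. Uniqueness of the expansion in the basis $\{X_j(\gamma(t_0))\}$ together with $\gamma'(t_0)\in\mh^\ell_{\gamma(t_0)}$ forces $a_j(t_0)=0$ for $j>n_\ell$, while $\gamma'(t_0)\notin\mh^{\ell-1}_{\gamma(t_0)}$ forces the existence of some index $j_0$ with $n_{\ell-1}<j_0\le n_\ell$ and $a_{j_0}(t_0)\neq 0$.

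Finally, by continuity of $a_{j_0}$ there is a neighbourhood $J$ of $t_0$ in $I$ on which $a_{j_0}(t)\neq 0$. On $J$ the vector $\gamma'(t)$ therefore has a nonzero component along $X_{j_0}(\gamma(t))$, which belongs to $\mh^\ell\setminus\mh^{\ell-1}$; since $X_1,\dots,X_{n_{\ell-1}}$ span $\mh^{\ell-1}$ on $U$, uniqueness of coefficients again gives $\gamma'(t)\notin\mh^{\ell-1}_{\gamma(t)}$ and hence $\deg_\gamma(t)\ge\ell$ on $J$. Taking $\liminf$ as $t\to t_0$ yields the claim.

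The only genuine subtlety — what I would flag as the main point rather than a real obstacle — is the role of the equiregularity/regular-point hypothesis: it is exactly what guarantees that $\mh^{\ell-1}$ and $\mh^\ell$ are sub-bundles of constant rank in a neighbourhood of $\gamma(t_0)$, so that ``having nonzero $X_{j_0}$-component'' is a well-defined and continuous condition along $\gamma$; the rest is just continuity of the coefficient functions $a_j$.
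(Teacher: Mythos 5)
Your proposal is correct and follows essentially the same route as the paper's proof: choose a local adapted frame near the regular point $\gamma(t_0)$, expand $\gamma'(t)$ with continuous coefficients, and use the nonvanishing of one coefficient of degree $\ell=\deg_\gamma(t_0)$ together with continuity to conclude $\deg_\gamma(t)\ge\ell$ near $t_0$. Your explicit remark on why regularity of the point is needed (constancy of the growth vector so that the adapted frame exists on a whole neighbourhood) is exactly the role it plays in the paper's argument as well.
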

\begin{proof} 
As $p_0=\ga(t_0)\in N$ is regular, there exists a local adapted basis $(X_1,\ldots,X_n)$ in an open neighborhood $U$ of $p_0$. We express the continuous tangent vector  $\ga'$ in  $U$ as
 \begin{equation}
  \ga'(t)=\sum_{i=1}^{s} \sum_{j=n_{i-1}+1}^{n_{i}} h_{ij}(t) (X_j)_{\ga(t)}
  \label{vector written in basis}
\end{equation}
with respect to an  adapted basis $(X_1,\cdots, X_n)$, where $h_{ij}$ are continuous functions on $I$.
Suppose that the degree $\deg(\ga'(t_0))$ is equal to $d \in \nn$. Then, 
there exists an integer $k \in \{ n_{d-1}+1,\cdots,n_{d}\}$  such that
$h_{dk}(t_0)\ne 0$ and $h_{ij}(t_0)=0$ for all $i=d+1,\cdots,s$ and $j=n_{i-1}+1,\cdots,n_{i}$. 
By continuity, there exists an open neighborhood $I' \subset I$ of $t_0$ such that $h_{dk}(t)\ne 0$
for each $t$ in $I'$. Therefore for each $t$ in $I'$ the degree of $\ga'(t)$ is greater than or equal to the degree of $\ga'(t_0)$,
\[
 \deg(\ga'(t))\geqslant \deg(\ga'(t_0))=d.
\]
Taking limits we get
\[
\liminf \limits_{t\rightarrow t_0}\deg_{\ga}(t)\geqslant \deg_{\ga}(t_0). \qedhere
\]
\end{proof}

\begin{corollary}
\label{lsc cor}
Let $\ga: I \to N$ be a $C^1$  immersed curve in an equiregular graded manifold $(N,\mh^1,\ldots,\mh^s)$. Then 
\begin{enumerate}
\item $\deg_{\ga}$ is a lower semicontinuous function on $I$.
\item The singular set $I_0$ defined in \eqref{singular set} is closed in $I$.
\end{enumerate} 
\end{corollary}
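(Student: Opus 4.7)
The plan is to observe that both parts follow almost immediately from Proposition~\ref{semicont of the degree}, once we notice that in an equiregular graded manifold \emph{every} point of $N$ is regular for the filtration, since the growth vector is constant on $N$ by definition of equiregularity.

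For (1), I would simply apply Proposition~\ref{semicont of the degree} at an arbitrary point $t_0\in I$: since $\ga(t_0)$ is a regular point of the filtration, the proposition yields
\[
\liminf_{t\to t_0}\deg_{\ga}(t)\ge \deg_{\ga}(t_0),
\]
which is precisely the definition of lower semicontinuity of $\deg_{\ga}$ at $t_0$. Since $t_0$ was arbitrary, $\deg_{\ga}$ is lower semicontinuous on $I$.

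For (2), I would use that $\deg_{\ga}$ takes values in the finite set $\{1,\ldots,s\}$. Setting $d=\deg(\ga)$, the singular set can be written as the sublevel set
\[
I_0=\{t\in I : \deg_{\ga}(t)\le d-1\},
\]
which is closed because lower semicontinuous functions have closed sublevel sets. Equivalently, one can argue directly that $I\setminus I_0=\{t\in I : \deg_{\ga}(t)=d\}$ is open: at any such $t_0$, Proposition~\ref{semicont of the degree} provides a neighborhood of $t_0$ on which $\deg_{\ga}\ge d$, and since $d$ is by definition the maximal value of $\deg_{\ga}$, this forces equality on that neighborhood.

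There is no real obstacle here; this is a genuine corollary rather than a theorem in disguise. The only point that merits a line of justification is the transition from the pointwise regularity hypothesis of Proposition~\ref{semicont of the degree} to the global equiregularity hypothesis of the corollary, which is immediate from the definitions recalled in Section~\ref{sc:prel}.
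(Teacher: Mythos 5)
Your proposal is correct and follows essentially the same route as the paper: part (1) is Proposition~\ref{semicont of the degree} applied at every point (using that all points of an equiregular graded manifold are regular for the filtration), and part (2) is the standard openness argument for the complement of $I_0$, which is exactly what the paper does. Your alternative phrasing via closed sublevel sets of a lower semicontinuous integer-valued function is an equivalent restatement, not a different method.
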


\begin{proof}
The first assertion follows from Proposition~\ref{semicont of the degree} since every  point in an equiregular graded manifold is regular. To prove 2, we take $\bar{t} \in I\smallsetminus I_0$. By 1, there exists a open neighborhood $I'$ of $\bar{t}$ in $I$ such that each point $t$ in $I'$ has degree $\deg_{\ga}(t)$ equal to $\deg(\ga)$. Therefore we have $I'\subset I\smallsetminus I_0$ and hence $I\smallsetminus I_0$ is an open set.  
\end{proof}

\subsection{Carnot manifolds}
Let $N$ be an $n$-dimensional smooth manifold. An \emph{$l$-dimensional distribution} $\mathcal{H}$ on $N$ assigns smoothly to every $p\in N$ an $l$-dimensional vector subspace $\mathcal{H}_p$ of $T_pN$. We say that a distribution $\mathcal{H}$ complies \emph{H\"ormander's condition} if  any local frame $\{X_1, \ldots, X_l\}$ spanning $\mathcal{H}$ satisfies
\[
\dim(\mathcal{L}(X_1,\ldots,X_{l}))(p)=n, \quad \text{for all} \ p\in N,
\]
where $\mathcal{L}(X_{1},\ldots,X_{l})$ is the linear span of the vector fields  $X_{1},\ldots,X_{l}$ and their  commutators of any order.

A \emph{Carnot manifold} $(N,\mathcal{H})$ is a smooth manifold $N$ endowed with an $l$-dimensional distribution $\mathcal{H}$ satisfying H\"ormander's condition. We refer to $\mathcal{H}$ as the \emph{horizontal distribution}. We say that a vector field on $N$ is \emph{horizontal} if it is tangent to the horizontal distribution at every point. A $C^1$ path is horizontal if the tangent vector is everywhere tangent to the horizontal distribution. A \emph{sub-Riemannian manifold} $(N,\mathcal{H},h)$ is a Carnot manifold $(N,\mathcal{H})$ endowed with a positive-definite inner product $h$ on $\mathcal{H}$. Such an inner product can always be extended to a Riemannian metric on $N$. Alternatively, any Riemannian metric on $N$ restricted to $\mathcal{H}$ provides a structure of sub-Riemannian manifold. Chow's Theorem assures that in a Carnot manifold $(N,\mathcal{H})$ the set of points that can be connected to a given point $p\in N$ by a horizontal path is the connected component of $N$ containing $p$, see \cite{Montgomery}.
Given a Carnot manifold $(N,\mathcal{H})$, we have a flag of subbundles
\begin{equation}
\mathcal{H}^1:=\mathcal{H}\subset\mathcal{H}^2\subset\cdots\subset \mathcal{H}^i\subset\cdots\subset TN,
\label{eq:c manifold flag}
\end{equation}
defined by 
\begin{equation*}
\mathcal{H}^{i+1} :=\mathcal{H}^i + [\mathcal{H},\mathcal{H}^i], \qquad i\ge 1,
\end{equation*}
where
\begin{equation*}
 [\mathcal{H},\mathcal{H}^i]:=\spn\{[X,Y]  : X \in \mathcal{H},Y \in \mathcal{H}^i\}.
\end{equation*}
The smallest integer $s$ satisfying $\mathcal{H}^s_p=T_pN$ is called the \textit{step} of the distribution $\mathcal{H}$ at the point $p$. Therefore, we have 
\begin{equation}
 \mathcal{H}_p\subset\mathcal{H}^2_p\subset\cdots\subset \mathcal{H}^s_p=T_p N.
 \label{flag at each point}
\end{equation}
 The integer list $(n_1(p),\cdots,n_s(p))$  is called the\textit{ growth vector} of $\mathcal{H}$ at $p$. When the growth vector is constant in a neighborhood of a point $p \in N$ we say that $p$ is a \textit{regular point} for the distribution. We say that a distribution $\mathcal{H}$ on a manifold $N$ is \textit{equiregular}
if the growth vector is constant in $N$. This flag of sub-bundles \eqref{eq:c manifold flag} associated to a Carnot manifold $(N,\mh)$ gives rise to the graded structure $(N,(\mh^i))$. Clearly an equiregular Carnot manifold $(N,\mh)$ of step $s$ is a equiregular graded manifold $(N,\mh^1, \ldots, \mh^s)$.

Given a connected sub-Riemannian manifold $(N,\mathcal{H},h)$, and a $C^1$ horizontal path $\gamma:[a,b]\to N$, we define the length of $\gamma$ by
\begin{equation}
L(\gamma)=\int_a^b \ \sqrt{h(\dot{\gamma}(t),\dot{\gamma}(t))} \ dt.
 \label{length fun}
\end{equation}
By means of the equality
\begin{equation}
 d_c(p,q):=\inf \{L(\gamma) :  \gamma\  \text{is a } C^1\ \text{horizontal path joining } p,q  \in N \},
 \label{C-C distance}
\end{equation}
this length defines a distance function (see \cite[\S~2.1.1,\S~2.1.2]{BuragoYuri}) usually called the \textit{Carnot-Carathéodory distance}, or CC-\emph{distance} for short. See \cite[Chapter 1.4]{Montgomery} for further details.

\subsection{Submanifolds immersed in Carnot manifolds}
\label{sc:submanifold}
Let $N$ be a submanifold immersed in an equiregular Carnot manifold $(M,\mh)$ of step $s$. The intersection subspace $\tilde{\mh}_p:=\mh_p \cap T_p N$ at each point $p \in N$ generates a distribution $\tilde{\mh}$ on $N$. Since a priori the distribution $\tilde{\mh}$ does not satisfy H\"ormander's condition, the structure $(N,\tilde{\mh})$ is not a Carnot manifold. Nevertheless, setting $\tilde{\mh}^i:=TN \cap \mh^i $, the submanifold $N$ inherits a locally finite increasing filtration $\tilde{\mh}^1\subset \ldots \subset \tilde{\mh}^s=TN$, that at each point in $N$ is given by
\begin{equation}
\label{tangent flag of M}
 \tilde{\mathcal{H}}_p^1 \subset \tilde{\mathcal{H}}_p^2 \subset\cdots\subset \tilde{\mathcal{H}}_p^s =T_pN,
\end{equation}
where $\tilde{\mathcal{H}}_p^j=T_pN \cap \mathcal{H}_p^j $ and 
$\tilde{n}_j (p)=\text{dim}(\tilde{\mathcal{H}}_p^j)$. Evidently, (i) defined at the beginning of Section~\ref{def:incfilt} is satisfied. On the other hand, if $X\in\tilde{\mathcal{H}}^i$ and $Y\in\tilde{\mh}^j$, we can extend both vector fields in a neighborhood of $M$ so that the extensions $X_1$, $Y_1$ lie in $\mh^i$ and $\mh^j$, respectively. Then $[X,Y]$ is a tangent vector to $N$ that coincides on $M$ with $[X_1,Y_1]\in \mh^{i+j}$. Hence $[X,Y]\in\tilde{\mh}^{i+j}$. This implies condition (ii) defined at the beginning of Section~\ref{def:incfilt}. 
In \cite[0.6.B]{Gromov} Gromov defines the degree at $p$ by
\begin{equation*}
 \deg_{N} (p)= \sum_{j=1}^s j (\tilde{n}_j (p)- \tilde{n}_{j-1} (p)),
\end{equation*}
setting $\tilde{n}_{0}=0$. The degree $\deg(N)$ of the submanifold $N$ is given by
\[
\deg(N):= \max_{p \in N}  \deg_{N} (p).
\]
We define the \textit{characteristic set} of a submanifold $N$ by 
\begin{equation}
\label{singular set}
 N_0=\{p \in N : \deg_{N}(p)<\deg(N) \}.
\end{equation}
In \cite{CGR0} we proved the set $N\smallsetminus N_0$ is open and the growth vector $(\tilde{n}_1,\ldots, \tilde{n}_s)$ is constant on connected components of $N\smallsetminus N_0$ , then we obtain that $(N\smallsetminus N_0, \tilde{\mh}^1, \ldots,\tilde{\mh}^s)$ is a equiregular graded manifold.

\subsection{Length of a generic curve}
In this section we shall consider an equiregular graded manifold $(N,\mh^1, \ldots, \mh^s)$ endowed with a Riemannian metric $g$. We recall the following construction from \cite[1.4.D]{Gromov}: given $p\in N$, we recursively define the subspaces $\mathcal{K}^1_p:=\mathcal{H}^1_p$, $\mathcal{K}^{i+1}_p:=(\mathcal{H}_p^i)^\perp\cap \mathcal{H}^{i+1}_p$, for $1\le i\le (s-1)$. Here $\perp$ means perpendicular with respect to the Riemannian metric $g$. Therefore we have the decomposition of $T_pN$ into orthogonal subspaces
\begin{equation}
\label{eq:decomp}
T_pN=\mathcal{K}_p^1\oplus \mathcal{K}_p^2\oplus\cdots \oplus\mathcal{K}_p^s.
\end{equation}
Given $r>0$, a unique Riemannian metric $g_r$ is defined under the conditions: (i) the subspaces $\mathcal{K}_i$ are orthogonal, and (ii)
\begin{equation}
\label{metric blow-up} 
g_r|_{\mathcal{K}_i}=\frac{1}{r^{i-1}}g|_{\mathcal{K}_i}, \qquad i=1,\ldots,s.
\end{equation}
It is well-known that when $(N,\mh)$ is a Carnot manifold the Riemannian distances of $(N,g_r)$ uniformly converge to the Carnot-Carathéodory distance of $(N,\mathcal{H},h)$, where $h:=g_{|\mh}$ (see \cite[p.~144]{Gromov}).

Working on a neighborhood $U$ of $p$ we construct  an \emph{orthonormal} adapted basis $(X_1,\ldots,X_n)$ for the Riemannian metric $g$ by choosing orthonormal bases in the orthogonal subspaces $\mathcal{K}^i$, $1\le i\le s$. Let $\ga:I \to N$ be an immersed curve in an equiregular graded manifold $(N,\mh^1,\ldots,\mh^s)$ equipped with the Riemannian metric $g$. By the length formula we get
\begin{equation}
\label{eq:sraintegral}
L(\ga, J , g_r)
=\int_{{J}} \left|\ga'(t) \right|_{g_r} dt,
\end{equation}
where $J\subset I$ is a bounded measurable set on $I$ and $L(\ga, J , g_r)$ is the length of $\ga(J)$ with respect to the Riemannian metric $g_r$. If we set $d=\deg(\ga)$ then we have 
\[
\ga'(t)=   \sum_{j=1}^n h_{j}(t) (X_j)_{\ga(t)},
\]
where $h_j(t)=\escpr{\ga'(t),(X_j)_{\ga(t)}}$, setting $g(\cdot,\cdot)=\escpr{\cdot, \cdot}$. Then it follows 
\[
|\ga'(t)|_{g_r}=\sqrt{ \sum_{j=1}^n r^{-(\deg(X_j)-1)} h_{j}(t)^2}.
\]
By Lebesgue's dominated convergence theorem we obtain 
\begin{equation}
\lim_{r\downarrow 0} \Big(r^{\tfrac{1}{2}(d-1)} L(\ga, J , g_r)\Big)=\int_{J} 
\sqrt{\sum_{j=n_{d-1}+1}^{n_{d}} h_j(t)^2}  \, dt.
\label{eq:integral_formula_Ad}
\end{equation}

\begin{definition}
\label{def:Ld}
If $\ga:I \to N$ is an immersed curve of degree $d$ in a graded manifold $(N,\mathcal{H})$ endowed with a Riemannian metric $g$, the length $L_d$ of degree $d$ is defined by
\[
L_d(\ga,J):=\lim_{r\downarrow 0} \Big(r^{\tfrac{1}{2}(d-1)}L(\ga, J , g_r)\Big),
\]
for any bounded measurable set $J\subset I$.
\end{definition}
Equation \eqref{eq:integral_formula_Ad} provides the integral formula $L_d(\ga,J)=\int_J\theta_d(t)dt$, where
\begin{equation}
\label{eq:thetad}
\theta_d(t)=\bigg( \sum_{j=n_{d-1}+1}^{n_d} \escpr{\ga'(t), (X_j)_{\ga(t)}}^2 \bigg)^{\frac{1}{2}}.
\end{equation}

\begin{remark} 
 Clearly if $\deg(\ga)=1$ and $(N,\mh)$ is a Carnot manifold, the sub-Riemannian metric is given by $h=g|_{\mh}$ then the length functional $L_d$ coincides with the length $L$ defined in \eqref{length fun}.
\end{remark}

\section{Admissible variations for the length functional}
\label{sc:admvariations}
Since the degree is defined by an open condition, the degree can not decrease along a variation $\Gamma(t,s)$ of $\ga(t)$ in a tubular neighborhood of a curve. If it increases strictly, the length functional $L_d$, where $d$ is the degree of the original curve, takes infinite values and we cannot compute the first variation of the functional.

Let us consider a curve $\ga:I \to N$ into an equiregular graded manifold endowed with a Riemannian metric $g=\escpr{\cdot,\cdot}$. In this setting we have the following definition

\begin{definition}
\label{def:admissible}
A smooth map $\Gamma: I \times (-\eps,\eps)\to N$ is said to be \emph{an admissible variation} of $\ga$ if $\Gamma_s: I\to N$, defined by $\Gamma_s(t):=\Gamma(t,s)$, satisfies the following properties
\begin{enumerate}
\item[(i)] $\Gamma_0=\ga$,
\item[(ii)] $\Gamma_s(I)$  is an curve of the same degree as $\ga$ for small enough $s$,
\item[(iii)] $\Gamma_s(t)=\ga(t)$ for $t$ outside a given compact subset of $I$ for all $s \in (-\eps,\eps)$.
\end{enumerate}
\end{definition}

\begin{definition}
Given an admissible variation $\Gamma$, the \emph{associated variational vector field} is defined by
\begin{equation}
\label{eq:admissibleV}
V(t):=\frac{\ptl\Gamma}{\ptl s}(t,0).
\end{equation}
\end{definition}

The vector field $V$ is compactly supported in $I$. We shall denote by $\mathfrak{X}_0(I,N)$ the set of smooth vector fields along $\ga$. Hence $V\in \frak{X}_0(I,N)$ if and only if $V$ is a smooth map $V:I \to TN$ such that $V(t)\in T_{\ga(t)}N$ for all $t \in I$, and is equal to $0$ outside a compact subset of $I$.

Let us see now that the variational vector field $V$ associated to an admissible variation $\Ga$ satisfies a differential equation of first order. Let  $(X_1, \cdots, X_n)$ be an adapted frame in a neighbourhood $U$ of $\ga(t)$ for some $t\in I$. We denote by $d=\deg(\ga)$ the degree of $\ga$. As $\Gamma_s(I)$ is a curve of the same degree as $\ga(I)$ for small $s$, there follows
\begin{equation}
\label{flow that preserves degree curve}
\Big\langle\dfrac{\partial \Gamma(s,t)}{\partial t}, (X_r)_{\Gamma_s(t)} \Big \rangle =0,
\end{equation}
for all $r=n_d + 1, \ldots, n$. Taking derivative with respect to $s$ in equality \eqref{flow that preserves degree curve} and evaluating at $s=0$ we obtain the condition
\begin{equation*}
\escpr{\nabla_{\ga'(t)}V(t), (X_r)_{\ga(t)}}+\escpr{\ga'(t),\nabla_{V(t)} X_r}=0
\end{equation*}
for all $r=n_d + 1, \ldots, n$. In the above formula, $\escpr{\cdot,\cdot}$ indicates the scalar product in $N$. The symbol $\nabla$ denotes, in the left summand, the covariant derivative of vectors in $\frak{X}(I,N)$ induced by $g$ and, in the right summand, the Levi-Civita connection associated to $g$ . Thus, if a variation preserves the degree then the associated variational vector field satisfies the above condition and we are led to the following definition.
\begin{definition}
Given an curve $\ga: I\to N$, a vector field $V\in \mathfrak{X}_0(I,N)$ along $\ga$ is said to be \emph{admissible}  if it satisfies the system of first order PDEs
 \begin{equation}
 \label{System of PDEs for admissible}
\escpr{\nabla_{\ga'(t)}V(t), (X_r)_{\ga(t)}}+\escpr{\ga'(t),\nabla_{V(t)} X_r}=0
\end{equation}
where $r=n_d + 1, \ldots, n$ and $t\in I$. We denote by $\mathcal{A}_{\ga}(I,N)$ the set of admissible vector fields.
\end{definition}

Thus we are led naturally to a problem of integrability: given a vector field $V$ along $\ga$ such that the first order condition \eqref{System of PDEs for admissible} holds, we wish to find an admissible variation whose associated variational vector field is $V$.

\begin{definition}
We say that an admissible vector field $V\in\frak{X}_0(I,N)$ is \emph{integrable} if there exists an admissible variation such that the associated variational vector field is $V$.
\end{definition}

\section{The structure of the admissibility system of ODEs}
\label{sc:structure}

Let $(N,(\mathcal{H}^i))$ be an equiregular graded manifold endowed with a Riemannian metric $g=\escpr{\cdot,\cdot}$. We set $\mh:=\mh^d$, where $1\le d \le s$. For sake of simplicity the distribution $\mathcal{H}$ will be called horizontal as well as a curve of degree $d$  and we set  $k:=n_d$. Let $\gamma:I\to N$ be a horizontal curve defined in an open interval $I\subset\rr$. Take $a<b$ so that $[a,b]\subset I$.

Given an open set $U$ where an orthonormal adapted basis $(X_i)$ is defined, the admissibility condition \eqref{System of PDEs for admissible} for a vector field $V$ is 
\begin{equation}
\label{eq:admissibility-1}
\escpr{\nabla_{\ga'} V,X_r}+\escpr{\ga',\nabla_{V} X_r}=0,\quad r=k+1,\ldots,n.
\end{equation}
Expressing $V$ in terms of $(X_i)$
\[
V=\sum_{i=1}^k g_i X_i+\sum_{j=k+1}^n f_j X_j,
\]
we get that \eqref{eq:admissibility-1} is equivalent to the system of $(n-k)$ first order ordinary differential equations
\begin{equation}
\label{eq:compatibilitycond-2}
f_r'+\sum_{j=k+1}^n b_{rj} f_j+\sum_{i=1}^k a_{ri}g_i= 0,\quad r=k+1,\ldots,n,
\end{equation}
where
\begin{equation}
\label{def:AB}
a_{ri}=\escpr{\nabla_{\ga'}X_i,X_r}+\escpr{\nabla_{X_i}X_r,\ga'},\quad
b_{rj}=\escpr{\nabla_{\ga'}X_j,X_r}+\escpr{\nabla_{X_j}X_r,\ga'}.
\end{equation}

\begin{remark}
\label{rk:conststructure}
Assume that we can extend the tangent vector along $\ga$
\[
\ga'(t)= \sum_{\ell=1}^k h_{\ell}(t) (X_{\ell})_{\ga(t)},
\]
to a vector field on a tabular neighborhood of $\ga$, then we have 
\begin{align*}
a_{ri}&=\escpr{\nabla_{\ga'}X_i,X_r}+\escpr{\nabla_{X_i}X_r,\ga'}\\
         &=\escpr{[\ga',X_i], X_r}+ \escpr{ \nabla_{X_i} \ga', X_r}+\escpr{\nabla_{X_i}X_r,\ga'}\\
         &=\escpr{[\ga',X_i], X_r}+ X_i \escpr{ \ga', X_r}\\
          &=\sum_{\ell=1}^k \escpr{[h_{\ell} X_{\ell},X_i], X_r}=\sum_{\ell=1}^k h_{\ell} \escpr{[ X_{\ell},X_i], X_r}=\sum_{\ell=1}^k h_{\ell} \, c_{\ell i}^r
\end{align*}
and 
\[
b_{rj}=\sum_{\ell=1}^k h_{\ell} \, c_{\ell j}^r
\]
where $c_{\ell i}^r$ and  $c_{\ell j}^r$ for $i,\ell=1,\ldots,k$ and $j,r=k+1,\ldots,n$ are the structure functions, see for instance  \cite{Montgomery}. 

In the special case when $\mh$ is a distribution of a Carnot manifold $(N,\mh)$ the matrix $A(t)=(a_{ir})$ represents the  $\mh_{\ga(t)}$-curvature and $B(t)=(b_{jr})$  the  $H^i$-curvature restricted to $\ga'$ in the first term  with respect to metric $g$, where $H^i=\mh^i_{\ga(t)} / \mh^{i-1}_{\ga(t)}$ for $i=2, \ldots,s$, see for instance \cite{Montgomery,Gromov2, Montefalcone}.
\end{remark}

The system \eqref{eq:compatibilitycond-2} can be written in matrix form as
\begin{equation}
\label{eq:compmatrix}
F'=-BF-AG,
\end{equation}
where $B(t)$ is a square matrix of order $(n-k)$ and $A(t)$ is a matrix of order $(n-k)\times k$, and
\begin{equation}
\label{def:FG}
F=\begin{pmatrix} f_{k+1} \\ \vdots \\ f_n \end{pmatrix}, \quad G=\begin{pmatrix} g_1 \\ \vdots \\ g_k \end{pmatrix}.
\end{equation}

The system \eqref{eq:compmatrix} makes sense for any adapted orthonormal basis $(Y_i)$ defined on the curve $\gamma$, locally extended in a tubular neighborhood of the curve. Indeed, if $(X_i)$ and $(Y_i)$ are two of such adapted bases, we may write
\[
Y_i=\sum_{j=1}^n m_{ij} X_j,
\]
for some square matrix $M=(m_{ij})$ of order $n$. Since $(X_i)$ and $(Y_i)$ are adapted basis, $M$ is a block diagonal matrix
\[
M=\begin{pmatrix}
M_h & 0 \\
0 & M_v
\end{pmatrix},
\]
where $M_h$ and $M_v$ are square matrices of orders $k$ and $(n-k)$, respectively. Let us express $V$ as a linear combination of $Y_i$
\[
V=\sum_{i=1}^k \tilde{g}_i Y_i+\sum_{j=k+1}^n \tilde{f}_j Y_j,
\]
and let $\tilde{A}$, $\tilde{B}$ the associated matrices 
\begin{align*}
\tilde{A}=\big(\escpr{\nabla_{\ga'}Y_i,Y_r}+\escpr{\nabla_{Y_i}Y_r,\ga'}\big)_{i=1,\ldots,k}^{r=k+1,\ldots,n},
\\
\tilde{B}=\big(\escpr{\nabla_{\ga'}Y_j,Y_r}+\escpr{\nabla_{Y_j}Y_r,\ga'}\big)_{j=k+1,\ldots,n}^{r=k+1,\ldots,n}.
\end{align*}
Letting
\[
\tilde{F}=\begin{pmatrix} \tilde{f}_{k+1} \\ \vdots \\ \tilde{f}_n \end{pmatrix}, \quad \tilde{G}=\begin{pmatrix} \tilde{g}_1 \\ \vdots \\ \tilde{g}_k \end{pmatrix},
\]
it is immediate to obtain the following equalities
\begin{equation}
\label{eq:tilde}
\begin{split}
\tilde{F}&=M_v F,
\\
\tilde{G}&=M_h G,
\\
\tilde{A}&=M_v A M_h^t,
\\
\tilde{B}&=M_v(M_v')^t+M_vB M_v^ t.
\end{split}
\end{equation}

\begin{remark}
\label{rk:systemtilde}
We observe that the equations in \eqref{eq:tilde} imply that $\tilde{F}'+\tilde{B}\tilde{F}+\tilde{A}\tilde{G}=0$. To prove this formula it is necessary to take into account that $M_h$ and $M_v$ are orthogonal matrices. We also observe that the ranks of $A(t)$ and $\tilde{A}(t)$ coincide for any $t\in I$.
\end{remark}

\begin{remark}
\label{rk:horizontalconnection}
Given a smooth vector field $X$ and a horizontal vector field $Y$ on $N$, we  define a covariant derivative on the bundle of horizontal vector fields by
\[
\nabla^h_XY=(\nabla_XY)_h,
\]
where $(\cdot)_h$ denotes the orthogonal projection over the horizontal distribution. This covariant derivative defines a parallel transport on any curve in $N$ that preserves the Riemannian product of horizontal vector fields. Then we can extend any horizontal orthonormal basis at a given point of $\ga(I)$ to a horizontal orthonormal basis in $\ga(I)$. A similar connection can be defined on the vertical bundle using the projection over the vertical bundle that allows us to build an orthonormal basis on $\ga(I)$ of the vertical bundle. Hence we are able to produce an adapted global basis on $\ga(I)$ (see for instance \cite{TanYang,Cartan}).
\end{remark}

\section{The holonomy map}
\label{sc:holonomy}
In this section we first recall Hsu's construction of the holonomy map \cite{MR1189496}. Let $(N,(\mathcal{H}^i))$ be an equiregular graded manifold endowed with a Riemannian metric $g=\escpr{\cdot,\cdot}$. We set $\mh:=\mh^d$, where $1\le d \le s$. For sake of simplicity the distribution $\mathcal{H}$ will be called horizontal as well as a curve of degree $d$  and we set  $k:=n_d$. Given a horizontal curve $\ga:I\to N$, with $a\in I$, we consider the following spaces
\begin{enumerate}
\item $\frak{X}^r_\ga(a)$, $r\ge 0$, is the set of $C^r$ vector fields along $\ga$ that vanish at $a$.
\item $\mathcal{H}_\ga^r(a)$, $r\ge 0$, is the set of horizontal $C^r$ vector fields along $\ga$ vanishing at $a$.
\item $\mathcal{V}_\ga^r(a)$, $r\ge 0$, is the set of vertical vector fields of class $C^r$ along $\ga$ vanishing at $a$. By a vertical vector we mean a vector in $\mathcal{H}^\perp$ and  we set $\mathcal{V}_{\ga}= (\mathcal{H}_{\ga})^{\perp}$.
\end{enumerate}

We fix an adapted orthonormal basis $(X_i)$ along $\ga$ extended in a neighborhood of $\ga$. The admissibility condition \eqref{eq:admissibility-1} can be expressed globally on $\ga$ using these global vector fields. We define the \emph{admissibility} operator $\text{Am}:\frak{X}_{\ga}\to \mathcal{V}_{\ga}$ by
\begin{equation}
\label{eq:ad}
\text{Am}(Y)=\sum_{i=k+1}^n \big(\escpr{\nabla_{\gamma'}Y,X_i}+\escpr{\gamma',\nabla_Y X_i}\big)\,X_i.
\end{equation}
Observe that $\text{Am}(Y)=0$ implies that $Y$ is an admissible vector field on $\ga$.

The following result is essential for the construction

\begin{lemma}
\label{lem:existY_hY_v}
Let $\ga:I\to N$ be a horizontal curve in a graded manifold $(N,\mathcal{H})$ endowed with a Riemannian metric. Given $Z\in \frak{X}^{r-1}_\ga(a)$, there exist a unique vertical vector field $Y_v \in \mathcal{V}_\ga^r(a)$  such that $\text{\rm Am}(Z_h+Y_v)=Z_v$.
\end{lemma}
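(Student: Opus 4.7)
The plan is to reduce the statement to the existence and uniqueness of solutions to a linear first-order ODE system with a prescribed initial value, using the matrix form of the admissibility equation developed in Section \ref{sc:structure}.

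First I would fix, along $\ga([a,b])$, an adapted orthonormal basis $(X_1,\ldots,X_n)$ of the type described in the last remark of Section \ref{sc:structure} (built by parallel transport in the horizontal and vertical sub-bundles), so that the admissibility operator $\text{Ad}$ in \eqref{eq:ad} is globally defined on $\ga$. I would then decompose
\[
Z_h=\sum_{i=1}^k g_i X_i,\qquad Y_v=\sum_{j=k+1}^n f_j X_j,\qquad Z_v=\sum_{r=k+1}^n z_r X_r,
\]
where $g_i\in C^{r-1}$ and $z_r\in C^{r-1}$ are given data and the $f_j$ are the unknowns. The construction of Section \ref{sc:structure} shows that the vertical vector field $\text{Ad}(Z_h+Y_v)$ has $r$-th component equal to
\[
f_r'+\sum_{j=k+1}^n b_{rj}f_j+\sum_{i=1}^k a_{ri}g_i,\qquad r=k+1,\ldots,n,
\]
where $a_{ri}$ and $b_{rj}$ are the entries of the matrices $A(t)$ and $B(t)$ defined in \eqref{def:AB}. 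Thus the condition $\text{Ad}(Z_h+Y_v)=Z_v$ is equivalent, after setting $F=(f_{k+1},\ldots,f_n)^t$ and $G=(g_1,\ldots,g_k)^t$ as in \eqref{def:FG}, to the linear ODE
\begin{equation*}
F'(t)+B(t)F(t)=Z_{\!v}^{\,\ast}(t)-A(t)G(t),\qquad F(a)=0,
\end{equation*}
where $Z_{\!v}^{\,\ast}(t)=(z_{k+1}(t),\ldots,z_n(t))^t$ and the initial value comes from $Y_v\in\mathcal{V}_\ga^r(a)$.

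The coefficient matrices $A(t)$ and $B(t)$ are smooth along $\ga$, so the standard existence-and-uniqueness theorem for linear first-order ODEs produces a unique solution $F$ defined on all of $[a,b]$; since the forcing term on the right-hand side is of class $C^{r-1}$ (both $Z_v$ and $G$ are $C^{r-1}$) and the coefficients are smooth, bootstrapping the equation gives $F\in C^{r}$. Reassembling $Y_v=\sum_{j=k+1}^n f_j X_j$ yields the desired element of $\mathcal{V}_\ga^r(a)$, and uniqueness of $F$ translates into uniqueness of $Y_v$.

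There is no real obstacle: the only point that requires a little care is the globalization of the adapted orthonormal basis along $\ga$ (so that the ODE is not just chart-local), which is handled by the parallel-transport construction recalled in Section \ref{sc:structure}, and checking that $Z_h$ contributes only to $G$ and $Y_v$ only to $F$ in the matrix formulation, which is immediate from the block decomposition in \eqref{eq:tilde}.
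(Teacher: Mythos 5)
Your proposal is correct and follows essentially the same route as the paper: both reduce the condition $\text{Ad}(Z_h+Y_v)=Z_v$ to the linear system \eqref{eq:compatibilitycond} in the coordinates of a global adapted orthonormal frame along $\ga$, and then invoke existence and uniqueness for linear first-order ODEs with the initial condition $F(a)=0$. Your added remarks on bootstrapping the regularity to get $F\in C^r$ and on globalizing the frame are details the paper leaves implicit, but they do not change the argument.
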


\begin{proof}
We choose a global orthonormal adapted basis $(X_i)$ on $\ga$ and write
\[
Z=\sum_{i=1}^k g_iX_i+\sum_{r=k+1}^n z_rX_r,
\]
The vertical vector field $Y_v$ would be determined by their coordinates $(f_r)$ in the vertical basis $(X_r)$, where $r=k+1, \ldots,n$.

Condition $\text{Am}(Z_h+Y_v)=Z_v$ is then equivalent to the system of $(n-k)$ ordinary differential equations
\begin{equation}
\label{eq:compatibilitycond}
f_r'+\sum_{i=1}^k a_{ri}g_i+\sum_{j=k+1}^n b_{rj} f_j= z_r,\quad r=k+1,\ldots,n,
\end{equation}
where
\begin{equation*}
a_{ri}=\escpr{\nabla_{\ga'}X_i,X_r}+\escpr{\nabla_{X_i}X_r,\ga'},\quad
b_{rj}=\escpr{\nabla_{\ga'}X_j,X_r}+\escpr{\nabla_{X_j}X_r,\ga'}.
\end{equation*}

Given $(g_i)$, the system \eqref{eq:compatibilitycond} admits a unique solution defined in the whole interval $I$ with prescribed initial conditions $f_r(a)=0$, for $r=k+1,\ldots,n$. This concludes the proof.
\end{proof}

Given a horizontal curve $\ga:I\to N$ and $[a,b]\subset I$, Lemma~\ref{lem:existY_hY_v} allows us to define a holonomy  type map
\[
H_\gamma^{a,b}:\mathcal{H}^{r-1}_0((a,b))\to \mathcal{V}_{\ga(b)}
\]
like in Hsu's paper \cite{MR1189496}. Here $\mathcal{H}^{r-1}_0((a,b))$ is the space of horizontal vector fields of class $(r-1)$ with compact support in $(a,b)$. In order to define $H_\gamma^{a,b}$ we consider a horizontal vector $V_h\in\mathcal{H}_0^{r-1}(a,b)$ with compact support in $(a,b)$ and we take the only vector field $V_v\in\mathcal{V}^r_\ga(a)$ such that $\text{Am}(V_h+V_v)=0$ provided by Lemma~\ref{lem:existY_hY_v}. Then we define
\[
H_\gamma^{a,b}(V_h)=V_v(b).
\]

\begin{definition}[\cite{MR1189496}]
In the above conditions, we say that $\gamma$ restricted to $[a,b]$ is \emph{regular} if the holonomy map $H_\gamma^{a,b}$ is surjective.
\end{definition}

Given a horizontal curve $\ga:I\to N$, we choose an orthonormal adapted basis $(X_i)$ along $\gamma$. A horizontal vector field $V_h$ can be expressed in terms of this basis as $V_h=\sum_{i=1}^k g_iX_i$. The unique vertical vector field $V_v$ such that $\text{Am}(V_h+V_v)=0$ can be expressed as $V_v=\sum_{i=k+1}^n f_iX_i$. Defining $F$ and $G$ as in \eqref{def:FG}, condition $\text{Am}(V_h+V_v)=0$ is equivalent to $F'=-BF-AG$, where $A$, $B$ are the matrices defined in \eqref{def:AB}. In these conditions, the coordinates of $H_\ga^{a,b}(V_h)=V_v(b)$ in the basis  $(X_i)$ are given by $F(b)$.

The following result allows the integration of the differential equation \eqref{eq:compmatrix} to explicitly compute the holonomy map.

\begin{proposition}
\label{prop:inthol}
In the above conditions, there exists a square invertible matrix $D(t)$ of order $(n-k)$ such that
\begin{equation}
\label{eq:F(b)}
F(b)=- D(b)^{-1}\int_a^b (D A)(t) G(t) \, dt.
\end{equation}
\end{proposition}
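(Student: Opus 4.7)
The approach is the classical integrating-factor method for linear first-order ODE systems. The equation $F' = -BF - AG$ with initial condition $F(a) = 0$ is linear in $F$, so I would construct a matrix $D(t)$ that turns the left-hand side into a total derivative, then integrate.

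First, I would define $D(t)$ as the unique solution of the matrix initial value problem
\[
D'(t) = D(t)\,B(t), \qquad D(a) = I_{n-k},
\]
on the whole interval $I$. Existence and uniqueness on all of $I$ is guaranteed by linearity of the system (the coefficient matrix $B(t)$ is continuous because $\ga$ is $C^1$ and the adapted basis is smooth). The key property needed is that $D(t)$ is invertible for every $t$; this follows from Liouville's formula
\[
\det D(t) = \det D(a)\,\exp\!\left(\int_a^t \operatorname{tr} B(\tau)\,d\tau\right),
\]
which is strictly positive since $\det D(a) = 1$. Hence $D(t)$ is a regular matrix along $\ga$.

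Next I would multiply the admissibility equation $F' + BF = -AG$ on the left by $D(t)$ and use the defining relation $D' = DB$:
\[
(DF)'(t) = D'(t)F(t) + D(t)F'(t) = D(t)B(t)F(t) + D(t)\bigl(-B(t)F(t) - A(t)G(t)\bigr) = -(DA)(t)G(t).
\]
Integrating from $a$ to $b$ and using $F(a) = 0$, which gives $D(a)F(a) = 0$, I obtain
\[
D(b)F(b) = -\int_a^b (DA)(t)\,G(t)\,dt.
\]
Multiplying by $D(b)^{-1}$ yields formula \eqref{eq:F(b)}.

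There is no real obstacle here: the only subtle point is the global invertibility of $D(t)$, which is immediate from Liouville's theorem. The rest is the standard variation-of-parameters identity. The role of $D$ in what follows is to give an explicit integral representation of the holonomy map $H_\ga^{a,b}$, from which one reads off that its image is the span of the columns of the matrix-valued function $\widetilde{A}(t) = D(t)A(t)$, paving the way for Hsu's surjectivity criterion.
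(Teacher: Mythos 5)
Your proof is correct and follows essentially the same route as the paper: the paper's Lemma~\ref{lm:det} establishes the invertibility of the solution of $D'=DB$, $D(a)=I$, via exactly the Jacobi/Liouville determinant formula you use, and the rest is the same integrating-factor computation $(DF)'=-DAG$ followed by integration with $F(a)=0$. No gaps.
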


\begin{proof}
Lemma~\ref{lm:det} below allows us to find a invertible matrix $D(t)$ such that $D'= DB$. Then equation $F'=-BF-AG$ is equivalent to $(DF)'=- DAG$. Integrating between $a$ and $b$, taking into account that $F(a)=0$, and multiplying by $D(b)^{-1}$, we obtain \eqref{eq:F(b)}.
\end{proof}

\begin{lemma}
\label{lm:det}
Let $B(t)$ be a continuous family of square matrices on the interval $[a,b]$. Let $D(t)$ be the solution of the Cauchy problem
\[
D'(t)=D(t) B(t)\ \text{on }[a,b],  \quad D(a)=I_d.
\]
Then $\det  D(t)\ne0$ for each $t \in [a,b]$.
\end{lemma}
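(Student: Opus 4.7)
The plan is to use the fact that $D'=DB$ is a linear ODE and that linear ODEs admit an inverse transition matrix. My preferred route is to construct $D(t)^{-1}$ explicitly rather than to argue about $\det D$ directly.

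More precisely, I would first invoke the standard existence/uniqueness theorem for linear systems of ODEs with continuous coefficients on the compact interval $[a,b]$ to define $E(t)$ as the unique solution of the Cauchy problem
\begin{equation*}
E'(t) = -B(t)\, E(t) \quad \text{on } [a,b], \qquad E(a) = I_d.
\end{equation*}
This is a well-posed problem for the same reason $D(t)$ is, and the solution $E(t)$ exists on all of $[a,b]$.

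Next I would compute the derivative of the product $D(t)E(t)$:
\begin{equation*}
\bigl(D(t)E(t)\bigr)' = D'(t)E(t) + D(t)E'(t) = D(t)B(t)E(t) - D(t)B(t)E(t) = 0.
\end{equation*}
Hence $DE$ is constant on $[a,b]$, and evaluating at $a$ gives $D(a)E(a) = I_d \cdot I_d = I_d$. Therefore $D(t)E(t) = I_d$ for every $t \in [a,b]$, which exhibits $E(t)$ as a right inverse of $D(t)$; since $D(t)$ is a square matrix this forces $D(t)$ to be invertible, and in particular $\det D(t) \neq 0$.

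I do not foresee a real obstacle here, as this is essentially Liouville's formula for the Wronskian in disguise. One could equivalently apply the Jacobi formula to obtain $(\det D)'(t) = \operatorname{tr}(B(t))\,\det D(t)$ and conclude $\det D(t) = \exp\bigl(\int_a^t \operatorname{tr}(B(s))\,ds\bigr) > 0$, but that route requires justifying the Jacobi formula without yet knowing $D$ is invertible; the transition-matrix argument above avoids this mild subtlety and gives the stronger information that $D(t)^{-1}$ itself depends smoothly on $t$, which is convenient when it is later substituted into formula \eqref{eq:F(b)}.
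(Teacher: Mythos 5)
Your proof is correct, but it takes a different route from the paper. The paper applies the Jacobi formula $\tfrac{d}{dt}\det D=\operatorname{Tr}\!\left(\operatorname{adj}(D)\,D'\right)$, uses $\operatorname{adj}(D)\,D=\det(D)\,I_d$ to reduce this to the scalar ODE $(\det D)'=\det D\cdot\operatorname{Tr}(B)$, and solves it to get $\det D(t)=e^{\int_a^t\operatorname{Tr}(B(\tau))\,d\tau}>0$. You instead build the inverse directly by solving the adjoint equation $E'=-BE$, $E(a)=I_d$, and checking $(DE)'=0$. Both arguments are complete; yours is slightly more elementary (no Jacobi formula needed) and produces $D(t)^{-1}=E(t)$ explicitly, while the paper's yields the extra information that $\det D(t)$ is strictly positive, not merely nonzero (which for your argument would still follow from continuity of $\det D$ and $\det D(a)=1$). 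One small correction to your closing remark: the Jacobi formula in its adjugate form, as the paper uses it, is a polynomial identity valid for \emph{all} square matrices, invertible or not, since $\operatorname{adj}(D)D=\det(D)I_d$ holds unconditionally; so the ``mild subtlety'' you flag in the determinant route is not actually an obstruction.
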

\begin{proof}
By the Jacobi formula we have 
\[
\dfrac{d (\det D(t))}{dt}=\text{Tr}\left(\text{adj}\, D(t)\, \dfrac{d D(t)}{dt}\right),
\]
where $\text{adj} D$ is the classical adjoint (the transpose of the cofactor matrix) of $D$ and $\text{Tr}$ is the trace operator. Therefore
\begin{equation}
\label{eq:detode}
\dfrac{d \det(D(t))}{dt}=\text{Tr}\left((\text{adj}\, D(t)) D(t) B(t) \right)=\det D(t)\, \text{Tr}(B(t)).
\end{equation}
Since $\det D(a)=1$, the solution for \eqref{eq:detode} is given by 
\[
 \det D(t)=e^{\int_a^t \text{Tr}(B(\tau)) \, d \tau}>0,
\]
for all $t\in [a,b]$. Thus, the matrix $D(t)$ is invertible for each $t \in [a,b]$.
\end{proof}

\begin{definition}
We say that the matrix $\tilde{A}(t):=(DA)(t)$ on $\ga$ defined in Proposition \ref{prop:inthol} is linearly full in $\rr^{n-k}$ if and only if 
\[
\dim \left( \text{span} \left\{\tilde{A}^1(t), \ldots, \tilde{A}^k(t)  \quad \forall t \in [a,b] \right\} \right)=n-k,
\]
where $\tilde{A}^i$ for $i=1,\ldots,k$ are the columns of $\tilde{A}(t)$.
\end{definition}

\begin{proposition}
\label{prop:linearlyfull}
The horizontal curve $\gamma$ restricted to $[a,b]$ is regular if and only if $\tilde{A}(t)$ is linearly full in $\rr^{n-k}$.
\end{proposition}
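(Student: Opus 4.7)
The plan is to characterize the image of the holonomy map explicitly in terms of the columns of $\tilde{A}(t)$. By Proposition~\ref{prop:inthol}, the holonomy map is given by
\[
H_\gamma^{a,b}(G) = -D(b)^{-1}\int_a^b \tilde{A}(t) G(t)\, dt,
\]
and Lemma~\ref{lm:det} ensures $D(b)$ is invertible. Hence $H_\gamma^{a,b}:\mathcal{H}^{r-1}_0((a,b))\to \mathcal{V}_{\ga(b)}\cong\rr^{n-k}$ is surjective if and only if the linear map
\[
\Phi:\mathcal{H}^{r-1}_0((a,b))\to\rr^{n-k},\qquad \Phi(G):=\int_a^b \tilde{A}(t) G(t)\, dt,
\]
is surjective. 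So the proposition reduces to showing that $\Phi$ is surjective if and only if the subspace $W:=\text{span}\{\tilde{A}^i(t):i=1,\ldots,k,\ t\in[a,b]\}$ coincides with $\rr^{n-k}$.

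The key step is to prove the equality $\mathrm{Im}(\Phi)=W$ by computing orthogonal complements. The easy inclusion is $W^\perp\subseteq \mathrm{Im}(\Phi)^\perp$: if $v\in W^\perp$, then $v^T\tilde{A}(t)=0$ for every $t\in[a,b]$, so $v^T\Phi(G)=0$ for every admissible $G$. For the reverse inclusion, fix $v\in\mathrm{Im}(\Phi)^\perp$, $t_0\in(a,b)$ and $i\in\{1,\ldots,k\}$; I would choose a sequence $G_\eps\in\mathcal{H}^{r-1}_0((a,b))$ supported in a shrinking neighborhood of $t_0$, with only the $i$-th horizontal coordinate nonzero and with integral one (a standard $C^{r-1}$ bump approximating a Dirac mass). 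Since $\tilde{A}$ is continuous on $[a,b]$, $v^T\Phi(G_\eps)\to v^T\tilde{A}^i(t_0)$ as $\eps\downarrow 0$, hence $v^T\tilde{A}^i(t_0)=0$. Continuity extends this to all $t_0\in[a,b]$ and all $i$, so $v\in W^\perp$.

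Having $\mathrm{Im}(\Phi)^\perp=W^\perp$ inside the finite-dimensional space $\rr^{n-k}$, taking double orthogonal complements yields $\mathrm{Im}(\Phi)=W$ (both subspaces are automatically closed). Therefore $\Phi$ is surjective if and only if $W=\rr^{n-k}$, which by definition is the condition that $\tilde{A}$ be linearly full in $\rr^{n-k}$. Combining this with the initial reduction gives the proposition.

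There is no real obstacle here; the only point that requires a line of care is the bump-function argument in the reverse inclusion, and in particular the fact that the support condition $t_0\in(a,b)$ (open) together with continuity of $\tilde{A}$ on $[a,b]$ (closed) is enough to recover the vanishing on the whole closed interval, which is the interval appearing in the definition of linear fullness.
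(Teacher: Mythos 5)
Your proof is correct and follows essentially the same route as the paper: both reduce surjectivity of the holonomy map, via the formula of Proposition~\ref{prop:inthol} and invertibility of $D(b)$, to the question of whether a nonzero covector $\Gamma$ can annihilate $\int_a^b\tilde{A}(t)G(t)\,dt$ for all $G$, which is equivalent to $\Gamma\tilde{A}(t)=0$ on all of $[a,b]$. The only difference is presentational — you phrase it as the identity $\mathrm{Im}(\Phi)=W$ via double orthogonal complements and explicitly justify the step ``$\int_a^b\Gamma\tilde{A}(t)G(t)\,dt=0$ for all $G$ implies $\Gamma\tilde{A}\equiv0$'' with a bump-function argument, a detail the paper's proof asserts without comment.
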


\begin{proof}
Assume that the holonomy map is not surjective. Then the image of $H_\ga^{a,b}$ is contained in a hyperplane of $\mathcal{V}_{\ga(b)}$ expressed in the coordinates associated to the basis $((X_i)_{\ga(b)})_{i=k+1,\ldots,n}$ as a row vector $\Lambda\neq 0$ with $(n-k)$ coordinates. With the notation of Proposition~\ref{prop:inthol} we have
\[
0=\Lambda F(b)=-\Lambda D(b)^{-1}\int_a^b \tilde{A}(t)G(t)dt= - \int_a^b\Ga \tilde{A}(t)G(t)dt,
\]
where $\Ga=\Lambda D(b)^{-1}\neq 0$. As this formula holds for any $G(t)$, we have $\Ga\tilde{A}(t)=0$ for all $t\in [a,b]$. Hence $\tilde{A}$ is not linearly full as their columns are contained in the hyperplane of $\rr^{n-k}$ determined by $\Ga$.

Conversely, assume that $\tilde{A}$ is not linearly full. Then there exists a row vector with $(n-k)$ coordinates $\Ga\neq 0$ such that $\Ga\tilde{A}(t)=0$ for all $t\in [a,b]$. Then
\[
(\Ga D(b))F(b)=\Ga\int_a^b \tilde{A}(t)G(t)dt=0.
\]
Hence the image of the holonomy map is contained in a hyperplane if $\mathcal{V}_{\ga(b)}$ and $\ga$ is not regular.
\end{proof}

The following result provides a useful criterion of non-regularity

\begin{theorem}
\label{th:singchar}
The horizontal curve $\ga$ is non-regular restricted to $[a,b]$  if and only if there exists a row vector field $\Lambda(t)\ne0$  for all $t \in [a,b]$ that solves the following system
\begin{equation}
 \begin{cases}
 \Lambda'(t)= \Lambda(t) B(t)\\
 \Lambda(t) A(t)=0.
 \end{cases}
 \label{eq:singularsys}
\end{equation}
\end{theorem}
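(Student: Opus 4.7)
The plan is to deduce this theorem directly from Proposition~\ref{prop:linearlyfull} by using the regular matrix $D(t)$ from Lemma~\ref{lm:det} as the change of variable that converts the algebraic condition ``$\tilde{A}=DA$ is not linearly full'' into the ODE system \eqref{eq:singularsys}. The correspondence will be $\Lambda(t) = \Gamma\, D(t)$ for a constant row vector $\Gamma$.

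\medskip

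\noindent\textbf{Forward direction.} Assume $\gamma$ is non-regular on $[a,b]$. By Proposition~\ref{prop:linearlyfull}, $\tilde{A}(t)$ is not linearly full in $\rr^{n-k}$, so there exists a constant nonzero row vector $\Gamma \in \rr^{n-k}$ with $\Gamma\,\tilde{A}(t) = \Gamma\, D(t)\, A(t) = 0$ for all $t \in [a,b]$. I then set $\Lambda(t) := \Gamma\, D(t)$. Using $D' = DB$ from Lemma~\ref{lm:det}, I compute
\[
\Lambda'(t) = \Gamma\, D'(t) = \Gamma\, D(t)\, B(t) = \Lambda(t)\, B(t),
\]
and $\Lambda(t)\, A(t) = \Gamma\, \tilde{A}(t) = 0$. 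Since $D(t)$ is invertible on $[a,b]$ by Lemma~\ref{lm:det} and $\Gamma\neq 0$, the row vector $\Lambda(t)$ is nonzero for every $t \in [a,b]$. This produces the required solution of \eqref{eq:singularsys}.

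\medskip

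\noindent\textbf{Converse direction.} Suppose $\Lambda(t)\not\equiv 0$ satisfies \eqref{eq:singularsys}. Define $\Gamma(t) := \Lambda(t)\, D(t)^{-1}$ and differentiate using $(D^{-1})' = -D^{-1}D'D^{-1} = -B\, D^{-1}$:
\[
\Gamma'(t) = \Lambda'(t)\, D(t)^{-1} + \Lambda(t)\,(D^{-1})'(t) = \Lambda(t)\, B(t)\, D(t)^{-1} - \Lambda(t)\, B(t)\, D(t)^{-1} = 0.
\]
So $\Gamma$ is a constant row vector, and evaluating at $t=a$ (where $D(a) = I_d$) gives $\Gamma = \Lambda(a)$. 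Nonvanishing of $\Lambda$ (together with invertibility of $D$) forces $\Gamma \neq 0$. Finally $\Gamma\, \tilde{A}(t) = \Lambda(t)\, D(t)^{-1} D(t)\, A(t) = \Lambda(t)\, A(t) = 0$ for all $t \in [a,b]$, which means $\tilde{A}(t)$ is not linearly full. By Proposition~\ref{prop:linearlyfull} again, $\gamma$ is non-regular on $[a,b]$.

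\medskip

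\noindent\textbf{Remark on the ``nonvanishing'' condition.} I should point out that although the statement demands $\Lambda(t) \neq 0$ for all $t$, this is automatic from $\Lambda'=\Lambda B$ together with a single nonvanishing value, by uniqueness of solutions to linear ODEs. Thus there is no real obstacle here; the entire proof is a two-line change of variables applied on top of Proposition~\ref{prop:linearlyfull}, and the only nontrivial input is the invertibility of $D(t)$ provided by Lemma~\ref{lm:det}.
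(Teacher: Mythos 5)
Your proof is correct and follows essentially the same route as the paper's: both directions reduce to Proposition~\ref{prop:linearlyfull} via the correspondence $\Lambda(t)=\Gamma D(t)$ with $D$ from Lemma~\ref{lm:det}. The only (cosmetic) difference is in the converse, where you show $\Lambda D^{-1}$ is constant by direct differentiation while the paper applies uniqueness for linear ODEs to $\Phi=\Lambda-\Lambda(a)D$; these are the same argument in different clothing.
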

\begin{proof}
 
Assume that $\ga$ is nonregular in $[a,b]$, then by Proposition~\ref{prop:linearlyfull} there exists a row vector $\Gamma \ne 0$ such that 
\[
 \Gamma D(t) A(t)=0 
\] 
for all $t\in [a,b]$, where $D(t)$ solves 
\begin{equation}
\label{eq:homD}
\begin{cases}
D(t)'= D(t) B(t)\\
D(a)=I_{n-k}.
\end{cases}
\end{equation}
Since $\Gamma$ is a constant vector and $D(t)$ is a regular matrix by Lemma~\ref{lm:det} , $\Lambda(t):=\Gamma D(t)$ solves the system \eqref{eq:singularsys} and  $\Lambda(t) \ne 0$ for all $t \in [a,b]$.

Conversely, any solution of the system \eqref{eq:singularsys} is given by 
\[
\Lambda(t)= \Gamma D(t),
\]
where $\Gamma=\Lambda(0)\ne 0$ and $D(t)$ solves the equation \eqref{eq:homD}.
Indeed, let us consider a general solution $\Lambda(t)$ of \eqref{eq:singularsys}. If we set
\[
\Phi(t)=\Lambda(t)-\Gamma D(t),
\]
where $\Gamma=\Lambda(0)\ne 0$ and $D(t)$ solves the equation \eqref{eq:homD}, then we deduce 
\[
\begin{cases}
\Phi(t)'=\Phi(t)B(t)\\
\Phi(0)=0.
\end{cases}
\]
Clearly the unique solution of this system is $\Phi(t)\equiv 0$.
Hence we  conclude that $\Gamma \tilde{A}(t)=0$. Thus $\tilde{A}(t)$  is not fully linear and by Proposition~\ref{prop:linearlyfull}  we are done.
\end{proof}

\begin{remark}
\label{rk:Mchequalsing}
Notice that if we write $\Lambda(t)=(\lambda_{k+1}(t),\ldots,\lambda_n(t))$ by Remark \ref{rk:conststructure} the  equation \eqref{eq:singularsys} is equivalent to 
\begin{equation}
\label{eq:rkchara}
\begin{cases}
\lambda'_r(t)=\sum_{j=k+1}^n \lambda_j  b_{j r}=\sum_{j=k+1}^n \sum_{i=1}^k \lambda_j  c_{i r}^j u_{i}\\
\sum_{j=k+1}^n \lambda_j  a_{j h}= \sum_{j=k+1}^n  \lambda_j  c_{i h}^j u_{i}.
\end{cases}
\end{equation}
When $(N,\mh)$ is a Carnot manifold where $\mh$ is a distribution of rank $l$, then we have that $k=l$ and \eqref{eq:rkchara} is equivalent to the characteristic system in \cite[Lemma 5.2.3]{Montgomery} where the first equations $u_j=0$ for $j=l+1,\ldots,n$ are taken for granted since in Theorem \ref{th:singchar} we assume $\ga$ horizontal. In the characteristic system of \cite[Lemma 5.2.3]{Montgomery} we assume $\ga$ absolutely continuous curves with square integrable derivative while in our construction $\ga$ is at least $C^1$, so that the coefficients of $A$ and $B$ are at least continuous.
\end{remark}

\subsection{Independence on the metric}
Let $g$ and $\tilde{g}$ be two Riemannian metrices on $N$ and $(X_i)$  be orthonormal adapted basis with respect to $g$  and $(Y_i)$ with respect to $\tilde{g}$. Clearly we have 
\[
Y_i=\sum_{j=1}^n m_{ji} X_j,
\]
for some square invertible matrix $M=(m_{ji})_{j=1,\ldots,n}^{i=1,\ldots,n}$ of order $n$. Since $(X_i)$ and $(Y_i)$ are adapted basis, $M$ is a block matrix
\[
M=\begin{pmatrix}
M_h & M_{hv}\\
0 & M_v
\end{pmatrix},
\]
where $M_h$ and $M_v$ are square matrices of orders $k$ and $(n-k)$, respectively, and $M_{hv}$ is a $k \times (n-k)$ matrix.
\begin{remark}
One can easily check that the inverse of $M$ is given by the block matrix 
\[
M^{-1}=\begin{pmatrix}
M_h^{-1} & - M_h^{-1} \, M_{hv} M_{v}^{-1}\\
0 & M_v^{-1}
\end{pmatrix}.
\]
Setting $\tilde{G}=(\tilde{g}(X_i,X_j))_{i,j=1,\ldots,n}$ we have 
\[
\tilde{G}=\begin{pmatrix}
\tilde{G}_h & \tilde{G}_{hv}\\
(\tilde{G}_{hv})^t & \tilde{G}_v
\end{pmatrix}=(M^{-1})^t (M^{-1}).
\]
Thus it follows  
\begin{align*}
\tilde{G}_v&= (M_v^{-1})^t M_v^{-1} + (M_v^{-1})^t M_{hv}^t ( M_h^{-1})^t \, M_h^{-1} M_{hv} M_v^{-1},\\
\tilde{G}_{hv}&=-  (M_h^{-1})^tM_h^{-1} M_{hv} M_v^{-1},\\
\tilde{G}_{h}&= (M_h^{-1})^t  M_h^{-1} .
\end{align*}
\label{rk:MG}
\end{remark}
Let $\tilde{A}$ be the associated matrix 
\begin{align*}
\tilde{A}=\big( \tilde{g} (Y_r , [\ga', Y_i ]) \big)^{i=1,\ldots,k}_{r=k+1,\ldots,n}.
\end{align*}
Then a straightforward computation implies
\begin{equation*}
\begin{aligned}
\tilde{A}=& (M_{hv})^t \left(\tilde{G}_{h} \, C_h \, M_h+  \tilde{G}_{hv} \, A \, M_h + \tilde{G}_{h} \,   M_h ' \right)\\
&+ (M_v)^t \left( (\tilde{G}_{hv})^t \, C_h M_h+ \tilde{G}_v \, A \, M_h+(\tilde{G}_{hv})^t M_v' \right),
\end{aligned}
\end{equation*}
and, by Remark~\ref{rk:MG}, we obtain 
\begin{equation}
\label{eq:tildeA}
\begin{aligned}
\tilde{A}&= (M_{hv})^t \left((M_h^{-1})^t M_h^{-1} \,( C_h \, M_h + M_h ' )-  (M_h^{-1})^t M_h^{-1} M_{hv} M_v^{-1} \, A \, M_h   \right)\\
&\qquad - \left(  M_{hv}^t (M_h^{-1})^t M_h^{-1} \, (C_h M_h + M_v')\right)\\
&\qquad +\left(  M_v^{-1} +  M_{hv}^t ( M_h^{-1})^t \, M_h^{-1} M_{hv} M_v^{-1} \right) \, A \, M_h\\
&= M_v^{-1} \, A \, M_h.
\end{aligned}
\end{equation}

Now let $\tilde{B}$ be the associated matrix 
\begin{align*}
\tilde{B}=\big( \tilde{g} (Y_r , [\ga', Y_j ]) \big)^{j=k+1,\ldots,n}_{r=k+1,\ldots,n}.
\end{align*}
Then it is immediate to obtain the following equality
\begin{align*}
\tilde{B}&= M_{vh}^t \Big(\tilde{G}_h C_h M_{hv}+ \tilde{G}_{hv} A M_{hv}+ \tilde{G}_{h} C_{hv} M_v 
\\ &\quad  + \tilde{G}_{hv} B M_v + \tilde{G}_{h} M_{hv}' +\tilde{G}_{hv} M_v'\Big)\\
            & \quad   + M_v^t\Big( (\tilde{G}_{hv})^t C_h M_{hv}+ \tilde{G}_{v} A M_{hv}+ (\tilde{G}_{hv})^t C_{hv} M_v
\\
&\quad + \tilde{G}_{v} B M_v+ (\tilde{G}_{hv})^t M_{hv}'+ \tilde{G}_{v} M_v' \Big).
\end{align*}
By Remark \ref{rk:MG} we obtain 
\begin{equation}
\label{eq:tildeB}
\tilde{B}=  M_v^{-1} \, A \, M_{hv} + M_v^{-1} B  M_v + M_v^{-1} M_v '.
\end{equation}

\begin{proposition}
\label{prop:indmetric}
Let $\ga:I \to N$ be a curve immersed in a graded manifold $(N,\mh^1, \ldots, \mh^s)$. Let $g$ and $\tilde{g}$ be two Riemannian metrics on $TN$. Then $\ga$ is regular in $[a,b]$ with respect to $g$ if and only if $\ga$ is regular in $[a,b]$ with respect to $\tilde{g}$.
\end{proposition}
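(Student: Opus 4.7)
The plan is to reduce the statement to the characterization of non-regularity in Theorem~\ref{th:singchar} and then exploit the change-of-frame formulas \eqref{eq:tildeA} and \eqref{eq:tildeB} that have just been established. Since regularity and non-regularity are complementary conditions, it suffices to prove that non-regularity of $\ga$ on $[a,b]$ is invariant under replacing $g$ by $\tilde{g}$.

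Suppose first that $\ga$ is non-regular on $[a,b]$ with respect to $g$. By Theorem~\ref{th:singchar} there exists a nowhere-vanishing row vector field $\Lambda(t)$ on $[a,b]$ such that $\Lambda A = 0$ and $\Lambda' = \Lambda B$, where $A$ and $B$ are the matrices associated to the basis $(X_i)$. I would then propose the candidate
\[
\tilde{\Lambda}(t) := \Lambda(t)\, M_v(t).
\]
Using \eqref{eq:tildeA} one checks immediately that
\[
\tilde{\Lambda}\,\tilde{A} = \Lambda M_v\, M_v^{-1} A M_h = (\Lambda A) M_h = 0,
\]
and using \eqref{eq:tildeB} together with $\Lambda A = 0$,
\[
\tilde{\Lambda}\,\tilde{B} = \Lambda M_v \bigl(M_v^{-1} A M_{hv} + M_v^{-1} B M_v + M_v^{-1} M_v'\bigr) = \Lambda B M_v + \Lambda M_v' = \tilde{\Lambda}'.
\]
Since $M_v(t)$ is invertible everywhere (as the vertical diagonal block of the block-upper-triangular change-of-adapted-frame matrix $M$), $\tilde{\Lambda}$ never vanishes on $[a,b]$. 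Applying Theorem~\ref{th:singchar} in the opposite direction with respect to $\tilde{g}$ yields that $\ga$ is non-regular on $[a,b]$ with respect to $\tilde{g}$.

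The converse implication follows by exchanging the roles of the two metrics: the matrix $M^{-1}$ is again block upper triangular with invertible vertical block $M_v^{-1}$, so the same argument applies symmetrically. I do not expect a genuine obstacle here, as all of the computational effort has already been absorbed into the derivation of \eqref{eq:tildeA} and \eqref{eq:tildeB}; the only substantive observation is that covectors $\Lambda$ detecting singularity transform by the vertical block $M_v$, which is exactly the transformation rule that makes both defining equations of the singular system covariant under the change of frame.
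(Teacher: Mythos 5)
Your proof is correct and uses the same two ingredients as the paper's: the characterization of non-regularity via the system \eqref{eq:singularsys} in Theorem~\ref{th:singchar}, and the transformation laws \eqref{eq:tildeA}, \eqref{eq:tildeB}. The verification is exactly right: $\tilde{\Lambda}\tilde{A}=(\Lambda A)M_h=0$, and $\tilde{\Lambda}\tilde{B}=\Lambda A M_{hv}+\Lambda B M_v+\Lambda M_v'=\Lambda' M_v+\Lambda M_v'=\tilde{\Lambda}'$ once $\Lambda A=0$ and $\Lambda'=\Lambda B$ are used, while invertibility of $M_v$ gives both the non-vanishing of $\tilde{\Lambda}$ and the symmetry needed for the converse. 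The one point where your route genuinely differs is that you keep a general $\tilde{B}$: the paper first normalizes to $\tilde{B}=0$ ``without loss of generality'' by invoking the matrix $D(t)$ of Proposition~\ref{prop:inthol} (a reduction stated somewhat loosely, since what is really being changed is the adapted frame along $\ga$, not the metric), and then transports the covector only in the direction from $\tilde{g}$ to $g$, where it is constant. Your direct computation makes that normalization unnecessary and treats the two implications on an equal footing, at no extra cost. No gaps.
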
 
\begin{proof}
Let $(Y_i)$ be an orthonormal adapted basis along $\ga$ with respect to $\tilde{g}$. Without loss of generality we assume that the system \eqref{eq:compmatrix} with respect to metric $\tilde{g}$ is given by 
\[
\tilde{F}'(t)=- \tilde{A}(t) \tilde{G}(t)
\]
and $\tilde{B}=0$.
This is not restrictive since  starting by a generic metric $\tilde{g}_0$, by Proposition \ref{prop:inthol}, there exists a matrix $D(t)$ that transforms the metric $\tilde{g}_0$ in the metric $\tilde{g}$. Let $(X_i)$ be an orthonormal adapted basis along $\ga$ with respect to $g$. Then there exists an invertible block matrix 
\[
M=\begin{pmatrix}
M_h & M_{hv}\\
0 & M_v
\end{pmatrix},
\]
such that 
\[
Y_i=\sum_{j=1}^n m_{ji} X_j.
\]
 Then, by Theorem~\ref{th:singchar} a curve $\ga$ is non-regular with respect to $\tilde{g}$ if and only if $\tilde{\Lambda}(t)= \tilde{\Lambda}\ne0$ is a constant row vector such that $\tilde{\Lambda}\tilde{A}(t)=0$ for each $t \in [a,b]$. By equation \eqref{eq:tildeA}  we obtain  
\[
0=\tilde{\Lambda}\tilde{A}(t)=\tilde{\Lambda} M_v^{-1}(t) \, A(t) \, M_h(t).
\]
Since $M_h$ is invertible, setting $\Lambda(t):=\tilde{\Lambda} M_v^{-1}(t)\ne0$ we deduce $\Lambda(t) A(t)=0$. Moreover by \eqref{eq:tildeB} we have 
\[
0=\tilde{B}=M_v^{-1} \, A  \, M_{hv} + M_v^{-1} \,B \, M_v + M_v^{-1} M_v '.
\]
Multiplying both sides we deduce 
\begin{equation}
\label{eq:tildeder}
0=\tilde{\Lambda} M_v^{-1} \,B \, M_v + \tilde{\Lambda} M_v^{-1} M_v'.
\end{equation}
Differentiating the identity $M_v^{-1} M_v$ it follows 
\[
M_v^{-1} M_v '=- (M_v^{-1})' M_v
\]
Putting this identity in \eqref{eq:tildeder} we have 
\[
\tilde{\Lambda} M_v^{-1} \,B \, M_v = \tilde{\Lambda} (M_v^{-1})' M_v.
\]
Since $\tilde{\Lambda}$ is constant and $M_v$ is invertible we conclude  
$\Lambda'(t)= \Lambda(t) B(t)$. By Theorem~\ref{th:singchar}, this means that $\ga$ is non-regular with respect to $g$.
\end{proof}

Proposition~\ref{prop:indmetric}  shows that the definition of regularity for a curve  $\ga$ is independent of the choice of Riemannian metric $g=\escpr{\cdot,\cdot}$ on the tangent bundle $TN$.

\subsection{Some low-dimensional examples}

\begin{remark}
Let $A$ be the matrix defined in \eqref{def:AB} with respect to an adapted basis $(X_i)$ along $\ga$. Notice that if there exists a point $\bar{t}\in (a,b)$ such that 
\begin{equation}
\label{eq:rankA}
\text{rank}\, A(\bar{t})=n-k,
\end{equation}
for some adapted basis $(X_i)$, then the curve $\ga$ is regular in $[a,b]$. In particular, if we assume that \eqref{eq:rankA} holds for some $\bar{t}\in (a,b)$, then the curve $\ga$ is regular in $[a,b]$. Notice that this condition implies $n-k\le k$, that is $\tfrac{n}{2}\le k$.
\end{remark}

\begin{example}
\label{ex:regularcontact}
Any horizontal curve $\ga:I\to M^{2n+1}$ in a contact sub-Riemannian manifold $(M^{2n+1},\mathcal{H}=\text{ker}(\omega))$, is regular. Let $T$ be the Reeb vector field. We extend the vector field $\ga'$ along $\ga$ to a vector field on $M$.  Given a contact manifold $M$, one can assure the existence of a Riemannian metric $g=\escpr{\cdot,\cdot}$ and an $(1, 1)$-tensor field $J$ so that
\begin{equation}
\label{eq:contactmetric}
\escpr{T, X} = \omega(X), \quad  2\escpr{X, J(Y )} = d\omega(X, Y ), \quad  J^2(X) = -X + \omega(X) T.
\end{equation}
The structure given by $(M, \omega, g, J)$ is called a contact Riemannian manifold, see \cite{BlairRGCS} and  \cite{galliRitore}. In particular the structure $(M ,\mh, g)$ is a Carnot manifold. Then, we  fix an orthonormal adapted basis $(X_1, \ldots,X_{2n},T)$ along $\ga$, where $X_i \in \mh$ for $i=1,\ldots,2n$. Then the admissibility equation for $V=\sum_{i=1}^{2n} f_i X_i+ f_{2n+1} T$ is given by 
\[
f_{2n+1}'(t)= - b \, f_{2n+1}(t) - A \left(\begin{array}{c}
         f_{1}(t)\\
         \vdots\\
         f_{2n}(t)
        \end{array}
 \right),
\]
where $b=\escpr{[\ga',T],T}$ and $A=(a_{1}, \ldots, a_{2n})$ with 
\begin{align*}
a_i&=\escpr{\nabla_{\ga'} X_i, T}+\escpr{\nabla_{X_i}T,\ga'}\\
&=\escpr{[\ga',X_i],T}=\omega([\ga',X_i])\\
&=-d \omega(\ga',X_i)=-2 \escpr{\ga', J(X_i)}= 2\escpr{J(\ga'),X_i}.
\end{align*}
Since $J(\ga'(t))\in \mh_{\ga(t)}$ and $J(\ga'(t))\ne0$ for all $t \in I$  we have $\text{rank}\,A(t)=1$ for all $t \in I$. Hence $\gamma$ is regular in every subinterval of $I$.
\end{example}

Here we show a well-known example of horizontal singular curve, discovered by Sussmann in \cite{Sussman}.
\begin{example}
\label{ex:hsc}
An Engel structure $(E,\mh)$ is $4$-dimensional Carnot manifold where $\mh$ is a two dimensional distribution of step $3$. A representation of the Engel group $\mathbb{E}$, which is the tangent cone to each Engel structure, is given by $\rr^4$ endowed with the distribution $\mh$ generated by 
\[
X_1=\partial_{x_1} \quad \text{and} \quad X_2= \partial_{x_2} + x_ 1\partial_{x_3}+ \dfrac{x_1^2}{2} \partial_{x_4}.
\] 
The second layer is generated by
 $$X_3=[X_1,X_2]= \partial_{x_3}+ x_1 \partial_{x_4}$$
 and the third layer by $X_4=[X_1,X_3]=\partial_{x_4}$. Let $\ga: \rr \to \rr^4$ be the horizontal curve parametrized by $\ga(t)=(0,t,0,0)$ whose tangent vector $\ga'(t)$ is given by $\partial_{x_2}$. We consider the Riemannian metric $g=\escpr{\cdot, \cdot}$ that makes $(X_1,\ldots,X_4)$ an orthonormal basis. By Remark \ref{rk:conststructure} we have $b_{rj}=c_{2 j}^r=0$ for $r,j=3,4$ and $a_{ri}=c_{2 i}^r=0$ for $r=3,4$, $i=1,2$ and $(r,i)\ne(3,1)$. Since $c_{2 1}^3=\escpr{[X_2,X_1], X_3}$ we have  $a_{3 1}=-1$. Let $[a,b]\subset \rr$, $a<b$. Then, by Proposition~\ref{prop:inthol} the holonomy map is given by 
\[
 H_{\ga}^{a,b}(G)=F(b)=\left(
 \begin{array}{c}
 \int_{a}^b g_1(t) dt\\
 0
 \end{array} \right),
\]
for all $g_1, g_2 \in C_0^{\infty}([a,b])$. Therefore we deduce the holonomy map is not surjective, thus $\ga$ is non-regular restricted at each interval $[a,b]$. Clearly we observe that the constant matrix
\[
A(t)=\left(
 \begin{array}{c c}
 -1& 0 \\
  0& 0
 \end{array} \right)
\]
is not linearly full since it generates a one dimensional subspace of $\rr^2$.
\end{example}

Slightly changing the distribution of Example \ref{ex:hsc} we show an example singular curve of degree $2$.
\begin{example}
Let us consider $\rr^5$ endowed with the distribution $\mh$ generated by
\[
X_1=\partial_{x_1} \quad \text{and} \quad X_2=\partial_{x_5}+ x_1\left(\partial_{x_2} +\dfrac{ x_ 1}{2} \partial_{x_3}+ \dfrac{x_1^3}{6} \partial_{x_4}\right).
\]
The second layer is generated by 
\[
X_3=[X_1,X_2]=\partial_{x_2} + x_ 1\partial_{x_3}+ \dfrac{x_1^2}{2} \partial_{x_4},
\]
the third layer by $X_4=[X_1,X_3]= \partial_{x_3}+ x_1 \partial_{x_4}$ and the fourth layer by  $X_5=[X_1,X_4]=\partial_{x_4}$. Now the curve $\ga: \rr \to \rr^5$ parametrized by $\ga(t)=(0,t,0,0,0)$ is a curve of degree $2$, its tangent vector $\ga'(t)$ is given by $\partial_{x_2}$. Therefore we have $n_2=3$. We consider the Riemannian metric $g=\escpr{\cdot, \cdot}$ that makes $(X_1,\ldots,X_5)$ an orthonormal basis. Due to the computation provided in Example \ref{ex:hsc} we deduce that 
\[
A= \left(\begin{array}{ccc} -1 & 0 & 0\\
                                           0 & 0 &0 
             \end{array} \right)
\quad 
B= \left(\begin{array}{cc} 0 & 0\\
                                        0 &0 
             \end{array} \right).            
\]
Since $A$ is not linearly full we conclude that $\ga$ is a non-regular curve of degree $2$.
\end{example}

\begin{example}
Let $\mh$ be $3$-dimensional distribution on $\rr^5$ generated by 
\[
X_1=\partial_{x_1} , \quad X_2= \partial_{x_2} + x_ 1\partial_{x_3}+ \dfrac{x_1^2}{2} \partial_{x_4} \quad \text{and} \quad X_3=\partial_5.
\] 
The second layer is generated by $X_4=[X_1,X_2]$ and the third layer by $X_5=[X_1,X_4]$. Let $\ga: \rr \to \rr^5$ be the horizontal curve parametrized by $\ga(t)=(0,0,0,0,t)$ whose tangent vector $\ga'(t)$ is given by $X_3=\partial_{x_5}$. We consider the Riemannian metric $g=\escpr{\cdot, \cdot}$ that makes $(X_1,\ldots,X_5)$ an orthonormal basis. Since $X_3$ does not commute with any vector fields of the basis we deduce that $A=0$ and $B=0$. Then the admissibility equation \eqref{eq:compmatrix} is given $F'=0$ with initial condition $F(a)=0$. Clearly the holonomy map is not surjective since $F(b)=0$.  Therefore the curve $\ga$ is non-regular on each subinterval $[a,b]\subset \rr$.
\end{example}

\begin{example}[Kolmogorov]
Let us consider in $\rr^4$ the Kolmogorov operator
\[
 L=\partial_t + x \, \partial_y+ \dfrac{x^2}{2} \, \partial_z- \partial_{xx}^2,
\]
where $(x,y,z,t)$ is a point in $\rr^4$. Notice that $L$ is homogenous of degree two under the dilation
$\delta_{\lambda}((x,y,z,t))=(\lambda x,\lambda^3 y,\lambda^4 z,\lambda^2 t)$. Therefore the graded structure adapted to $L$ is given by $\rr^4$ endowed with the filtration
\begin{align*}
\mh^1&= \text{span}\{X_1=\partial_x\}\\
\mh^2&=\text{span}\{X_1,X_2=\partial_t + x \, \partial_y+ \tfrac{x^2}{2} \, \partial_z\}\\
\mh^3&=\text{span}\{X_1, X_2, X_3=[X_1,X_2]=\partial_y+ x\partial_z \}\\
\mh^4&=\text{span}\{X_1, X_2, X_3, X_4=[X_1,X_3]=\partial_z\}.
\end{align*}
Setting $\mh:=\mh^2$, we allow only curve of degree less than or equal to two. Due to the computations developed in Example \ref{ex:hsc}, we obtain that $\ga(s)=(0,0,0,s)$ is singular of degree two. 
\end{example}

\begin{example}
\label{ex:heis1}
The $3$-dimensional Heisenberg group $\mathbb{H}^1$ is a Lie group defined by a Lie algebra $\mathfrak{h}$ generated $\{X,Y,T\}$, where the only non-trivial relation is $T=[X,Y]$. Setting $\mh=\text{span}\{X,Y\}$, $(\mathbb{H}^1, \mh)$ is the simplest example of Carnot group, that clearly is a Carnot manifold. A possible presentation of the Heisenberg group is provided by $\rr^3$ where the vector field $\{X,Y,T\}$ are given by
\[
X=\dfrac{\partial}{\partial {x}}-\dfrac{y}{2}\dfrac{\partial}{\partial_t}, \quad
Y=\dfrac{\partial}{\partial {y}}+\dfrac{x}{2}\dfrac{\partial}{\partial_t} \quad 
T=\dfrac{\partial}{\partial_t}.
\]
As we point out in Section \ref{sc:submanifold} each $C^1$  surface $\Sigma$ immersed in the  $\mathbb{H}^1$ inherits a structure of graded manifold $(\Sigma, \tilde{\mh}^1,\tilde{\mh}^2)$, where $\tilde{\mh}^1=\mh \cap T \Sigma$ and $\tilde{\mh}^2= T \Sigma$. Moreover $\Sigma\smallsetminus \Sigma_0$, where $\Sigma_0$ denotes the characteristic set, is an equiregular graded manifold. The foliation properties by horizontal  integral curves of $\tilde{\mh}^1$ have been deeply studied by \cite{scottpauls,GarofaloNh,ChengMalchiodi}. We notice that each integral curve $\ga: I \to \Sigma$ of  $\tilde{\mh}^1$ is singular restricted to each $[a,b]\subset I$. Since the $\dim(\tilde{\mh}^1)=1$ the one dimensional matrix $A(t)=0$ and the admissibility system \eqref{eq:compmatrix} is given by 
\[
F'=BF.
\]
Fixing the initial condition $F(a)=0$ the unique solution of the homogeneous system is $F(t)\equiv0$, thus the holonomy map is not surjective.

\end{example}

\begin{remark}
Let $\partial_{x_1},\ldots,\partial_{x_n}$ be the Euclidean basis in $\rr^n$, $n\ge2$. Let $1\le k \le n$. Assume that we set $\mh^1=\text{span}\{\partial_{x_1},\ldots,\partial_{x_k}\} $ and $\mh^2=T\rr^n$. Let $\ga:\rr \to \rr^n$ be a curve in $\mh^1$ such that $\ga'(t)=\sum_{\ell=1}^k h_{\ell} \, \partial_{x_{\ell}}$. Let $\escpr{\cdot,\cdot}$ be the standard Euclidian metric in $\rr^n$. Setting $\mh=\mh^1$ we obtain that matrices defined  in \eqref{def:AB} is given by
\[
a_{r i}=\sum_{\ell=1}^k h_{\ell} \escpr{[\partial_{x_\ell}, \partial_{x_i}], \partial_{x_r}}=0
\]
and 
\[
b_{r j}=\sum_{\ell=1}^k h_{\ell} \escpr{[\partial_{x_\ell}, \partial_{x_j}], \partial_{x_r}}=0,
\]
for all $i=1,\ldots,k$ and $r,j=k+1,\ldots,n$. Since $A=0$ and $B=0$ we deduce that $F'(t)=0$ and $F(a)=0$. Therefore $F(b)=0$ and the holonomy map is not surjective. Hence in this setting each horizontal curve is singular\end{remark}

\section{A new integrability criterion for admissible vector fields}
\label{sc:weakint}
In this subsection we give a sufficient condition for a horizontal curve $\ga:I\to N$ to be regular in $[a,b]\subset I$. The condition is that the matrix $A(t)$ associated to the admissibility system of differential equations \eqref{eq:compmatrix}, defined in \eqref{def:AB}, has rank $(n-k)$ for any $t\in I$. By Proposition~\ref{prop:linearlyfull}, this condition implies that the curve $\ga$ is regular in $[a,b]$. The aim of this section is to give a direct proof of this fact, \emph{that generalizes to higher dimensions}.

We consider the following spaces:
\begin{enumerate}
\item $\mathfrak{X}^r(I,N)$, $r\ge0$: the set of $C^r$ vector fields along $\ga$.
\item $\mh^r(I,N)$, $r\ge0$: the set of $C^r$  horizontal vector fields along $\ga$.
\item $\mathcal{V}^r(I,N):=\{Y\in \mathfrak{X}^r(I,N): \escpr{Y,X}=0 \ \forall X \in \mh^r(I,N) \}=\mh^r(I,N)^{\perp}$.
\end{enumerate} 
We shall denote by $\Pi_v$ the orthogonal projection over the vertical subspace.

As in the previous subsections we consider an orthonormal adapted basis $(X_i)$ along $\gamma$ and the associated admissibility system in matrix form $F'=BF+AG$, where the matrices $A$, $B$, $F$ and $G$ are defined in \eqref{def:AB} and \eqref{def:FG}.

\begin{definition}
\label{def:stronglyregular}
We say that a horizontal curve $\ga: I \to N$ is \emph{strongly regular} at $t\in I$ if $\text{rank}\, A (t) =n-k$. We say that $\ga$ is \emph{strongly regular} in $J\subset I$ if it is strongly regular at every $t\in J$.
\end{definition}

\begin{remark} \mbox{}
\begin{enumerate}
\item Given $t_0\in I$ such that $\ga$ is strongly regular at $t_0$, there exists a small neighborhood $J$ of $t_0$ in $I$ where the rank of $A(t)$ is given by a fixed subset of columns of $A(t)$ for all $t\in J$. This neighborhood $J$ can be extended to a maximal one where this property holds.
\item Notice that a curve $\ga$ can be strongly regular at $t \in I$ only when $k \ge \frac{n}{2}$.
\end{enumerate}
\end{remark}

The third equation in \eqref{eq:tilde} implies that this definition is independent of the chosen adapted orthonormal basis along $\ga$. With this definition, we are able to prove

\begin{lemma}
\label{lm:kerA}
Let $k\ge \frac{n}{2}$. Let $A(t)$ be the $C^r$ $(n-k) \times k$ matrix defined in \eqref{def:AB} with respect to $(X_1,\ldots,X_n)$.  Assume that $\text{\emph{rank}}\, A (t) =n-k$ for $t\in [a,b]$. Then there exists a horizontal orthonormal global basis $\tilde{X}_1,\ldots,\tilde{X}_k$ on $[a,b]$ such that the matrix $\tilde{A}(t)$ with respect to the orthonormal global basis $(\tilde{X}_1,\ldots,\tilde{X}_k, X_{k+1},\ldots,X_n)$ is given by
\[
\tilde{A}(t)=\big( \tilde{A}_1(t)\ \ 0 \big),
\]
 where $\tilde{A}_1(t)$ is an invertible square $(n-k)$ matrix.
\end{lemma}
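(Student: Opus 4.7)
The plan is to exploit the transformation rule \eqref{eq:tilde} under a change of only the horizontal frame: writing $\tilde X_i = \sum_{j=1}^k (M_h)_{ij} X_j$ for an orthogonal matrix $M_h(t)$ and keeping $X_{k+1},\ldots,X_n$ unchanged corresponds to $M_v = I_{n-k}$, and therefore $\tilde A = A M_h^t$. The problem is thus reduced to exhibiting an orthogonal matrix $M_h(t)$ of class $C^r$ on $[a,b]$ whose last $2k-n$ rows (i.e.\ the last $2k-n$ columns of $M_h^t$) span $\ker A(t)$. With this choice the last $2k-n$ columns of $\tilde A(t)$ vanish, while the first $n-k$ columns are the images under $A(t)$ of an orthonormal basis of $(\ker A(t))^\perp$. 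Since $A(t)$ restricts to a linear isomorphism $(\ker A(t))^\perp \to \rr^{n-k}$ (as $\mathrm{rank}\,A(t)=n-k$), these columns are linearly independent and $\tilde A_1(t)$ is invertible.

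To produce $M_h(t)$, the starting point is the constant-rank assumption, which guarantees that the Gram matrix $A(t)A(t)^t$ is invertible. Thus
\[
P(t) := A(t)^t\bigl(A(t)A(t)^t\bigr)^{-1}A(t), \qquad I_k-P(t),
\]
are $C^r$ families of orthogonal projections onto $(\ker A(t))^\perp$ and $\ker A(t)$, respectively. Locally around any $t_0\in[a,b]$ one can select $n-k$ (resp.\ $2k-n$) standard basis vectors of $\rr^k$ whose images under $P(t_0)$ (resp.\ $(I-P)(t_0)$) are linearly independent; by continuity the same holds on a neighborhood of $t_0$, and Gram--Schmidt applied there produces local $C^r$ orthonormal frames of $(\ker A)^\perp$ and $\ker A$. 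Covering the compact interval $[a,b]$ by finitely many such neighborhoods and patching, one obtains global $C^r$ orthonormal bases $w_1(t),\ldots,w_{n-k}(t)$ of $(\ker A(t))^\perp$ and $w_{n-k+1}(t),\ldots,w_k(t)$ of $\ker A(t)$. Taking $M_h(t)$ to be the orthogonal matrix whose $i$-th row is $w_i(t)^t$ and defining $\tilde X_i(t):=\sum_{j=1}^k (w_i(t))_j\, X_j|_{\ga(t)}$ yields the required orthonormal horizontal frame along $\ga$, and the block form of $\tilde A(t)$ is immediate from the preceding paragraph.

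The one delicate point is the global patching of the local $C^r$ orthonormal frames into a single one on all of $[a,b]$. This is the classical triviality statement for $C^r$ vector subbundles over a contractible base, but one can sidestep it entirely by the following ODE construction: fix an orthonormal basis $w_{n-k+1}(a),\ldots,w_k(a)$ of $\ker A(a)$ and evolve it by the linear ODE
\[
w_i'(t) = \bigl(I_k-P(t)\bigr)'\, w_i(t), \qquad i=n-k+1,\ldots,k,
\]
which preserves membership in $\ker A(t)$ and pairwise orthonormality (a direct check using $(I-P)^2=I-P$); the same procedure on $(\ker A(t))^\perp$ yields $w_1(t),\ldots,w_{n-k}(t)$. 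This produces the desired global $C^r$ frame in one stroke, and the verification of the block structure of $\tilde A(t)$ completes the proof.
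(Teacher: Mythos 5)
Your proof is correct, but it follows a genuinely different route from the paper's. The paper proceeds by induction on $\dim\ker A(t)=2k-n$: it produces one global non-vanishing $C^r$ section of $\ker A(t)$ at a time (for kernel dimension $>1$ by summing local kernel sections against a partition of unity, arranged to be linearly independent on overlaps so the sum cannot vanish; for dimension $1$ by matching signs of local unit sections), normalizes it, completes it to an orthonormal horizontal frame, and peels off the resulting zero column before repeating. You instead build the whole frame in one stroke: the constant-rank hypothesis makes $A A^t$ invertible, so $P=A^t(AA^t)^{-1}A$ is an explicit $C^r$ projector onto $(\ker A)^\perp$, and the transport ODE $w'=Q'w$ with $Q=I-P$ (resp.\ $Q=P$) carries an orthonormal basis of $\ker A(a)$ (resp.\ of its complement) along $[a,b]$; the identities $QQ'Q=0$ and $QQ'=Q'(I-Q)$, both consequences of $Q^2=Q$, do confirm that membership in $\mathrm{ran}\,Q(t)$ and orthonormality are preserved, and mutual orthogonality of the two families is automatic since they lie in complementary ranges. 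The final step, $\tilde A=AM_h^t$ with the last $2k-n$ columns killed and $\tilde A_1$ invertible because $A$ is injective on $(\ker A)^\perp$, matches the paper. What your approach buys is an explicit, global, coordinate-free construction that delivers the $C^r$ regularity of the frame directly from the regularity of $A$ and avoids both the patching of local sections and the slightly delicate sign-matching in the one-dimensional kernel case; what the paper's approach buys is elementarity (only partitions of unity and Gram--Schmidt, no auxiliary ODE beyond what is already in the section). Either argument establishes the lemma.
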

\begin{proof}
The proof is by induction on the dimension of the kernel of $A$ that is equal to $2k-n$. 

When $\dim(\ker A(t))=1$ locally there exist two unitary vector fields $X(t)$ and $-X(t)$ in the kernel of $A(t)$. We  define a global vector field $\tilde{X}_{k}(t)$ locally choosing one of the two unit vectors $X(t)$ or $-X(t)$ in the kernel and adjusting them in the overlapping intervals. Then we  extend the unitary vector field $\tilde{X}_k$ to an orthonormal horizontal basis $(\tilde{X}_1, \ldots, \tilde{X}_k)$. Therefore with respect to $(\tilde{X}_1, \ldots, \tilde{X}_k, X_{k+1},\ldots,X_n)$ the last column of the matrix $\tilde{A}$ is equal to zero.

If $\dim(\ker A(t))>1$, fix $\bar{t} \in ]a,b[$, then by a continuation argument for the determinant there exists an open neighborhood $U_{\bar{t}}=]\bar{t}-\delta,\bar{t}+\delta[$ and a non vanishing $C^r$ vector field $V(t)$ on $U$ such that $A(t) V(t)=0$. Then $\{U_{t}\}_{t \in [a,b]}$ is an open cover of the compact set $[a,b]$ then there exists a finite sub-cover $U_1,\ldots,U_Q$ such that $U_{\alpha}\cap U_{\beta} \ne \emptyset$ for $\alpha, \beta \in \{1,\ldots,Q\}$, $\alpha< \beta$ if and only if $\beta=\alpha + 1$. Let $\{\psi_{\alpha} \, : \,  \alpha \in  \{1, \ldots,Q\} \}$ be a partition of unity subordinate to the cover $\{U_{\alpha} \ : \ \alpha \in  \{1, \ldots,Q\}\}$  for further details see \cite[Definition 1.8]{warner}. For each $\alpha$ there exists a non vanishing $C^r$ vector fields $V^{\alpha}(t)$ on $U_{\alpha}$ in $\ker A(t)$. When $U_{\alpha} \cap U_{\beta} \ne \emptyset$ we consider $V^{\alpha}(t)$ on $U_{\alpha}$ and $V^{\beta}(t)$ on $U_{\beta}$ in $\ker A(t)$ such that they are linear independent on $U_{\alpha} \cap U_{\beta}$, since the $\dim(\ker A(t))>1$. Then, we set   
\[
X(t)=\sum_{\alpha=1}^Q \psi_{\alpha}(t) V^{\alpha}(t).
\]
Therefore $X(t)$ is a global non vanishing vector field that belongs to $\ker (A(t))$ for all $t \in [a,b]$. Thus we can extend  the global unitary vector 
\[
\tilde{X}_k(t)= \dfrac{X(t)}{|X(t)|}
\]
to an orthonormal basis of the horizontal distribution. The matrix associated to this basis has a vanishing last column. We remove this column and start again until we have dimension $1$. Hence in this new global horizontal basis $(\tilde{X}_1,\ldots, \tilde{X}_k)$ the last $2k-n$ columns of the matrix $\tilde{A}(t)$ vanish and the rank is concentrated in the square matrix $A_1(t)$ given by the first $n-k$ columns.
\end{proof}

\begin{theorem}
\label{thm:intcriterion2}
Let $\ga:I\to N$ be a horizontal curve in a Carnot manifold $(N,\mathcal{H})$ endowed with a Riemannian metric. Assume that $\ga$ is strongly regular in $[a,b]\subset I$. Then every admissible vector field with compact support in $(a,b)$ is integrable.
\end{theorem}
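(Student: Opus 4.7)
The strategy is to run an Implicit Function Theorem argument in the spirit of Theorem~3 of \cite{MR1189496} (see Appendix~\ref{sec:integrability of admissible vector}), but to exploit the strong regularity hypothesis so that the linearized operator becomes pointwise invertible. This will bypass the global holonomy step needed in the general regular case and automatically deliver a compactly supported correction.

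First, I would apply Lemma~\ref{lm:kerA} to obtain an orthonormal adapted basis $(\tilde{X}_1,\ldots,\tilde{X}_k,X_{k+1},\ldots,X_n)$ along $\ga$ for which the admissibility matrix has the block form $\tilde{A}(t)=(\tilde{A}_1(t)\ 0)$ with $\tilde{A}_1(t)$ an invertible $(n-k)\times(n-k)$ matrix for every $t\in[a,b]$. Motivated by this splitting, I would look for the admissible variation in the form
\[
\tilde{\Gamma}_s(t)\;=\;\exp_{\ga(t)}\!\Bigl(sV(t)+\sum_{i=1}^{n-k} y_i(s,t)\,\tilde{X}_i(\ga(t))\Bigr),
\]
so that the correction carries exactly $(n-k)$ scalar unknowns $y=(y_1,\ldots,y_{n-k})$, matching the $(n-k)$ scalar equations encoding the degree-$d$ constraint. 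The defect map
\[
\Phi_r(s,y)(t)\;=\;\escpr{\ptl_t\tilde{\Gamma}_s(t),\,(X_r)_{\tilde{\Gamma}_s(t)}},\qquad r=k+1,\ldots,n,
\]
vanishes at $(s,y(s,\cdot))$ precisely when $\tilde{\Gamma}_s$ has degree $d$.

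Next, I would compute the linearization $D_y\Phi(0,0)$. Viewed as a purely horizontal vector field $\sum_i w_i\tilde{X}_i$, the correction has vanishing vertical component $F\equiv 0$, so both the $F'$ term and the $\tilde{B}F$ term of the admissibility system \eqref{eq:compatibilitycond-2} drop out; moreover, the candidate $w_i'$ contribution in $\nabla_{\ga'}(\sum w_i\tilde{X}_i)$ vanishes upon pairing with $X_r$ because $\tilde{X}_i$ is horizontal and $X_r$ is vertical. Consequently $D_y\Phi(0,0)$ reduces to the purely algebraic pointwise operator $w\mapsto\tilde{A}_1(t)\,w(t)$, which is a linear isomorphism thanks to strong regularity. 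The Implicit Function Theorem in the Banach-space setup of Appendix~\ref{sec:integrability of admissible vector} then yields, for $|s|$ small, a unique smooth family $y(s,\cdot)$ with $y(0,\cdot)=0$ and $\Phi(s,y(s,\cdot))=0$.

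It remains to check that the resulting variation is admissible in the sense of Definition~\ref{def:admissible} and that it integrates $V$. For compact support I would invoke local uniqueness: on any open subinterval $U\subset[a,b]$ on which $V$ vanishes, $y\equiv 0$ trivially satisfies $\Phi(s,\cdot)=0$ on $U$ and hence must coincide with the IFT solution there, giving $\mathrm{supp}\,y(s,\cdot)\subset\mathrm{supp}\,V\Subset(a,b)$. For the variational vector field, differentiating $\Phi(s,y(s,\cdot))=0$ in $s$ at $s=0$ gives $\ptl_s\Phi(0,0)+\tilde{A}_1\cdot\ptl_s y(0,\cdot)=0$; since $V$ is admissible we have $\ptl_s\Phi(0,0)=\text{Ad}(V)=0$, and the invertibility of $\tilde{A}_1$ forces $\ptl_s y(0,\cdot)=0$, so $\ptl_s\tilde{\Gamma}_s|_{s=0}=V$. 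The main technical obstacle is setting up the function spaces correctly: $\Phi$ nominally loses a derivative through the $\ptl_t$, while its linearization at the origin is algebraic, so one has to arrange the Banach spaces (as in the Appendix) so that $D_y\Phi(0,0)$ is an honest isomorphism. Once this is in place, strong regularity does all the heavy lifting via the pointwise invertibility of $\tilde{A}_1$.
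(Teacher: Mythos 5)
Your proposal is correct and follows essentially the same route as the paper's proof: Lemma~\ref{lm:kerA} to normalize $\tilde{A}=(\tilde{A}_1\ \ 0)$, a correction supported on the horizontal fields $\tilde{X}_1,\ldots,\tilde{X}_{n-k}$, the observation that the linearized constraint reduces to the pointwise invertible operator $w\mapsto\tilde{A}_1(t)w(t)$, the Implicit Function Theorem, and the same differentiation argument showing $\partial_s y(0,\cdot)=0$ so that the variational field is $V$. The only difference is cosmetic: the paper carries an additional vertical unknown $Y_1$ in the map $\mathcal{G}$ and then shows it vanishes identically, whereas you omit it from the start.
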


\begin{proof}
Let $J=[a,b]$. The admissibility system is given  by
\begin{equation}
\label{eq:admissibility-1}
F'(t)+B(t)F(t)+A(t)G(t)=0,
\end{equation}
with respect to $(X_i)$.
By hypothesis, the rank of $A(t)\in C^r$ is maximal for all $t\in J$. By Lemma~\ref{lm:kerA} there exists a global basis $(\tilde{X}_i)$ such that
 \[
\tilde{A}(t)=\big(\tilde{A}_1(t)\ \ 0 \big),
\]
where $\tilde{A}_1(t)$ is an invertible square $(n-k)$ matrix.
Then setting $\tilde{F}, \tilde{G}$ the new coordinates with respect to $({\tilde{X}}_i)$ and $\tilde{B}$ as in \eqref{eq:tilde}, by Remark~\ref{rk:systemtilde} we have 
\begin{equation}
\label{eq:admtilde}
\tilde{F}'+\tilde{B}\tilde{F}+\tilde{A}\tilde{G}=0
\end{equation}
Calling
\[
\tilde{G}_1=\begin{pmatrix}
\tilde{g}_1 \\ \vdots \\ \tilde{g}_{n-k}
\end{pmatrix},
\quad
\tilde{G}_2=\begin{pmatrix}
\tilde{g}_{n-k+1}\\ \vdots \\ \tilde{g}_{k}
\end{pmatrix},
\]
the admissibility system \eqref{eq:admtilde} can be written as
\[
\tilde{F}'+\tilde{B}\tilde{F}+\tilde{A}_1\tilde{G}_1=0,
\]
and so
\begin{equation}
\label{eq:admissibility-G1}
\tilde{G}_1=-\tilde{A}_1^{-1}\big(\tilde{F}'+\tilde{B} \tilde{F}\big).
\end{equation}

Now let $\mathcal{H}_1^r(J)$ be the set of horizontal vector fields of class $C^r$  in $J$ that are linear combination of the first $(n-k)$ vectors ${\tilde{X}}_1,\ldots,{\tilde{X}}_{n-k}$, and $\mathcal{H}_2^r(J)$ the set of horizontal vector fields of class $C^r$ in $J$ that are linear combination of the vector fields ${\tilde{X}}_{n-k+1},\ldots,{\tilde{X}}_k$.

We consider the map
\begin{equation}
\label{eq:defG}
\mathcal{G}: \mathcal{V}^{r}(J)\times\mathcal{H}_1^{r-1}(J) \to \mathcal{V}^{r}(J)\times\mathcal{V}^{r-1}(J)
\end{equation}
defined by
\[
\mathcal{G}(Y_1,Y_2)=(Y_1,\mathcal{F}(Y_1+Y_2)).
\]
We recall that, given a vector field $Y$ along a portion of $\ga$, we define the curve $\Gamma(Y)(t)$ by $\exp_{\gamma(t)}(Y(t))$ and we define $\mathcal{F}(Y)$ as the vertical projection of the tangent vector $\Gamma(Y)'$. We consider on each of the spaces appearing in \eqref{eq:defG} the corresponding $||\cdot||_r$ or $||\cdot||_{r-1}$ norm, and in the product one of the classical product norms.

Then
\[
D\mathcal{G}(0,0)(Y_1,Y_2)=(Y_1,D\mathcal{F}(0)(Y_1+Y_2)),
\]
where $D\mathcal{F}(0)Y$ is given by
\[
D\mathcal{F}(0)Y= \sum_{r=k+1}^{n} \left(\tilde{f}_r'(t)+\sum_{i=1}^{n-k} \tilde{a}_{r i} (t) \tilde{g}_i (t)+\sum_{j=k+1}^n \tilde{b}_{r j}(t) f_j(t) \right) X_{r}.
\] 
Observe that $D\mathcal{F}(0)Y=0$ if and only if $Y$ is an admissible vector field, namely $Y$ solves \eqref{eq:admissibility-1}.

\medskip

Our objective now is to prove that the map $D\mathcal{G}(0, 0)$ is an isomorphism of Banach spaces. Indeed suppose that $D\mathcal{G}(0, 0)(Y_1,Y_2)=(0,0)$. This implies that $Y_1$ is  equal to zero. In the coordinates previously described, $Y_1$ to $\tilde{F}$, and $Y_2$ to $\tilde{G}_1$. By the admissibility equation \eqref{eq:admissibility-G1} we have that also $Y_2$ is equal to zero. This proves that $D\mathcal{G}(0, 0)$ is injective. Let us prove now that $D\mathcal{G}(0,0)$ is surjective. Take $(Z_1, Z_2)$, where  $ Z_1 \in  \mathcal{V}^r(J) $, and $ Z_2 \in \mathcal{V}^{r-1}(J)$ we seek $Y_1,Y_2$ such that $D\mathcal{G}(0, 0)(Y_1,Y_2)=(Z_1, Z_2)$. Then $Y_1=Z_1$ and $Y_2$ is obtained by solving the system 
\[
\tilde{G}_1=-\tilde{A}_1^{-1}\big(\tilde{F}'+\tilde{B} \tilde{F} + \tilde{Z} \big),
\]
since $Y_1$ and $Z_2=\sum_{r=k+1} \tilde{z}_r \tilde{X}_r$ is already given. This proves that $D\mathcal{G}(0,0)$ is surjective.

Keeping the above notation for $Y_i, Z_i$, $i=1,2$, we notice that $D\mathcal{G}(0, 0)$ is a continuous map since the identity map is continuous and there exists a constant $K$ such that 
\begin{align*}
\| Z_2 \|_{r-1} &\le K \left( \left\| \dfrac{d}{dt}(Y_1 ) \right\|_{r-1}+ \|Y_1\|_{r-1}+ \|Y_2\|_{r-1} \right)\\
                      &\le K( \| Y_1\|_{r} + \|Y_2\|_{r-1} ).
\end{align*}
Moreover, $D\mathcal{G}(0, 0)$ is an open map since we have 
\begin{align*}
\| Y_2 \|_{r-1} &\le K \left(  \left\| \dfrac{d}{dt}(Z_1 ) \right\|_{r-1}+ \|Z_1\|_{r-1}+ \|Z_2\|_{r-1} \right)\\
                      &\le K ( \| Z_1\|_{r} + \|Z_2\|_{r-1} ).
\end{align*}
This concludes the proof that $D\mathcal{G}(0,0)$ is an isomorphism of Banach spaces.

\medskip

Let us finally consider an admissible vector field $V$ compactly supported on $(a,b)$. We consider the map
\[
\tilde{\mathcal{G}}:(-\eps,\eps)\times \mathcal{V}^{r}(J)\times\mathcal{H}_1^{r-1}(J) \to \mathcal{V}^{r}(J)\times\mathcal{V}^{r-1}(J),
\]
defined by
\[
\tilde{\mathcal{G}}(s,Y_1,Y_2)=(Y_1,F(sV+Y_1+Y_2)).
\]
The map $\tilde{\mathcal{G}}$ is continuous with respect to the product norms (on each factor we put the natural norm, the Euclidean one on the intervals and $||\cdot||_r$ and $||\cdot||_{r-1}$ in the spaces of vectors along $\ga$). Moreover
\[
\tilde{\mathcal{G}}(0,0,0)=(0,0),
\]
since $\ga$ is horizontal. Now we have that
\[
D_2\tilde{\mathcal{G}}(0,0,0)(Y_1,Y_2)=D\mathcal{G}(0,0)(Y_1,Y_2)
\]
is a linear isomorphism. We can apply the Implicit Function Theorem to obtain $\eps>0$ and unique maps
\[
Y_1:(-\eps,\eps) \to \mathcal{V}^{r}(J), \quad Y_2:(-\eps,\eps) \to \mathcal{H}_{1}^{r-1}(J)
\]
such that $\tilde{\mathcal{G}}(s,Y_1(s),Y_2(s))=(0,0,0)$. This implies that $Y_1(s)\equiv 0$, $Y_2(0)=0$ and that
\[
\mathcal{F}(sV+Y_2(s))=0.
\]
Hence the curve $\Ga_s:J\to N$ defined by $\Gamma_s(t)=\exp_{\ga(t)}(sV(t)+Y_2(s)(t))$ is horizontal for $s\in (-\eps,\eps)$. Differentiating the above formula at $s=0$ we obtain 
\[
D\mathcal{F}(0)\left( V+\dfrac{\partial Y_2}{\partial s} (0)\right)=0.
\]
As $V$ is admissible we deduce that
\[
D\mathcal{F}(0)\bigg( \dfrac{\partial Y_2}{\partial s} (0)\bigg) =0.
\]
Since\[
\dfrac{\partial Y_2}{\partial s} (0)=\sum_{h=1}^{n-k} \tilde{f}_{i} \tilde{X}_{i}, \quad \tilde{f}_{i} \in C^{r-1}(J),
\]
we get from by equation \eqref{eq:admissibility-G1}  that $	\tilde{f}_{i} \equiv 0$ for each $h=1,\ldots,n-k$. Therefore it follows that $\tfrac{\partial Y_3}{\partial s} (0)=0$. This way we obtain that the variational vector field of the variation $\Gamma_s$ is
\[
\dfrac{\partial \Gamma_s}{\partial s} \bigg|_{s=0}= V+ \dfrac{\partial Y_2}{\partial s} (0)=V.
\]
The uniqueness property of the Implicit Function Theorem provides $Y_i(s)=0$ for $s\in (-\eps,\eps)$ when $V=0$.
\end{proof}

\section{The first variation formula}
\label{sc:fvf}

Let $\ga:I \to N$ be a curve of degree $d$ in a Carnot manifold endowed with a Riemannian metric. We fix an orthonormal adapted basis $(X_1, \ldots, X_n)$ along the curve. Recall that the length of degree $d$ was computed in \eqref{eq:integral_formula_Ad} as $L_d(\ga,J)=\int_J \theta_d(t)dt$, where the \emph{length density} $\theta_d$ of degree $d$ is given by 
\[
\theta_d(t)=\bigg( \sum_{j=n_{d-1}+1}^{n_d} \escpr{\ga'(t), (X_j)_{\ga(t)}}^2 \bigg)^{\frac{1}{2}}.
\]
In case $\theta_d(t)\neq 0$ for any $t\in I$, we can reparameterize $\ga$ so that the new length density is identically $1$.

Let $\Ga(t,s)$ be an admissible variation of $\ga$ whose variational vector field is given by $V(t)=\tfrac{\ptl\Ga}{\ptl s}(t,0)$. Calling $\theta=\theta_d$, the derivative of the length functional $L_d$ is given by

\begin{align*}
\dfrac{d}{ds}\Big|_{s=0} L_d(\Gamma_s,I)&=\dfrac{d}{ds}\Big|_{s=0} \int_I \bigg( \sum_{j=n_{d-1}+1}^{n_d} \left\langle \dfrac{\Gamma(t,s)}{\partial t}, (X_j)_{\Gamma(t,s)} \right \rangle^2 \bigg)^{\frac{1}{2}} dt\\
&=\sum_{j=n_{d-1}+1}^{n_d} \int_I  \,  \dfrac{\escpr{\ga'(t), X_j}}{\theta(t)}  \,   \dfrac{d}{ds}\Big|_{s=0}  \left\langle \dfrac{\Gamma(t,s)}{\partial t}, (X_j)_{\Gamma(t,s)}\right \rangle \, dt \\
&=\sum_{j=n_{d-1}+1}^{n_d}  \int_I \,   \dfrac{\escpr{\ga'(t), X_j}}{\theta(t)}  \,  (  \escpr{\nabla_{\ga'} V(t), X_j}+\escpr{\ga'(t), \nabla_{V(t)} X_j} ) \, dt.
\end{align*}
Integrating by parts we obtain that
\begin{equation*}
 \int_I \,   \dfrac{\escpr{\ga'(t), X_j}}{\theta(t)}   \escpr{\nabla_{\ga'} V(t), X_j}\,dt=\int_I \,   -   \escpr{ V(t),\nabla_{\ga'} \left(\dfrac{\escpr{\ga'(t), X_j}}{\theta(t)} X_j \right)} \, dt.
\end{equation*}
Since $V=\sum_{i=1}^n \escpr{V,X_i}X_i$ we get
\[
\escpr{\ga'(t),\nabla_V X_j}=\escpr{V,\sum_{i=1}^n\escpr{\ga',\nabla_{X_i}X_j}X_i},
\]
and so we can write
\begin{equation}
\frac{d}{ds}\bigg|_{s=0} L_d(\Ga_s,I)=\int_I\escpr{V,\mathbf{H}}\,dt,
\end{equation}
where
\begin{equation}
\mathbf{H}= \sum_{j=n_{d-1}+1}^{n_d}\bigg(-\nabla_{\ga'}\bigg(\frac{\escpr{\ga'(t),X_j}}{\theta(t)}X_j\bigg)+\sum_{i=1}^n \frac{\escpr{\ga'(t),X_j}}{\theta(t)}\escpr{\ga',\nabla_{X_i}X_j}X_i\bigg).
\end{equation}
Expressing $\ga'=\sum_{\ell=1}^{n_d} \escpr{\ga',X_\ell}X_\ell$, we can rewrite $\mathbf{H }$ as
\begin{equation}
\label{eq:fvf}
\mathbf{H}=\!\!\!\!\sum_{j=n_{d-1}+1}^{n_d}\bigg(- \dfrac{d}{dt}\left(\dfrac{\escpr{\ga'(t), X_j}}{\theta(t)}\right) X_j+\sum_{i=1}^n\sum_{\ell=1}^{n_d}\frac{\escpr{\ga'(t),X_j}\escpr{\ga'(t),X_\ell}}{\theta(t)}\,c_{\ell i}^j X_i\bigg),
\end{equation}
where
\begin{equation}
\label{eq:cellij}
c_{\ell i}^j=-\escpr{\nabla_{X_\ell}X_j,X_i}+\escpr{X_\ell,\nabla_{X_i}X_j}.
\end{equation}
With this preparation, we can compute the first variation formula for the length $L_d$ of degree $d$ for regular curves.

\begin{remark}
Let $m$ be a positive integer. We remind that when $x$ is a vector field in $\rr^m$ with coordinates $x_i$ for $i=1,\ldots,m$, its transpose is the row vector  
$x^T=(x_1,\ldots,x_m)$.
\end{remark}

\begin{theorem}
\label{thm:geod}
Let $\ga: I \to N$ be a curve of degree $d=\deg(\ga)$  such that $\theta_d(t)=1$ for each $t \in I\smallsetminus I_0 $. Assume that the curve $\ga$ is regular  restricted to $[a,b] \subset I \smallsetminus I_0$. Then $\ga$ is a critical point of the length of degree $d$ for any admissible variation if and only if there is a constant vector $k^T \in \rr^{n-n_d}$ such that $\ga$ satisfies along $\ga$ the following differential equation
\begin{equation}
\label{eq:geod}
-\dot{\alpha}^T(t)   + \beta_h^T (t) = \left(k - \left( \int_a^t \beta_v^T (\tau) D^{-1}(\tau) \, d\tau \right)  \right) D(t) A(t),
\end{equation}
where $h_i=\escpr{\ga', X_i}$ for $i=1,\ldots,n_d$, $A(t)$ defined in \eqref{def:AB}, $D(t)$ solving \eqref{eq:homD} and we set
\begin{equation}
\label{def:alphabeta}
\alpha=\begin{pmatrix}
			0\\
			\vdots\\
			0\\
			h_{n_{d-1}+1}\\
			\vdots\\
			h_{n_d}
		\end{pmatrix}, \qquad 
   \beta=\begin{pmatrix} \beta_h \\ \beta_v \end{pmatrix} =\begin{pmatrix}
			\beta_1\\
			\vdots\\
			\beta_{n_d}\\
			\beta_{n_{d+1}}	\\
			\vdots\\
			\beta_{n}
	 	\end{pmatrix},
\end{equation}
with 
\[
\beta_i=\sum_{\ell=1}^{n_d} \sum_{j=n_{d-1}+1}^{n_d} h_j \, h_{\ell}  \, c_{\ell i}^j.
\]
\end{theorem}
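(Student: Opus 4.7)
The plan is to start from the explicit first variation formula derived in \eqref{eq:fvf} and reduce criticality to a pointwise algebraic relation using the structure of the admissibility ODE. Since $\theta_d\equiv 1$ on $[a,b]$, the expression \eqref{eq:fvf} for $\mathbf{H}$ collapses to $\mathbf{H}=\sum_{i=1}^{n}(-\dot{\alpha}_i+\beta_i)X_i$, with the convention $\dot{\alpha}_i=0$ for $i>n_d$, where $\alpha$ and $\beta$ are the vectors of \eqref{def:alphabeta}. Writing an admissible $V=\sum_{i=1}^{n_d}g_iX_i+\sum_{j=n_d+1}^{n}f_jX_j$ in the adapted orthonormal frame and splitting $\beta=(\beta_h,\beta_v)$ accordingly, the first variation becomes
\[
\frac{d}{ds}\bigg|_{s=0}L_d(\Gamma_s,I)=\int_a^b\bigl[(-\dot{\alpha}^T+\beta_h^T)\,G+\beta_v^T F\bigr]\,dt.
\]

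Next, I would eliminate $F$ in favor of the horizontal control $G$. By Hsu's integrability theorem for regular curves, every admissible vector field with compact support in $(a,b)$ is integrable by an admissible variation, so criticality is equivalent to the vanishing of the above integral for every $G\in\mh^{r-1}_0((a,b))$ whose associated $F$ satisfies $F(a)=F(b)=0$. Proposition~\ref{prop:inthol} provides the explicit representation $F(t)=-D(t)^{-1}\int_a^t D(\tau)A(\tau)G(\tau)\,d\tau$, and applying Fubini on the triangle $\{a\le\tau\le t\le b\}$ converts the $\beta_v^T F$ term into a single integral in $G$. The criticality condition then reads
\[
\int_a^b\Bigl[(-\dot{\alpha}^T+\beta_h^T)-\Bigl(\int_t^b\beta_v^T(s)D(s)^{-1}ds\Bigr)D(t)A(t)\Bigr]G(t)\,dt=0,
\]
subject to the linear constraint $\int_a^b D(t)A(t)G(t)\,dt=0$ coming from $F(b)=0$.

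The heart of the argument is a Lagrange multiplier step. By Proposition~\ref{prop:linearlyfull}, regularity of $\ga$ on $[a,b]$ is equivalent to the surjectivity of the map $G\mapsto\int_a^b DAG\,dt$ from $\mh^{r-1}_0((a,b))$ onto $\rr^{n-n_d}$. Surjectivity, combined with the fundamental lemma of the calculus of variations, shows that the above linear functional vanishes on the kernel of the constraint if and only if there exists a constant row vector $\mu\in(\rr^{n-n_d})^*$ with
\[
-\dot{\alpha}^T(t)+\beta_h^T(t)-\Bigl(\int_t^b\beta_v^T(s)D(s)^{-1}ds\Bigr)D(t)A(t)=\mu\,D(t)A(t)
\]
for every $t\in[a,b]$. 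Rewriting $\int_t^b=\int_a^b-\int_a^t$ and defining $k:=\mu+\int_a^b\beta_v^T(s)D(s)^{-1}ds$ puts this identity into exactly the form \eqref{eq:geod}. The converse is immediate: if \eqref{eq:geod} holds, then the integrand pairs with $G$ to a multiple of $DAG$, whose integral over $[a,b]$ vanishes on every admissible $G$ by the endpoint constraint.

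The main obstacle is precisely the Lagrange multiplier step: it requires both the regularity hypothesis, to guarantee that the constraint map $G\mapsto\int DAG$ is surjective so that the dual obstruction is a single finite-dimensional vector $\mu$, and Hsu's integrability theorem, to ensure that every admissible vector field actually corresponds to an admissible variation and thereby enters the criticality condition in the first place. Without either, the equivalence between criticality of $L_d$ and the pointwise Euler-Lagrange equation breaks down.
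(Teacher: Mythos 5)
Your proposal is correct and follows essentially the same route as the paper: the same reduction of the first variation to $\int[(-\dot{\alpha}^T+\beta_h^T)G+\beta_v^TF]\,dt$, the same elimination of $F$ via the representation $F(t)=-D(t)^{-1}\int_a^tDAG\,d\tau$ (you use Fubini where the paper integrates $(DF)'=-DAG$ by parts — an equivalent computation whose output differs from the paper's only by a constant multiple of $DA$, which you correctly absorb into $k$), and the same du Bois-Reymond/Lagrange-multiplier step resting on surjectivity of the holonomy map and Hsu's integrability theorem.
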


\begin{proof}
Fix an adapted basis $(X_i)$ along $\ga$, and consider the admissible vector field
\[
V=\sum_{i=1}^{n_d} g_i X_i + \sum_{j=n_d + 1}^n f_i X_i
\]
solving the admissibility equation \eqref{eq:compmatrix}. Then we have
\begin{equation}
\label{eq:admisswithD}
(DF)'=-D A G.
\end{equation}

Since $\ga$ is regular by hypothesis, Theorem~\ref{thm:intcriterion} implies that $V$ is integrable, and so there exists an admissible variation $\Ga(t,s)$ such that $V(t)=\tfrac{\ptl\Ga}{\ptl s}(t,0)$.
With the notation introduced in \eqref{def:alphabeta} the first variational formula with respect to $(X_i)$ is given by 
\begin{equation}
\label{eq:fvf2}
\dfrac{d}{ds}\Big|_{s=0} L_d(\Gamma_s,I)=\int_I -\dot{\alpha}^T (t) G(t)+ \beta_h^T (t) G(t)+  \beta_v^T (t) F(t) \, dt.
\end{equation}
Since \eqref{eq:admisswithD} holds and $F$ is compactly supported in $[a,b]$ we have
\begin{align*}
\int_I \beta_v^T (t) F(t) \, dt&= \int_I \beta_v^T (t) D^{-1}(t) D(t) F(t) \, dt\\
&= -\int_I  \left( \int_a^t \beta_v^T (\tau) D^{-1}(\tau) \, d\tau \right)  (D F)'(t) \, dt\\
&= \int_I  \left( \int_a^t \beta_v^T (\tau) D^{-1}(\tau) \, d\tau \right)  D(t) A(t) G(t)\, dt.
\end{align*}
Therefore \eqref{eq:fvf2} is equivalent to 
\begin{equation}
\label{eq:fvf3}
\int_I \left( -\dot{\alpha}^T(t) + \left( \int_a^t \beta_v^T (\tau) D^{-1}(\tau) \, d\tau \right)  D(t) A(t)+ \beta_h^T (t)  \right) G(t) dt,
\end{equation}
for each $G(t)$ that verifies
\[
F(b)=-D(b)^{-1} \int_a^b D(t) A(t) G(t) dt =0.
\]
Hence a critical point of the functional $L_d$ is given by 
\[
\int_I \left( -\dot{\alpha}^T(t)+  \left( \int_a^t \beta_v^T (\tau) D^{-1}(\tau) \, d\tau \right)  D(t) A(t)+ \beta_h^T (t)  \right) G(t) dt=0,
\]
for each $G$ satisfying
\[
H^{a,b}_{\ga}(G)=D(b)^{-1} \int_a^b D(t) A(t) G(t) dt =0.
\]
Since the holonomy map is surjective by the du Bois-Reymond Lemma  \cite[Lemma C.1]{MR1189496}  there exists a constant vector $\tilde{k}^T \in \rr^{n-n_d}$ such that the Euler-Lagrange equation is given by 
\begin{equation}
\label{eq:geod2}
-\dot{\alpha}^T(t)+\left( \int_a^t \beta_v^T (\tau) D^{-1}(\tau) \, d\tau \right)  D(t) A(t)+ \beta_h^T (t) = k \ D(b)^{-1} D(t) A(t).
\end{equation}
Since $D(b)^{-1}$ is a constant matrix we have that  $k=\tilde{k} D(b)^{-1}$ is a constant row vector. Hence $\ga$ satisfies equation \eqref{eq:geod}.

Conversely, assume that equation \eqref{eq:geod} holds so than also \eqref{eq:geod2} holds. Putting equation \eqref{eq:geod2} into \eqref{eq:fvf3} we obtain 
\begin{equation*}
\dfrac{d}{ds}\Big|_{s=0} L_d(\Gamma_s,I)=\int_I k \ D(t) A(t)G(t) dt= k  \int_I D(t) A(t)G(t) dt.
\end{equation*}
Since \eqref{eq:admisswithD} holds and $F$ is compactly supported in $[a,b]$ we conclude
\[
\dfrac{d}{ds}\Big|_{s=0} L_d(\Gamma_s,I)=k \ \left( D(t) F(t) \right)\Big|_{t=a}^{t=b}=0.
\]
Hence we proved that $\ga$ is a critical point of the length functional of degree $d$ for any admissible variation.
\end{proof}

\begin{example}[The Heisenberg group $\hh^n$]
\label{ex:Heisenberg}
A well-known example of contact sub-Riemannian manifold (see Example \ref{ex:regularcontact})  is the Heisenberg group $\mathbb{H}^n$, defined as $\rr^{2n+1}$ endowed with the contact form 
\[
\omega_0=dt+\sum_{i=1}^n (x_i dy_i-y_i dx_i).
\]
Moreover $\mathbb{H}^n$ is a Lie group $(\rr^{2n+1},*)$   where the product is defined, for any pair of points $(z,t)=(z_1,\ldots,z_{n},t)$, $(z',t')=(z_1',\ldots,z_{n}',t')$ in $\rr^{2n+1}=\cc^{2n}\times \rr$, by 
\[
(z,t)*(z',t')=\Bigg(z+z',t+t'+\sum_{i=1}^n \text{Im}(z_i\bar{z_i}')\Bigg).
\]
A basis of left invariant vector fields is given by $\{X_1,\cdots,X_n,Y_1, \cdots,Y_n,T\}$, where
\[
X_i=\dfrac{\partial}{\partial {x_i}}-\dfrac{y_i}{2}\dfrac{\partial}{\partial_t}, \quad
Y_i=\dfrac{\partial}{\partial {y_i}}+\dfrac{x_i}{2}\dfrac{\partial}{\partial_t} \quad i=1,\ldots,n, \quad 
T=\dfrac{\partial}{\partial_t}.
\]
The only non-trivial relation is $[X_i,Y_i]=T$. Here the horizontal metric $h$ is the one that makes 
$\{X_i,Y_i : i=1,\cdots,n\}$ an orthonormal basis of $\mathcal{H}=\text{ker}(\omega_0)$. On the tangent bundle we consider the metric $g=\escpr{\cdot, \cdot}$ so that \eqref{eq:contactmetric} holds. Clearly, we have $\escpr{X_i,T}=\escpr{Y_i,T}=0$ for all $i=1,\ldots,n$. Let $\nabla$ be the Levi-Civita connection associated to $g$. From Koszul formula and the Lie bracket relations we get
\begin{equation}
\label{eq:kozul}
\nabla_{X_i} X_j = \nabla_{Y_i} Y_j = \nabla_T T= 0, \quad \nabla_{X_i} Y_j = -\delta_{ij} T, \quad 
\nabla_{Y_i} X_j = \delta_{ij} T\\
\end{equation}
For any vector field $X$ on $\hh^n$ we have $J(X)=\nabla_X T$. Following the previous notation we set $X_{n+i}:=Y_{i}$ for all $i=1,\ldots,n$ and $X_{2n+1}:=T$, then the only non-trivial structure constants are 
\begin{equation}
\label{eq:strconh}
c_{i \, n+i}^{2n+1}= -c_{n+i \, i}^{2n+1}=\escpr{[X_i, X_{i+n}], X_{2n+1}}=1,
\end{equation}
for all $i=1,\ldots,n$. 

Let $\ga:I \to \hh^n$ be an horizontal curve parameterized by arc length, i.e. $\theta(t)=1$.  By equation \eqref{eq:kozul} and the linearity of $\nabla$ on the first term we have $\escpr{\ga', \nabla_{\ga'} X_j}=0$ thus it holds
\[
\dot{\alpha}_{j}= \escpr{\nabla_{\ga'} \ga',X_j}  +\escpr{\ga', \nabla_{\ga'} X_j}= \escpr{\nabla_{\ga'} \ga',X_j},
\]
where $\alpha$ is defined in Theorem \ref{thm:geod}.
Since $\escpr{\nabla_{\ga'} \ga', T}=-\escpr{\ga', J(\ga')}=0$, then $\nabla_{\ga'} \ga'$ is horizontal. By equation \eqref{eq:strconh} we have $c_{\ell i}^j=0$ for all $j=1,\ldots,2n$, thus we deduce that 
\[
\beta_i=\sum_{\ell=1}^{2n} \sum_{j=1}^{2n} \escpr{\ga',X_j} \escpr{\ga',X_{\ell}} \, c_{\ell i}^j=0
\]
for all $i=1,\ldots,2n+1$. In this setting we have 
\[
B=\escpr{[\ga',T],T}=0, \quad A=(a_1,\ldots,a_{2n}),
\]
where $a_i=2\escpr{J(\ga'),X_i}$. Since the solution of following Cauchy problem
\[
\begin{cases}
D'(t)=D(t)B(t)=0\\
D(a)=1
\end{cases}
\]
is given by $D(t)=1$ for all $t \in [a,b]$, the right side term of \eqref{eq:geod} is given by $2 k J(\ga')$. Then  we conclude  that $\ga$ is a critical point of the horizontal length functional for any admissible variation if and only if there exists a constant $k \in \rr$ such that 
\begin{equation}
 -\nabla_{\ga'} \ga'=2 k J(\ga').
\end{equation}
Explicit solutions to this geodesic equation can be find in \cite[p. 10 ]{RitoreRosales1}, \cite[p. 160]{Monti2} and in \cite[p. 28]{Bellaiche}.

Let now $\ga: I \to \hh^n$ be a curve such that $\deg(\ga)=2$. We parametrize the curve $\ga$ so that  the  length density $\theta_2(t)=\escpr{\ga',T}=1$ for all $t \in I \smallsetminus I_0$. Since $\deg(\ga)=2$ is the maximal degree for a curve in Heisenberg the vertical set $\mathcal{V}_{\ga(t)}=\{0\}$ for all $t\in I \smallsetminus I_0$. Then $\ga$ is regular restricted to each interval $[a,b] \subset  I \smallsetminus I_0$. Therefore we have that $k=D=A=0 $ and 
\[
\alpha=\left(\begin{array}{c}
			0\\
			\vdots\\
			0\\
			1
		\end{array}
 \right),
 \quad 
\beta_i=\sum_{\ell=1}^{2n+1}  h_{2n+1} \, h_{\ell}  \, c_{\ell i}^{2n+1}.
 \]
Thus we have $\beta_i=-\escpr{\ga',Y_i}$ for $i=1, \ldots,n$, $\beta_i=\escpr{\ga',X_i}$ for $i=n+1, \ldots,2n$ and $\beta_{2n+1}=0$. We deduce $\beta=J(\ga')$. Hence the geodesic equation \eqref{eq:geod} is given by 
$
J(\ga')=0,
$
then $\ga'=T$. We conclude that the geodesics of degree $2$ are straight lines in direction $\partial_t$.
\end{example}

\subsection{Some properties of the length functional of degree two for surfaces immersed in the Heisenberg group}
Let  $\Sigma$ be a surface immersed in the Heisenberg group $\mathbb{H}^1$, where a basis of left-invariant vector fields is given by 
\[
 X=\partial_x+\dfrac{y}{2}\partial_t, \quad Y=\partial_y-\dfrac{x}{2}\partial_t, \quad T=\partial_t.
\]
We consider the ambient metric $g=\escpr{\cdot, \cdot}$ that makes $(X,Y,T)$ an orthonormal basis, see Example~\ref{ex:heis1}, and $\mh=\text{span}\{X,Y\}$. $\Sigma$  inherits the Riemannian metric $\bar{g}$ induced by $g$. Hence $(\Sigma,\tilde{\mh}^1, \tilde{\mh}^2)$ is a graded manifold endowed with the Riemannian metric $\bar{g}$, where $\tilde{\mh}_p^1=T_p {\Sigma}\cap \mh_p$, $\tilde{\mh}_p^2=T_p \Sigma$ if $p$ belongs to $\Sigma\smallsetminus \Sigma_0$ and $\tilde{\mh}_p^1=\tilde{\mh}_p^2=\mh_p=T_p \Sigma$ for $p\in \Sigma_0$.
Let $N$ be a unit vector normal to $\Sigma$ w.r.t. $g$ and 
$
 N_h=N-\langle N,T \rangle T
$
its orthogonal projection onto $\mh$.  In the regular part $\Sigma \smallsetminus \Sigma_0$, the horizontal Gauss map $\nu_h$ and the characteristic vector field $Z$ are defined by
\begin{equation}
\label{vectors nu Z}
  \nu_h= \dfrac{N_h}{|N_h|}, \quad Z=J(\nu_h),
\end{equation}
where $J(X)=Y$, $J(Y)=-X$ and $J(T)=0$. Clearly $Z$ is horizontal and orthogonal to $\nu_h$ then it is tangent to $\Sigma$.
If we define 
\begin{equation}
\label{vector S}
S=\langle N,T \rangle \nu_h-|N_h|T,
\end{equation}
then $(Z_p,S_p)=(e_1,e_2)$ is an orthogonal basis of $T_p \Sigma$ and it is adapted to the filtration $\mathcal{H}_p^1\cap T_p \Sigma \subset \mathcal{H}_p^2\cap T_p \Sigma$ for each $p$ in $\Sigma\smallsetminus \Sigma_0$.

In the regular part $\Sigma \smallsetminus \Sigma_0$ the length functional $L_2$ is well-defined. Since all variation $\Gamma_s$ are admissible and the first variation formula is given by 
\begin{equation}
\frac{d}{ds}\bigg|_{s=0}\L_2(\Ga_s,I)=\int_I\escpr{V,\mathbf{H}}\,dt,
\end{equation}
where $V$ is a vector field in $T\Sigma$ and $\mathbf{H}$ is given by equation \eqref{eq:fvf}. Then the Euler-Lagrange equation for $L_2$ is given by $\mathbf{H}=0$. Following equation \eqref{eq:fvf}, $\mathbf{H}=0$ is equivalent to 
\begin{equation}
\label{eq:geod2}
- \dfrac{d}{dt}\left(\dfrac{\escpr{\ga'(t), e_2}}{|\escpr{\ga'(t), e_2}|}\right) e_2+ \sum_{i=1}^2 \sum_{\ell=1}^2 \left( \frac{\escpr{\ga'(t),e_2} \escpr{\ga'(t),e_\ell}}{\escpr{\ga'(t),e_2} }\,c_{\ell i}^j\right) e_i=0.
\end{equation}
Then a straightforward computation shows that \eqref{eq:geod2} is equivalent to
\[
\left(\escpr{\ga'(t),e_1} c_{2 1}^2 \right) e_1+ \left(\escpr{\ga'(t),e_1} c_{1 2}^2  \right) e_2 =0
\]
This means that the geodesic equation for $L_2$ is given by 
\begin{equation}
\label{eq:geod2a}
\escpr{\ga'(t), Z} \escpr{ [S,Z] , S }=0.
\end{equation}
Whenever $\escpr{ [S,Z] , S }\ne0$ the unique geodesic for $L_2$ starting from $p$ is the integral curve of the vector field $S$ passing through $p$, namely the unique solution of the following Cauchy problem 
\[
\begin{cases}
 \ga'(t)=S_{\ga(t)}\\
 \ga(0)=p.
 \end{cases}
\]
The projection of the integral curve of $S$ onto the $xy$-plane are called  \emph{seed curves} in the literature, see for instance \cite[page 159]{MR2312336}.

\begin{example}
A vertical plane $P_v$  in $\mathbb{H}^1$ is given by 
\[
P_v=\{(x,y,t) \in \mathbb{H}^1 \ : \ ax+by=c , \ a^2 + b^2 =1, \ c \in \rr\}.
\]
It is easy to see that the $Z=bX-aY$ and $S=T$. Thus on a vertical plane  we always have $\escpr{ [T,Z] , T }=0$, since the Lie algebra of the Heisenberg group is nilpotent. More generally each surface obtained by the product of a planar curve in the $xy$-plane with $\rr$ in the $t$ direction (see \cite[Example 3.4]{MR3794892}) verifies $\escpr{ [S,Z] , S }=0$.  Therefore all curves in $P_v$ satisfy the geodesic equation \eqref{eq:geod2a}.
Now for seek of simplicity in the computation we consider the vertical plane $\{y=0\}$. This is not restrictive since a generic vertical plane $P_v$ can be obtained by a rotation and a left-translation of $\{y=0\}$. The length functional of degree $2$ is given by 
\[
L_2(\ga)=\int_a^b |\escpr{\ga'(s),T}| ds,
\]
where $\ga(s)=(x(s),t(s))$ is a piecewise $C^1$ curve in $P_v=\{y=0\}$. Let $p=(x_0,t_0)$ and $q=(x_1,t_1)$ be two points in $P_v$. We consider the piecewise curve  $\alpha(s): [0,2] \to P_v$   defined by 
\[
\alpha(s)=
\begin{cases}
\alpha_1(s)=\left(x_0, s\ t_1+ (1-s)t_0 \right)& \text{if} \quad s \in [0,1]\\
\alpha_0(s)= \left( (s-1) x_1 + (2-s) x_0, t_1\right)& \text{if} \quad s \in [1,2].
\end{cases}
\]
We claim that $\alpha(t)$ is a minimizing curve for the length functional of degree $2$, that means  $L_2(\alpha) \le L_2(\ga)$ for each curve $\ga:[a,b] \to P_v$,   $\ga(s)=(x(s),t(s))$ such that $\ga(a)=(x_0,t_0)$ and $\ga(b)=(x_1,t_1)$. Indeed, defining  the following  function 
\[
f:[a,b] \to \rr, \qquad f(s)=\escpr{t(s)-t_0, t_1- t_0},
\]
we have $f(a)=0 $ and $f(b)=|t_1- t_0|^2$. Then, it holds 
\begin{equation}
\label{eq:fb}
f(b)=f(b)-f(a)= \int_a^b f'(s) ds.
\end{equation}
By Cauchy-Schwarz inequality and \eqref{eq:fb} we obtain 
\begin{align*}
|t_1-t_0|^2= \left| \int_a^b f'(s) ds \right| \le \int_a^b |f'(s)| ds &\le  \int_a^b | \escpr{t'(s), t_1- t_0}| ds\\
 &\le |t_1-t_0| \int_a^b |t'(s)| ds.
\end{align*}
Then, it follows 
\begin{equation}
\label{eq:min}
|t_1-t_0| \le \int_a^b |t'(s)| ds=\int_a^b \sqrt{\escpr{\ga'(s),T}^2} ds=L_2(\ga).
\end{equation}
Since $L_2(\alpha_0)=0$ we have $L_2(\alpha)=L_2(\alpha_1)=|t_1-t_0|$. By equation \eqref{eq:min} we conclude $L_2(\alpha) \le L_2(\ga)$. However $L_2$ has several minimum among all curves that fix that end-points $p$ and $q$, because each curve of degree $2$ that has increasing $t$ coordinate is a minimum for the for the length functional $L_2$. Indeed, when we reach the horizontal leaf of coordinates $t_1$ we can connect each point on the leaf leaving unchanged the value of $L_2$.
\end{example}

\begin{example}[Characteristic plane]
Let $P_c$ be the characteristic (or horizontal) plane in $\mathbb{H}^1$ defined by
\[
P_c=\{(x,y,t) \in \mathbb{H}^1 \ : \ t=0\}.
\]
In cylindrical coordinate s$x=\rho \cos(\theta)$, $y=\rho \sin(\theta)$ and $t=t$, where $\rho>0$ and $\theta \in [0,2\pi]$, we consider the orthonormal adapted basis $(X',Y',T)$ in $\mathbb{H}^1$, where 
\begin{equation}
\begin{aligned}
X'&=\cos(\theta) X + \sin(\theta) Y= \dfrac{\partial }{\partial \rho},\\
Y'&=-\sin(\theta) X + \cos(\theta)Y=\dfrac{1}{\rho} \frac{\partial }{\partial \theta}+ \frac{\rho}{2} \frac{\partial}{\partial t}.\\
\end{aligned}
\end{equation}
Furthermore, since the tangent vector to $P_c$ are $\frac{\partial }{\partial \rho}$ and $$\frac{\partial }{\partial \theta}= \rho Y' - \frac{\rho^2}{2}T,$$ 
we deduce   
\[
Z=\frac{\partial }{\partial \rho} \quad \text{and} \quad S=\dfrac{1}{\rho \sqrt{1+\frac{\rho^2}{4}}} \frac{\partial }{\partial \theta}.
\]
Since $\escpr{[Z,S],S}=\frac{\partial }{\partial \rho}\left((1+\frac{\rho^2}{4})^{-\frac{1}{2}} \right) \ne0$ for all $\rho>0$ the geodesics for $L_2$ are integral curves of $S$. Let $\ga(t):[0,\bar{t}] \to \rr^2\setminus \{0\}$ be the integral closed curve of $S$ such that $\ga(0)=(\rho_0,\theta_0) $ and $\ga(\bar{t})=(\rho_0,\theta_0)$, $\rho_0>0$, $\theta_0 \in [0,2 \pi]$ and $|\cdot|$  the Euclidean metric. When $\rho_0$ tends to $0$ the circle described by $\ga$ collapses to the characteristic point $0$ and we have
\[
 \lim_{\rho_0 \to0} \int_{I} |\ga'(t)| dt =\lim_{\rho_0 \to 0} \dfrac{2 \pi \rho_0}{\rho_0 \sqrt{1+\frac{\rho_0^2}{4}}}= 2\pi.
\]
\end{example}

\begin{example}[Pansu's spheres]
In cylindrical coordinates the Pansu sphere $\mathbb{S}_1$ is the union of the graphs of the functions $f$ and $-f$ defined on the plane $t=0$, where for $0<\rho\le1$ 
\[
f(\rho,\theta)=\frac{1}{2} \left( \rho \sqrt{1-\rho^2}+ \cos^{-1}(\rho) \right).
\]
Then, for $0 < \rho <1$ we have 
\[
\frac{\partial f}{\partial \rho}=-\frac{\rho^2}{\sqrt{1-\rho^2}} \quad \text{and} \quad \frac{\partial f}{\partial \theta}=0.
\]
Therefore the unit normal $N$ to the upper (lower) hemisphere, described by the graph $f$ (respectively $-f$ ), is 
\[
N=\dfrac{1}{\sqrt{1-\rho^2}}\left(T\pm\frac{\rho^2}{\sqrt{1-\rho^2}} X'\right).
\]
Thus, we have $\nu_h=\pm X'$, $Z=\pm Y'$, $\escpr{N,T}=(1-\rho^2)^{-\frac{1}{2}}$ and 
\[
N_h=\pm \frac{\rho^2}{1-\rho^2} X' \quad \text{and} \quad S=\frac{1}{\sqrt{1-\rho^2}}X' \pm \frac{\rho^2}{1-\rho^2}T.
\]
A straightforward computation shows that for $0<\rho<1$ there holds 
\begin{equation}
\begin{aligned}
\escpr{[Z,S], S}&=\frac{1}{\sqrt{1-\rho^2} }\escpr{[Y' ,X'], S}\\
&=\frac{1}{1-\rho^2 }\escpr{\frac{1}{\rho}Y'+ T , X' +\frac{\rho^2}{\sqrt{1-\rho^2}}T}\\&
= \rho^2 (1-\rho^2)^{-\frac{3}{2}}\ne0.
\end{aligned}
\label{eq:c122}
\end{equation}
On the equator of $\mathbb{S}_1$, parametrized by $ \theta \to (1,\theta,0)$, we have $\escpr{[Z,S], S}=0$ since $T$ is tangent to $\mathbb{S}_1$. Fix a point $p_0=(\rho_0,\theta_0, \pm f(\rho_0,\theta_0)) \in \mathbb{S}_1 $ with $0<\rho<1$. By \eqref{eq:c122} out of the equator $\escpr{[Z,S], S}\ne0$, then the geodesic $\ga(s)=(\rho(s), \theta(s),  t(s)=f(\rho(s),\theta(s)))$ at $p_0$ for the $L_2$ functional is the solution of the following Cauchy problem 
\begin{equation}
\label{eq:geodpansu}
\begin{cases}
\dot{\rho}(s)= \frac{1}{\sqrt{1-\rho(s)^2}}\\
\dot{\theta}(s)=0\\
\dot{t}(s)=\mp \frac{\rho(s)^2}{1-\rho(s)^2}\\
\ga(0)=(\rho_0,\theta_0, \pm f(\rho_0,\theta_0)).
\end{cases}
\end{equation}
Then we have $ \theta(s)=\theta_0$ and $\rho(s)$ verifies 
\begin{equation}
\label{eq:pararc}
\frac{1}{2}\left[y \sqrt{1-y^2} + \sin^{-1}(y)\right]_{\rho_0}^{\rho(s)}=s.
\end{equation}
Moreover, we notice that the sign of $\dot{t}(s)$ is opposite to the sign of $f$. This means that the $t$ coordinate of a geodesic $\ga(s)$ decreases in the upper hemisphere and increases in lower hemisphere until $\ga$ reaches the equator. Then for each $\rho_0>0$ and $\theta_0 \in [0,2\pi]$ we consider $\ga_u:[0,\bar{s}] \to \mathbb{S}_1$ the solution of \eqref{eq:geodpansu}  such that  
$\ga_u(0)=(\rho, \theta, f(\rho))$, $\ga_u(\bar{s})=(1,\theta_0,0)$  and $\ga_{\ell}: [0,\bar{s}] \to \mathbb{S}_1$ the solution of \eqref{eq:geodpansu}  such that 
$\ga_u(0)=(\rho, \theta, -f(\rho))$, $\ga_{\ell}(\bar{s})=(1,\theta_0,0)$. Letting $I=[0,2\bar{s}]$ we set 
\[
\alpha(s)=
\begin{cases}
\ga_u (s) & s \in [0, \bar{s}] \\
\ga_{\ell}(2 \bar{s}- s) & s \in [\bar{s}, 2\bar{s}]
 \end{cases}
\]
When $\rho_0$ tends to $0$ the end-points of $\alpha$ go to the poles that are characteristic points and we have
\[
\lim_{\rho_0 \to0} \int_{I} |\alpha'(t)| dt =\lim_{\rho_0 \to 0}  2 \bar{s}= \frac{\pi}{2},
\]
since by \eqref{eq:pararc} it follows 
\[
\bar{s}=\lim_{\rho_0 \to 0} \lim_{\rho \to 1}\frac{1}{2}\left[y \sqrt{1-y^2} + \sin^{-1}(y)\right]_{\rho_0}^{\rho(s)}=\frac{\pi}{4}.
\]
\end{example}

\appendix

\section{Integrability of admissible vector fields on a regular curve}

\label{sec:integrability of admissible vector}
In this Appendix, we provide an alternative proof of the fundamental Theorem~3 in Hsu's paper \cite{MR1189496}, that implies that, when $\gamma$ is a regular curve in $(a,b)$, then any admissible vector field along $\gamma$ with compact support in $(a,b)$ is integrable. We need first some preliminary results.

We consider the following spaces
\begin{enumerate}
\item $\frak{X}^r_\ga(a)$, $r\ge 0$, is the set of $C^r$ vector fields along $\ga$ that vanish at $a$.
\item $\mathcal{H}_\ga^r(a)$, $r\ge 0$, is the set of horizontal $C^r$ vector fields along $\ga$ vanishing at $a$.
\item $\mathcal{V}_\ga^r(a)$, $r\ge 0$, is the set of vertical vector fields of class $C^r$ along $\ga$ vanishing at $a$. By a vertical vector we mean a vector in $\mathcal{H}^\perp$.
\end{enumerate}
We shall denote by $\Pi_v$ the orthogonal projection over the vertical subspace.

For $r\ge 1$, we consider the map
\begin{equation}
\label{eq:defG}
\mathcal{G}:\mathcal{H}_\ga^{r-1}(a)\times \mathcal{V}_\ga^r(a)\to \mathcal{H}_\ga^{r-1}(a)\times \mathcal{V}_\ga^{r-1}(a),
\end{equation}
defined by
\[
\mathcal{G}(Y_1,Y_2)=(Y_1,\mathcal{F}(Y_1+Y_2)),
\]
where $\mathcal{F}(Y)=\Pi_v(\Gamma(Y)')$, and $\Gamma(Y)(t)=\exp_{\gamma(t)}(Y(t))$. Observe that $\mathcal{F}(Y)=0$ if and only if the curve $\Gamma(Y)$ is horizontal.

We consider on each space the corresponding $||\cdot||_r$ or $||\cdot||_{r-1}$ norm, and the corresponding product norm (it does not matter whether it is Euclidean, the sup or the $1$ norm).

Then
\[
D\mathcal{G}(0,0)(Y_1,Y_2)=(Y_1,D\mathcal{F}(0)(Y_1+Y_2)),
\]
where $D\mathcal{F}(0)Y$ is given by
\[
D\mathcal{F}(0)Y=\sum_{i=k+1}^n \big(\escpr{\nabla_{\gamma'}Y,X_i}+\escpr{\gamma',\nabla_Y X_i}\big)\,X_i.
\] 
Oberve that $D\mathcal{F}(0)Y=0$ if and only if $Y$ is an admissible vector field.

Our objective now is to prove that the map $D\mathcal{G}(0,0)$ is an isomorphism of Banach spaces. To show this, we shall need the following result.

\begin{proposition}
The differential $D\mathcal{G}(0,0)$ is an isomorphism of Banach spaces.
\end{proposition}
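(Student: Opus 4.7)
The plan is to exploit the lower-triangular structure of $D\mathcal{G}(0,0)$. Writing $D\mathcal{G}(0,0)(Y_1,Y_2)=(Y_1,\,D\mathcal{F}(0)(Y_1+Y_2))$, the first slot is already the identity on $\mathcal{H}_\ga^{r-1}(a)$, so it suffices to prove that, for each fixed $Y_1\in\mathcal{H}_\ga^{r-1}(a)$, the map $L_{Y_1}:\mathcal{V}_\ga^r(a)\to\mathcal{V}_\ga^{r-1}(a)$ defined by $L_{Y_1}(Y_2)=D\mathcal{F}(0)(Y_1+Y_2)$ is a topological linear isomorphism, with bound independent of $Y_1$. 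Once that is done, the inverse of $D\mathcal{G}(0,0)$ is $(Z_1,Z_2)\mapsto(Z_1,L_{Z_1}^{-1}(Z_2))$, and linearity together with the joint estimate yields continuity.

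Second, I would translate $L_{Y_1}Y_2=Z_2$ into the matrix form used in Section~\ref{sc:structure}. Choose an adapted orthonormal basis $(X_i)$ extended along $\gamma$, and write $Y_1=\sum_{i\le k}g_iX_i$, $Y_2=\sum_{j>k}f_jX_j$, $Z_2=\sum_{r>k}z_rX_r$, and use the column vectors $F,G$ from \eqref{def:FG}. Exactly as in the derivation of \eqref{eq:compatibilitycond-2}, the equation $D\mathcal{F}(0)(Y_1+Y_2)=Z_2$ becomes the linear system
\begin{equation*}
F'(t)+B(t)F(t)+A(t)G(t)=Z(t),\qquad F(a)=0,
\end{equation*}
where $Z\in C^{r-1}$ collects the coordinates of $Z_2$. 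This is an inhomogeneous linear first-order ODE in $F$ on $[a,b]$; by the standard existence--uniqueness theorem, there is exactly one continuous solution, and the variation-of-constants formula built from the fundamental matrix $D(t)$ of Lemma~\ref{lm:det} gives an explicit expression
\begin{equation*}
F(t)=D(t)^{-1}\!\int_a^t D(s)\bigl(Z(s)-A(s)G(s)\bigr)\,ds.
\end{equation*}
Injectivity of $L_{Y_1}$ is the case $Y_1=0,\,Z=0$: then $F'+BF=0$ with $F(a)=0$ forces $F\equiv 0$. Surjectivity is precisely the existence of the above $F$.

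Third, I would check the regularity and norm estimates. Since $Z\in C^{r-1}$ and $A,B,D,D^{-1}$ are smooth along $\gamma$, the integrand above is $C^{r-1}$, so $F$ is $C^r$ and $F\in\mathcal{V}_\ga^r(a)$. The identity $F'=-BF-AG+Z$ together with a Gr\"onwall estimate yields
\begin{equation*}
\|F\|_\infty\le C_0\bigl(\|Z\|_\infty+\|G\|_\infty\bigr),
\end{equation*}
and then successive differentiation of the ODE bootstraps to bounds of the form $\|F\|_r\le C\bigl(\|Z\|_{r-1}+\|G\|_{r-1}\bigr)$, with $C$ depending only on $\gamma$, $[a,b]$, and the adapted basis. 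This shows both the boundedness of $L_{Y_1}^{-1}$ and, combined with the trivial bound on the first component, the boundedness of $D\mathcal{G}(0,0)^{-1}$. Boundedness of $D\mathcal{G}(0,0)$ itself is immediate from the explicit formula for $D\mathcal{F}(0)$ and the continuous embedding $\|\cdot\|_{r-1}\le\|\cdot\|_r$ on $\mathcal{V}_\ga^r(a)$.

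The main conceptual point is that, unlike the integration of admissible fields with compact support in $(a,b)$ (which requires the holonomy/regularity hypothesis to close the right endpoint), the statement here is purely local in the initial condition $F(a)=0$, so it reduces cleanly to solvability of a linear Cauchy problem. The one technical nuisance I expect is keeping the $C^r$ vs.\ $C^{r-1}$ bookkeeping consistent across the product norms to confirm that $D\mathcal{G}(0,0)$ maps into the stated codomain and that its inverse is bounded; these are routine but must be written down carefully to justify the subsequent appeal to the Banach-space Implicit Function Theorem.
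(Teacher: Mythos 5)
Your proposal is correct and follows essentially the same route as the paper: both reduce the statement to the linear Cauchy problem $F'+BF+AG=Z$, $F(a)=0$, obtaining injectivity from uniqueness of the homogeneous problem, surjectivity from existence for the inhomogeneous one (the paper's Lemma~\ref{lem:existY_hY_v}), and boundedness of the inverse from a Gr\"onwall-plus-bootstrap estimate $\|F\|_r\le C(\|Z\|_{r-1}+\|G\|_{r-1})$ (the paper's Lemmas~\ref{lem:odeestimate} and~\ref{lm:ODE ineq}). The only quibble is terminological: your $L_{Y_1}$ is affine rather than linear for $Y_1\neq 0$, but since the triangular structure of $D\mathcal{G}(0,0)$ is handled exactly as you describe, this does not affect the argument.
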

\begin{proof}
We first observe that $D\mathcal{G}(0,0)$ is injective, since $D\mathcal{G}(0,0)(Y_1,Y_2)=(0,0)$ implies that $Y_1=0$ and that the vertical vector field $Y_2$ satisfies the compatibility equations with initial condition $Y_2(a)=0$. Hence $Y_2=0$. The map $D\mathcal{G}(0,0)$ is continuous.
Indeed, if for instance we consider the $1$-norm on the product space we have
\begin{align*}
 \| D\mathcal{G}(0,0)(Y_1, Y_2) \|&= \| (Y_1,D\mathcal{F}(0)(Y_1+Y_2)) \|\\
                        &\le \| Y_1 \|_{r-1} + \|D\mathcal{F}(0)(Y_1+Y_2)) \|_{r-1}\\
                        &\le (1+ \|(a_{ij}) \|_{r-1})\|Y_1\|_{r-1}+ (1+\|(b_{ij}) \|_{r-1})\|Y_2 \|_{r}.\\                        
\end{align*} 

To show that $D\mathcal{G}(0,0)$ is surjective, we take $(Y_1,Y_2)$ in the image, and we find a vector field $Y$ along $\gamma$ such that $Y(a)=0$, $Y_h=Y_1$ and $D\mathcal{F}(0)(Y)=Y_2$ by  Lemma~\ref{lem:existY_hY_v}. The map $D\mathcal{G}(0,0)$ is open because of the estimate \eqref{eq:DGopen} given in Lemma~\ref{lem:odeestimate} below.
\end{proof}

\begin{lemma}
\label{lem:odeestimate}
In the above conditions, assume that $D\mathcal{F}(0)(Y)=Y_2$ and $Y_h=Y_1$ and $Y(a)=0$. Then there exists a constant $K$ such that 
\begin{equation}
\label{eq:DGopen}
 \| Y_v \|_{r} \le K (\|  Y_2 \|_{r-1}+ \| Y_1  \|_{r-1})
\end{equation}
\end{lemma}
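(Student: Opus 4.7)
The plan is to translate the hypothesis into the linear ODE of Lemma~\ref{lem:existY_hY_v} and then combine a base $C^0$ estimate with a bootstrap on derivatives. Fix a global orthonormal adapted basis $(X_i)$ along $\ga$ (available by the parallel transport construction of Section~\ref{sc:structure}) and write $Y_1=\sum_{i=1}^k g_iX_i$, $Y_v=\sum_{j=k+1}^n f_jX_j$, $Y_2=\sum_{r=k+1}^n z_rX_r$, with coordinate column vectors $G,F,Z$. The condition $D\mathcal{F}(0)(Y)=Y_2$ with $Y(a)=0$, by the same computation that led to \eqref{eq:compatibilitycond}, is equivalent to the Cauchy problem
\[
F'(t)=-B(t)F(t)-A(t)G(t)+Z(t),\qquad F(a)=0,
\]
where $A,B$ are the matrices in \eqref{def:AB}, continuous on $[a,b]$.

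For the base estimate, I would use Lemma~\ref{lm:det} to produce a regular matrix $D(t)$ with $D'=DB$ and $D(a)=I$. Multiplying the ODE by $D$ gives $(DF)'=D(Z-AG)$, so
\[
F(t)=D(t)^{-1}\!\int_a^t D(\tau)\bigl(Z(\tau)-A(\tau)G(\tau)\bigr)\,d\tau.
\]
Since $A,B,D,D^{-1}$ are continuous on the compact interval $[a,b]$, this integral formula immediately yields a constant $C_0$ with $\|F\|_0\le C_0(\|G\|_0+\|Z\|_0)$. For the bootstrap, differentiate the ODE $j-1$ times (for $1\le j\le r$) using Leibniz:
\[
F^{(j)}=-\sum_{k=0}^{j-1}\binom{j-1}{k}\bigl(B^{(k)}F^{(j-1-k)}+A^{(k)}G^{(j-1-k)}\bigr)+Z^{(j-1)}.
\]
Because $A$ and $B$ are at least $C^{r-1}$ on $[a,b]$ (as $\ga$ is $C^r$ and the basis is smooth), induction on $j$ yields constants $C_j$ with
\[
\|F^{(j)}\|_0\le C_j\bigl(\|F\|_{j-1}+\|G\|_{j-1}+\|Z\|_{j-1}\bigr),
\]
and combining with the base case produces a constant $K'$ such that $\|F\|_r\le K'(\|G\|_{r-1}+\|Z\|_{r-1})$.

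To finish, the correspondence $Y_v\leftrightarrow F$, $Y_1\leftrightarrow G$, $Y_2\leftrightarrow Z$ is given by a fixed smooth orthonormal frame on the compact interval $[a,b]$, so the $C^r$ norm of $Y_v$ and the $C^{r-1}$ norms of $Y_1,Y_2$ are equivalent to the corresponding norms of their coordinate vectors. This translates the ODE bound into the claimed inequality $\|Y_v\|_r\le K(\|Y_1\|_{r-1}+\|Y_2\|_{r-1})$. The argument is essentially a routine linear ODE estimate and I do not anticipate a substantial obstacle; the only point demanding mild care is ensuring that $A,B$ are of class $C^{r-1}$ so that the $r$-fold differentiation of the ODE is legal, which is automatic from the smoothness of the frame and of $\ga$.
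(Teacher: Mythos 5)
Your proof is correct and follows essentially the same route as the paper: reduce the hypothesis to the linear Cauchy problem $F'=-BF-AG+Z$, $F(a)=0$, in an adapted orthonormal frame, establish a $C^0$ bound, and bootstrap to $C^r$ by differentiating the ODE with Leibniz and inducting. The only (harmless) deviation is that you obtain the base $C^0$ estimate from the explicit integrating-factor formula with the matrix $D(t)$ of Lemma~\ref{lm:det}, whereas the paper's Lemma~\ref{lm:ODE ineq} invokes a Gronwall-type inequality from Hartman; both yield the same bound.
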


\begin{proof}
Reasoning as in Lemma \ref{lem:existY_hY_v}   we choose a global orthonormal adapted basis $(X_i)$ on $\ga$ and write 
\[
Y_1=\sum_{i=1}^k g_i X_i, \quad Y_2=\sum_{r=k+1}^n z_r X_r \quad \text{and} \quad Y_v=\sum_{r=k+1}^n f_r X_r.
\]
Then $Y_v$  is a solution of the ODE $\eqref{eq:compatibilitycond}$ given  by
\begin{equation}
\label{eq:ODEprop}
 F'=-B(t)F+ Z(t)-\ A(t) G(t)
\end{equation}
where $B(t), A(t)$ are defined in \eqref{def:AB}, $F$, $G$ are defined in \eqref{def:FG} and we set 
\[
Z=\begin{pmatrix} z_{k+1} \\ \vdots \\ z_n \end{pmatrix}.
\]
Since $Y_v(a)=0$ an $Y_v$ solves \eqref{eq:ODEprop} in $(a,b)$, by Lemma \ref{lm:ODE ineq} there exists a constant $K$ such that 
\begin{equation}
\begin{aligned}
 \| Y_v \|_{C^r([a,b])}=\| F \|_{C^r([a,b])} &\le K \|  Z(t)- A(t) \ G(t)  \|_{C^{r-1}([a,b])}\\
               &\le K' (\|  Y_2 \|_{C^{r-1}([a,b])}+ \| Y_1  \|_{C^{r-1}([a,b])}).
\end{aligned}
\end{equation}
where $K'=K\max\{1, \sup_{[a,b]}\| A(t)\|_{r-1}\}$.
\end{proof}

\begin{lemma}
\label{lm:ODE ineq}
Let $r\geqslant1$ be a natural number. Let $u:[a,b] \to \rr^d$ be the solution  of the inhomogeneous problem 
\begin{equation}
  \begin{cases}
  u'= A(t) u+ c(t),\\
  u(a)=u_0
  \end{cases}
  \label{eq:inhsystem}
\end{equation}
where $A(t)$ is a  $d\times d $ matrix in $C^{r-1}$ and $c(t)$ a $C^{r-1}$ vector field. Then,  there exists a constant $K$ such that 
 \begin{equation}
  \| u \|_r \leqslant K (\| c \|_{r-1}+ |u_0|).
  \label{dis:lemma ineq}
 \end{equation} 
\end{lemma}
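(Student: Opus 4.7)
The statement is a standard linear-ODE regularity estimate, and the natural approach is to prove it by induction on $r$, using the equation itself to bootstrap derivatives. Since $A(t)$ and the domain $[a,b]$ are fixed, every norm of $A$ appearing in the argument is an absolute constant and can be absorbed into the final $K$.

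\emph{Base case $r=1$.} First I would produce the $C^0$ estimate from the variation-of-constants formula
\[
u(t)=\Phi(t)u_0+\Phi(t)\int_a^t\Phi(s)^{-1}c(s)\,ds,
\]
where $\Phi$ is the fundamental matrix of $u'=A(t)u$. Since $\Phi,\Phi^{-1}$ are bounded on the compact interval $[a,b]$, this immediately yields $\|u\|_\infty\leq K_0(|u_0|+\|c\|_\infty)$; equivalently, one may invoke Gr\"onwall's inequality applied to $|u(t)|\leq|u_0|+\int_a^t(\|A\|_\infty|u|+|c|)\,ds$. The $C^1$ bound then follows from reading the equation pointwise: $\|u'\|_\infty\leq\|A\|_\infty\|u\|_\infty+\|c\|_\infty$, so $\|u\|_1\leq K_1(|u_0|+\|c\|_0)$.

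\emph{Inductive step.} Assume the estimate holds at level $r-1$, i.e.\ $\|u\|_{r-1}\leq K_{r-1}(|u_0|+\|c\|_{r-2})$, and that $A\in C^{r-1}$, $c\in C^{r-1}$. Differentiating the ODE $(r-1)$ times and applying the Leibniz rule,
\[
u^{(r)}=\sum_{j=0}^{r-1}\binom{r-1}{j}A^{(j)}u^{(r-1-j)}+c^{(r-1)}.
\]
Taking sup norms,
\[
\|u^{(r)}\|_\infty\leq C(\|A\|_{r-1})\,\|u\|_{r-1}+\|c\|_{r-1},
\]
which by the inductive hypothesis is at most $K(|u_0|+\|c\|_{r-1})$ for a new constant $K$. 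Combining this with the already-established bound on $\|u\|_{r-1}$ yields $\|u\|_r\leq K(|u_0|+\|c\|_{r-1})$, completing the induction.

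\emph{Main obstacle.} There is no real obstacle here: the only thing to keep track of is that at the inductive step we need $A\in C^{r-1}$ in order to apply Leibniz up to order $r-1$, which is precisely the hypothesis. The entire argument is a routine bootstrap; the key idea is simply that the equation $u'=Au+c$ trades one order of regularity of $c$ (and of $A$) for one additional derivative of $u$.
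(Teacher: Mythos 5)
Your proof is correct and follows essentially the same route as the paper: a Gr\"onwall/variation-of-constants bound for the $C^0$ estimate in the base case $r=1$, reading $u'$ off the equation for the $C^1$ bound, and then an induction that differentiates the ODE via the Leibniz rule to bootstrap higher derivatives. The only cosmetic difference is that the paper cites Hartman's lemma for the base-case sup bound and omits the binomial coefficients in the Leibniz expansion, which you correctly include.
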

\begin{proof}
The proof is by induction. We start from the case $r=1$. By \cite[Lemma 4.1] {Hartman} it follows 
\[
 u(t)\leqslant \left(|u_0|+ \int_{a}^{t} |c(s)| ds \right) e^{|\int_{a}^{t} \|A(s) \| ds |},
\]
where the norm of $A$ is given by $\sup_{|x|=1}|A \ x|$.
Therefore we have 
\begin{equation}
\label{dis:u}
 \sup_{t \in [a,b]} |u(t)| \le C_1  (\sup_{t \in [a,b]} |c(t)|+|u_0|) ,
\end{equation}
where we set
\[
 C_1=(b-a) e^{(b-a) \sup_{t \in [a,b]} \|A(t) \| }.
\]
Since $u$ is a solution of \eqref{eq:inhsystem} it follows 
\begin{equation}
\begin{aligned}
 \sup_{t \in [a,b]} |u'(t)|&\le \sup_{t \in [a,b]}  \|A(t) \| \  \sup_{t \in [a,b]} |u(t)|+  \sup_{t \in [a,b]} |c(t)|\\
 &\le ( C_2 + 1) \sup_{t \in [a,b]} |c(t)|.\\
\end{aligned}
\label{dis:u'}
\end{equation}
Hence by \eqref{dis:u} and \eqref{dis:u'} we obtain 
\[
 \| u \|_1 \le K (\| c \|_0 +|u_0|).
\]
Assume that \eqref{dis:lemma ineq} holds for $1\le k\le r$, then by \eqref{eq:inhsystem} we have
\[
 u^{(r+1)}(t)=\sum_{k=0}^r A^{(k)}(t)  \ u^{(r-k)}(t)+ c^{(r)}(t).
\]
By the inductive assumption we deduce
\begin{equation}
\begin{aligned}
 \sup_{t \in [a,b]} |u^{(r+1)}(t)|&\le  \sum_{k=0}^r  \sup_{t \in [a,b]}  \|A^{(k)}(t) \| \  \sup_{t \in [a,b]} | u^{(r-k)}(t)|+  \sup_{t \in [a,b]} |c^{(r)}(t)|\\
 &\le ( C_3 + 1) ( \sup_{t \in [a,b]} |c^{(r)}(t)| +|u_0|).\\
\end{aligned}
\label{dis:ur}
\end{equation}
Hence the inequality \eqref{dis:lemma ineq} for $r+1$ simply follows by \eqref{dis:ur}.
\end{proof}

Finally, we use the previous constructions to give a criterion for the integrability of admissible vector fields along a horizontal curve.

\begin{theorem}
\label{thm:intcriterion}
Let $\ga:I\to N$ be a horizontal curve in a Carnot manifold $(N,\mathcal{H})$ endowed with a Riemannian metric. Assume that $\ga$ is regular in the interval $[a,b]\subset I$. Then every admissible vector field with compact support in $(a,b)$ is integrable.
\end{theorem}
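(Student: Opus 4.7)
The plan is to mimic the Implicit Function Theorem argument from Theorem~\ref{thm:intcriterion2}, but to replace the pointwise solvability of the admissibility system (afforded there by strong regularity) with the global surjectivity of the holonomy map $H_\ga^{a,b}$. Because $H_\ga^{a,b}$ is a continuous linear surjection from $\mathcal{H}_0^{r-1}((a,b))$ onto the finite-dimensional space $\mathcal{V}_{\ga(b)}$, it admits a bounded linear right inverse $S\colon\mathcal{V}_{\ga(b)}\to \mathcal{H}_0^{r-1}((a,b))$. The idea is to run the IFT on a Banach space enlarged by the finite-dimensional factor $\mathcal{V}_{\ga(b)}$, whose role is to absorb the endpoint drift produced by the vertical correction.

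Concretely, given an admissible vector field $V$ with compact support in $(a,b)$, I would consider the map
\[
\Phi\colon(-\eps,\eps)\times \mathcal{V}_{\ga(b)}\times \mathcal{V}_\ga^r(a)\longrightarrow \mathcal{V}^{r-1}\times \mathcal{V}_{\ga(b)},\qquad \Phi(s,v,K):=\big(\mathcal{F}(sV+S(v)+K),\,K(b)\big).
\]
Horizontality of $\ga$ gives $\Phi(0,0,0)=(0,0)$, while admissibility of $V$ yields $\partial_s\Phi(0,0,0)=(D\mathcal{F}(0)V,0)=(0,0)$, and
\[
D_{(v,K)}\Phi(0,0,0)(v_1,K_1)=\big(D\mathcal{F}(0)(S(v_1)+K_1),\,K_1(b)\big).
\]
The main analytic step is to show that this partial differential is a topological isomorphism. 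Passing to coordinates in a global orthonormal adapted frame along $\ga$, the equation $D\mathcal{F}(0)(S(v_1)+K_1)=Z$ reads $F_1'+BF_1+AG_{v_1}=Z_{\mathrm{coords}}$; supplemented with $F_1(a)=0$ and $F_1(b)=z_{\mathrm{coords}}$ it would be overdetermined for $K_1$ alone, but becomes uniquely solvable once $v_1$ is free. Indeed, Proposition~\ref{prop:inthol} together with $H_\ga^{a,b}\circ S=\mathrm{Id}$ yields
\[
v_1=z+D(b)^{-1}\!\!\int_a^b D(\tau)Z_{\mathrm{coords}}(\tau)\,d\tau,
\]
after which $F_1$ is recovered by a standard integration. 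Continuity of the inverse comes from the linear ODE estimate of Lemma~\ref{lm:ODE ineq} combined with boundedness of $S$ and finite-dimensionality of $\mathcal{V}_{\ga(b)}$.

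The Implicit Function Theorem then produces unique smooth curves $v(s)$ and $K(s)$, defined for small $|s|$, with $v(0)=0$, $K(0)=0$ and $\Phi(s,v(s),K(s))=(0,0)$. Setting
\[
\Gamma_s(t):=\exp_{\ga(t)}\!\big(sV(t)+S(v(s))(t)+K(s)(t)\big)
\]
on $[a,b]$ and extending by $\Gamma_s(t):=\ga(t)$ outside, one obtains a horizontal deformation of $\ga$. The endpoints are fixed since $V$ and $S(v(s))$ vanish at $a$ and $b$ while $K(s)(a)=0$ and $K(s)(b)=0$ (the latter being the content of the second component of $\Phi=0$); the extension is smooth because $K(s)$ in fact vanishes identically on fixed collar neighborhoods of $\{a,b\}$, by uniqueness in the admissibility ODE with zero input there. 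Finally, differentiating the IFT identity at $s=0$, the $\partial_s$ contribution vanishes by admissibility of $V$ and injectivity of $D_{(v,K)}\Phi(0,0,0)$ forces $v'(0)=0$ and $\partial_sK|_{s=0}=0$, so the variational vector field of $\Gamma_s$ at $s=0$ equals $V$.

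The principal obstacle will be the verification that $D_{(v,K)}\Phi(0,0,0)$ is a topological isomorphism: uniqueness depends essentially on $H_\ga^{a,b}\circ S=\mathrm{Id}$, which is precisely where the regularity hypothesis on $\ga$ enters, whereas bi-continuity relies on uniform linear ODE estimates of the type in Lemma~\ref{lm:ODE ineq}. A secondary technical issue is the smoothness of $\Gamma_s$ at the boundary of $[a,b]$, which is resolved a posteriori by using that $V$ and $S(v(s))$ have compact support strictly inside $(a,b)$.
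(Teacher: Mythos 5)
Your proposal is correct, and it runs on the same engine as the paper's Appendix proof --- the Implicit Function Theorem in Banach spaces, with regularity entering through a finite-dimensional family of horizontal controls whose holonomy images span $\mathcal{V}_{\ga(b)}$ --- but it packages the argument differently. The paper applies the IFT twice: first to $\tilde{\mathcal{G}}(s,(s_i),Y_1,Y_2)=(Y_1,\mathcal{F}(sV+\sum_i s_iV^i+Y_1+Y_2))$, which produces horizontal deformations for every value of the auxiliary parameters $(s_i)$ but lets the endpoint at $b$ drift, and then a second, finite-dimensional, time to the endpoint map $\Pi(s,(s_i))=\Gamma(\cdots)(b)$ in order to choose $s_i(s)$ pinning $\Gamma_s(b)=\ga(b)$; your horizontal fields $S(e_j)$, completed by their vertical ODE corrections, are exactly the paper's admissible fields $V^j$ with $V^j_v(b)$ spanning $\mathcal{H}^\perp_{\ga(b)}$. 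You instead build the endpoint constraint into the map as the second component $K(b)$ and enlarge the domain by the finite-dimensional factor $\mathcal{V}_{\ga(b)}$ via the right inverse $S$, so a single IFT application does both jobs; injectivity of $D_{(v,K)}\Phi(0,0,0)$ is precisely $H_\ga^{a,b}\circ S=\mathrm{Id}$ together with uniqueness for the homogeneous linear ODE, as you say. What this buys is a cleaner argument: in particular you bypass the paper's somewhat delicate assertion that the image of $\Pi$ is an $(n-k)$-dimensional submanifold of $N$ tangent to $\mathcal{H}^\perp_{\ga(b)}$, which is what legitimizes its second IFT step. Two small points to repair in a write-up: with the conventions $F_1'+BF_1+AG_{v_1}=Z_{\mathrm{coords}}$ and $(DF_1)'=-DAG_{v_1}+DZ_{\mathrm{coords}}$, the correct formula is $v_1=z-D(b)^{-1}\int_a^b D(\tau)Z_{\mathrm{coords}}(\tau)\,d\tau$ (your sign is off); and the vanishing of $K(s)$ on collars of $a$ and $b$ is a uniqueness statement for the \emph{nonlinear} horizontality ODE satisfied by the vertical coordinates of $K(s)$ when the horizontal input is zero --- forward uniqueness from $K(s)(a)=0$ and backward uniqueness from $K(s)(b)=0$ --- not for the linearized admissibility system; the conclusion stands, but the justification should be phrased for the nonlinear equation.
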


\begin{proof}
Let us take $V,V^1,\ldots,V^{n-k}$ vector fields along $\ga$ vanishing at $a$. We consider the map
\[
\tilde{\mathcal{G}}:\big[(-\eps,\eps)\times (-\eps,\eps)^{n-k}\big]\times \big[\mathcal{H}_\ga^{r-1}(a)\times \mathcal{V}_\ga^r(a)\big]\to \mathcal{H}_\ga^{r-1}(a)\times \mathcal{V}_\ga^{r-1}(a),
\]
given by
\[
\tilde{\mathcal{G}}((s,(s_i),Y_1,Y_2))=(Y_1,F(sV+\sum_{i=1}^{n-k}s_iV^i+Y_1+Y_2)).
\]
The map $\tilde{\mathcal{G}}$ is continuous with respect to the product norms (on each factor we put the natural norm, the Euclidean one on the intervals and $||\cdot||_r$ and $||\cdot||_{r-1}$ in the spaces of vectors along $\ga$). Moreover
\[
\tilde{\mathcal{G}}(0,0,0,0)=(0,0),
\]
since the curve $\ga$ is horizontal. Now we have that
\[
D_2\tilde{\mathcal{G}}(0,0,0,0)(Y_1,Y_2)=D\mathcal{G}(0,0)(Y_1,Y_2)
\]
is a linear isomorphism. We can apply the Implicit Function Theorem to obtain maps
\[
Y_1:(-\eps,\eps)^{n-k+1}\to \mathcal{H}_\ga^{r-1}(a), \quad Y_2:(-\eps,\eps)^{n-k+1}\to \mathcal{V}_\ga^r(a),
\]
such that $\tilde{\mathcal{G}}(s,(s_i),(Y_1)(s,s_i),(Y_2)(s,s_i))=(0,0)$. This implies that $(Y_1)(s,(s_i))=0$ and that
\[
F(sV+\sum_i s_iV^i+Y_2(s,s_i))=0.
\]
Hence the curves
\[
\Gamma(sV+\sum_i s_iV^i+Y_2(s,s_i))
\]
are horizontal.

Now we assume that $V$ is an admissible vector field with $V(a)=V(b)=0$, and that $V^1,\ldots,V^{n-k}$ are admissible vector fields vanishing at $a$. Then the vector field
\[
\frac{\ptl Y_1}{\ptl s}(0,0),\frac{\ptl Y_1}{\ptl s_i}(0,0)
\]
along $\gamma$ are vertical and admissible. Since they vanish at $a$, they are identically $0$.

If, in addition, $V^1_v(b),\ldots,V^{n-k}_v(b)$ generate the space $\mathcal{H}^\perp(b)$. We consider the map
\[
\Pi:(-\eps,\eps)^{n-k+1}\to N
\]
given by
\[
(s,(s_i))\mapsto \Gamma(sV+\sum_i s_iV^i+Y_2(s,s_i))(b).
\]
For $s$, $(s_i)$ small, the image of this map is an $(n-k)$-dimensional submanifold $S$ of $N$ with tangent space at $\ga(b)$ given by $\mathcal{H}^\perp_{\ga(b)}$ (as $V(b)=0$ and $V^i(b)=V^i_v(b)$ generate $\mathcal{H}^\perp_{\ga(b)}$).
Notice that 
\[
 \dfrac{\partial \Pi (0,0)}{\partial s_i}=V^i(b)=V_v^i(b),
\]
which is invertible and 
\[
 \dfrac{\partial \Pi (0,0)}{\partial s}=V(b)=0.
\]
Hence we can apply the Implicit Function Theorem to conclude that there exists a family of smooth functions $s_i(s)$ so that
\[
\Gamma(sV+\sum_i s_i(s) V^i+Y_2(s,s_i(s)))
\]
are horizontal and take the value $\ga(b)$ at $b$. Clearly, we have
\[
 \Pi(s,(s_i(s)))=\gamma(b).
\]
Differentiating with respect to $s$ at $s=0$ we obtain 
\[
\dfrac{\partial \Pi (0,0)}{\partial s} + \sum_i \dfrac{\partial \Pi (0,0)}{\partial s_i} s_i'(0)=0.
\]
Therefore $s_i'(0)=0$ for each $i=1, \ldots, n-k$. Thus, the variational vector field to $\Gamma$ is 
\begin{equation}
\dfrac{\Gamma(s)}{\partial s}\bigg|_{s=0}=V+\sum_{i} s_i'(0) V^i+ \frac{\ptl Y_2}{\ptl s}(0,0)+\sum_{i} \frac{\ptl Y_2}{\ptl s_i}(0,0)=V.\qedhere
\end{equation}
\end{proof}

\section{The holonomy map on the space of square integrable functions}
\label{ap:holonomyL2}
In \cite[Section 3.8]{Montgomery} R. Montgomery stressed the fact the $C^1$ topology is not the correct one for calculus of variations. Indeed, a rigid curve in the $C^1$ topology is always a local minimum (see \cite{Sussman}) for each functional, in particular for the length functional, since its minimality does not depend on the functional but only on its domain. Therefore R. Montgomery suggested to consider the $W^{1,2}$ topology for curves instead of the $C^1$ topology, introducing the endpoint map described in  \cite[Chapter 5]{Montgomery}. Here we show how we can take into account this weakening of the regularity for the holonomy map.

Let $I \subset \rr$ be an open interval. Let  $\ga:I\to N$ be an absolutely continuous curve of degree $d=\deg(\ga)$ with square integrable derivative
\[
\ga'(t)=\sum_{\ell=1}^k u_{\ell} (t) (X_{\ell})_{\ga(t)},
\]
where $u_{\ell} \in L_{loc}^2(I,\rr)$ for each $\ell=1,\ldots,k$ letting $\mh:=\mh^d$, $k=n_d$, $((X_1)_{\ga(t)},$ $ \ldots, (X_k)_{\ga(t)})$ is an horizontal frame along $\ga$  for $\mh$ and $((X_{k+1})_{\ga(t)}, \ldots, (X_n)_{\ga(t)})$ is a vertical frame for $\mathcal{V}=(\mh)^{\perp}$  along $\ga$, both of them provided by Remark \ref{rk:horizontalconnection}. Let  $[a,b]\subset I$. A square integrable vector field $V \in L^2([a,b], TN)$ can be projected into its horizontal part $V_h \in L^2([a,b], \mh)$ given by 
\[
V_h=\sum_{i=1}^k g_i(t) (X_i)_{\ga(t)} \quad \text{where} \quad g_i \in L^2([a,b])
\]
and its vertical part $V_v \in L^2([a,b], \mathcal{V}) $ given by
\[
V_v=\sum_{r=k+1}^n f_r(t) (X_r)_{\ga(t)} \quad \text{where} \quad f_r \in L^2([a,b]).
\]
Thus, the admissibility system  \eqref{eq:compatibilitycond} is now equivalent to 
\begin{equation}
\label{eq:distcompatibilitycond}
f_r'+\sum_{i=1}^k a_{ri}g_i+\sum_{j=k+1}^n b_{rj} f_j= 0,\quad r=k+1,\ldots,n,
\end{equation}
where we consider distributional derivatives and the coefficients 
\begin{equation}
\label{def:abl2}
a_{ri}=\sum_{\ell=1}^k u_{\ell}(t) c_{\ell i}^r( \ga), \quad \quad b_{rj}=\sum_{\ell=1}^k u_{\ell}(t) c_{\ell j}^r(\ga)
\end{equation}
belong to $L^2([a,b])$, since by assumption $u_{\ell} \in L^2([a,b])$.
Since any linear system satisfies the Carathéodory hypothesis \cite[eq. (5.2), Section I.5]{Hale80} and  the Lipschitz condition in the spatial variable \cite[eq. (5.3), Section I.5]{Hale80}, we have that the system \eqref{eq:distcompatibilitycond} admits a unique absolutely continuous solution by \cite[Theorem 5.3, Section I.5]{Hale80}.
Hence this Carathéodory's existence theorem allows us to define a holonomy  type map
\[
\tilde{H}_\gamma^{a,b}: L^2([a,b], \mh)\to \mathcal{V}_{\ga(b)}
\]
where $\mathcal{V}_{\ga(b)}$ is the vector space of vertical vectors at the point $\ga(b)$. In order to define $\tilde{H}_\gamma^{a,b}$ we consider a horizontal vector $V_h\in L^2([a,b], \mh)$ and we take the only vector field $V_v\in W^{1,2}([a,b], \mathcal{V})$ solution of \eqref{eq:distcompatibilitycond} with initial condition $V_v(a)=0$, thanks to \cite[Theorem 5.3, Section I.5]{Hale80}.  By the Sobolev Embedding Theorem \cite[Corollary 7.11]{GT01}  the space $W^{1,2}([a,b], \mathcal{V})$ is continuously embedded in $C^{\frac{1}{2}}([a,b], \mathcal{V})$. Thus, we consider the $\frac{1}{2}$-H\"older function, denoting it always as $V_v$, in the class of functions of $V_v \in W^{1,2}([a,b], \mathcal{V})$   so that  we define
\[
\tilde{H}_\gamma^{a,b}(V_h)=V_v(b).
\]
\begin{definition}
In the above conditions, we say that $\gamma$ restricted to $[a,b]$ is \emph{regular} if the holonomy map $\tilde{H}_\gamma^{a,b}$ is surjective.
\end{definition}

Defining $F$ and $G$ as in \eqref{def:FG}, the system \eqref{eq:distcompatibilitycond} is equivalent to $F'=-BF-AG$, where $A$, $B$ are the  $L^2$ matrices defined in \eqref{def:abl2} and the time derivative shall be understood in the distributional sense. In these conditions, the coordinates of $\tilde{H}_\ga^{a,b}(V_h)=V_v(b)$ in the basis  $(X_i)$ are given by $F(b)$. 

The following result allows the integration of the differential equation \eqref{eq:distcompatibilitycond} to explicitly compute the holonomy map. 

\begin{proposition}
\label{prop:intholl2}
In the above conditions, there exists a square regular matrix $D(t)$ of order $(n-k)$ with coefficient in $C^{\frac{1}{2}}([a,b])$ such that
\begin{equation}
\label{eq:F(b)l2}
F(b)=- D(b)^{-1}\int_a^b (D A)(t) G(t) \, dt.
\end{equation}
\end{proposition}

\begin{proof}
Lemma~\ref{lm:detl2} below allows us to find a regular matrix $D(t)$ with coefficient in $C^{\frac{1}{2}}([a,b])$ such that $D'= DB$. Then equation $F'=-BF-AG$ is equivalent to $(DF)'=- DAG$. Since $DF$ belongs to $W^{1,2}([a,b], \mathcal{V})$ and the fundamental theorem of calculus still holds in $W^{1,2}$  we have 
\[
D(b)F(b)-D(a)F(a)=-\int_a^b (D A)(t) G(t) \, dt
\]
Taking into account that $F(a)=0$, and multiplying by $D(b)^{-1}$, we obtain \eqref{eq:F(b)l2}.
\end{proof}

\begin{lemma}
\label{lm:detl2}
Let $B(t)$ be a $L^2$ family of square matrices on the interval $[a,b]$. Let $D(t)$ be the $C^{\frac{1}{2}}$ solution of the Cauchy problem
\begin{equation}
\label{eq:omol2}
D'(t)=D(t) B(t)\ \text{on }[a,b],  \quad D(a)=I_d.
\end{equation}
Then $\det  D(t)\ne0$ for each $t \in [a,b]$.
\end{lemma}
\begin{proof}
Thanks to \cite[Theorem 5.3, Section I.5]{Hale80} the Cauchy problem \eqref{eq:omol2} has a unique solution $D \in W^{1,2}([a,b],\mathbb{M}^{d\times d})$, that belongs to $C^{\frac{1}{2}}([a,b],\mathbb{M}^{d\times d})$ by the Sobolev Embedding Theorem. 
Since the determinant is a polynomial function, therefore $C^1$, we  apply the chain rule in Sobolev spaces to gain  the Jacobi formula
\[
\dfrac{d (\det D(t))}{dt}=\text{Tr}\left(\text{adj}\, D(t)\, \dfrac{d D(t)}{dt}\right)
\]
in the distributional sense, where $\text{adj} D$ is the classical adjoint (the transpose of the cofactor matrix) of $D$ and $\text{Tr}$ is the trace operator. Therefore
\begin{equation}
\label{eq:detode}
\dfrac{d \det(D(t))}{dt}=\text{Tr}\left((\text{adj}\, D(t)) D(t) B(t) \right)=\det D(t)\, \text{Tr}(B(t)).
\end{equation}
Since $\det D(a)=1$, the solution for \eqref{eq:detode} is given by 
\[
 \det D(t)=e^{\int_a^t \text{Tr}(B(\tau)) \, d \tau}>0,
\]
for all $t\in [a,b]$. Thus, the matrix $D(t)$ is invertible for each $t \in [a,b]$.
\end{proof}

\begin{theorem}
\label{th:singcharcl2}
The absolutely continuous curve $\ga$ of degree $d=\deg(\ga)$, with square integrable derivative, is non-regular restricted to $[a,b]$  if and only if there exists a $C^{\frac{1}{2}}$ row vector field $\Lambda(t)\ne0$  for all $t \in [a,b]$  that solves the following system
\begin{equation}
 \begin{cases}
 \Lambda'(t)= \Lambda(t) B(t)\\
 \Lambda(t) A(t)=0,
 \end{cases}
 \label{eq:singularsyscl2}
\end{equation}
for a.e. $t \in [a,b]$.
\end{theorem}
\begin{proof}
 
Assume that $\ga$ is nonregular in $[a,b]$, then the image of the holonomy map is contained in a proper subspace of $\mathcal{V}_{\ga(b)}$. Therefore there exists a row vector $\Gamma \ne 0$ such that 
\begin{equation}
\label{eq:gddag}
 \Gamma F(b)=- \int_a^b \Gamma D(b)^{-1} D(t) A(t) G(t)=0
\end{equation}
for all $G\in L^2([a,b],\mh)$, where $D(t)$ solves 
\begin{equation}
\label{eq:homDcl2}
\begin{cases}
D(t)'= D(t) B(t)\\
D(a)=I_{n-k}.
\end{cases}
\end{equation}
In the previous computation we used the integral formula provided by Proposition~\ref{prop:intholl2}.
Setting $\Lambda(t):= \Gamma D(b)^{-1} D(t)$ by equation \eqref{eq:gddag} we obtain $\Lambda(t)A(t)=0$ for a.e. $t \in [a,b]$. 
Since $\Gamma$ is a constant vector and $D(t)$ is a regular  $C^{\frac{1}{2}}([a,b])\cap W^{1,2}([a,b])$ matrix by Lemma~\ref{lm:detl2},  we obtain $ \Lambda'(t)=\Lambda(t) B(t)$ a.e. in $[a,b]$ and  $\Lambda(t) \ne 0$ for all $t \in [a,b]$.

Conversely, any solution of the system \eqref{eq:singularsyscl2} is given by 
\[
\Lambda(t)= \Gamma D(t),
\]
where $\Gamma=\Lambda(0)\ne 0$ and $D(t)$ solves the equation \eqref{eq:homDcl2}.
Indeed, let us consider a general solution $\Lambda(t)$ of \eqref{eq:singularsyscl2}. If we set
\[
\Phi(t)=\Lambda(t)-\Gamma D(t),
\]
where $\Gamma=\Lambda(0)\ne 0$ and $D(t)$ solves the equation \eqref{eq:homDcl2}, then we deduce 
\[
\begin{cases}
\Phi(t)'=\Phi(t)B(t)\\
\Phi(0)=0.
\end{cases}
\]
Clearly the unique solution of this system is $\Phi(t)\equiv 0$.
Hence we  conclude that $\Gamma D(t) A(t)=0$ for a.e. $t$ in $[a,b]$. Furthermore by Proposition~\ref{prop:intholl2} we have that the image of the holonomy map is given by 
\[
F(b)=- D(b)^{-1}\int_a^b (D A)(t) G(t) \, dt
\]
for each $G \in L^2([a,b],\mh)$. Setting $\tilde{\Gamma}:=\Gamma D(b)$ we obtain 
\[
\tilde{\Gamma} F(b)=- \int_a^b \Gamma D(t) A(t) G(t) dt=0.
\]
Therefore the image of the holonomy map is contained in a proper subspace of $\mathcal{V}_{\ga(b)}$, thus the curve $\ga$ is non-regular restricted to $[a,b]$.
\end{proof}

\begin{remark} We notice that 
\begin{itemize}
\item as we pointed out in Remark~\ref{rk:Mchequalsing} when $(N,\mh)$ is a Carnot manifold the system  \eqref{eq:singularsyscl2} coincides with the characteristic system in  \cite[Lemma 5.2.3]{Montgomery} , but now for  absolutely continuous curves with square integrable derivatives and not only for $C^1$ curves;
\item we have the following inclusions for the holonomy map 
\[
\begin{tabular}{cccc}
$\tilde{H}_{\ga}^{a,b}:$ & $L^2([a,b],\mh)$& $\rightarrow$& $\mathcal{V}_{\ga(b)}$\\
&\rotatebox{90}{$\subset$}& &\\
$H_{\ga}^{a,b}: $ & $C_0((a,b),\mh)$ & $\rightarrow$ &$ \mathcal{V}_{\ga(b)}$\\
&\rotatebox{90}{$\subset$}& &\\
$H_{\ga}^{a,b}: $ & $C_0^1((a,b),\mh)$ & $\rightarrow$ &$ \mathcal{V}_{\ga(b)}$\\
&\rotatebox{90}{$\subset$}& &\\
 & $\vdots$ & & $\vdots$\\
 $H_{\ga}^{a,b}: $ & $C_0^r((a,b),\mh)$ & $\rightarrow$ &$ \mathcal{V}_{\ga(b)}$\\
 &\rotatebox{90}{$\subset$}& &\\
 & $\vdots$ & & $\vdots$\\
 $H_{\ga}^{a,b}: $ & $C_0^{\infty}((a,b),\mh)$ & $\rightarrow$ &$ \mathcal{V}_{\ga(b)}$,\\
\end{tabular}
\]
where the suitable control space depends on the regularity $(L^2, C^1, \ldots, C^{\infty} )$ of the immersed curve we consider. When the curve is $W^{1,2}$ the control space is $L^2([a,b],\mh)$ and when the curve is $C^{r}$ the control space for the holonomy map is $C_0^{r-1}((a,b),\mh)$ for $r \ge1$.
\end{itemize}
\end{remark}

\bibliography{degree}

\end{document}